\newtheoremstyle{theoremstyle}
  {10pt}      
  {5pt}       
  {\itshape}  
  {}          
  {\bfseries} 
  {:}         
  {.5em}      
  {}          
\newtheoremstyle{examplestyle}
  {10pt}      
  {5pt}       
  {}          
  {}          
  {\bfseries} 
  {:}         
  {.5em}      
  {}          
\theoremstyle{plain}
\newtheorem{theorem}{Theorem}[section]
\newtheorem*{theorem*}{Theorem}
\newtheorem{lemma}[theorem]{Lemma}
\newtheorem{proposition}[theorem]{Proposition}
\newtheorem*{proposition*}{Proposition}
\newtheorem{corollary}[theorem]{Corollary}
\newtheorem*{corollary*}{Corollary}
\theoremstyle{definition}
\newtheorem{definition}[theorem]{Definition}
\newtheorem{definition*}{Definition}
\theoremstyle{remark}
\newtheorem{remark}[theorem]{Remark}
\newtheorem{remark*}{Remark}
\newcommand{\caC}{{\mathcal C}}
\newcommand{\caD}{{\mathcal D}}
\newcommand{\caK}{{\mathcal K}}
\newcommand{\caM}{{\mathcal M}}
\newcommand{\caS}{{\mathcal S}}
\newcommand{\caO}{{\mathcal O}}
\newcommand{\MZ}{\mathsf{M}\mathbf{Z}}
\newcommand{\unit}{\mathbf{1}}
\newcommand{\N}{\mathbb{N}}
\newcommand{\Ho}{\mathbf{Ho}}
\newcommand{\Mod}{\mathrm{Mod}}
\newcommand{\Env}{\mathrm{Env}}
\newcommand{\bL}{\mathbf{L}}
\newcommand{\bR}{\mathbf{R}}
\newcommand{\Hom}{\mathrm{Hom}}
\newcommand{\map}{\mathrm{map}}
\newcommand{\hocolim}{\mathrm{hocolim}}
\newcommand{\Coll}{\mathsf{Coll}}
\newcommand{\Oper}{\mathsf{Oper}}
\newcommand{\Pairs}{\mathsf{Pairs}}
\newcommand{\cOper}{\mathscr{O}}
\newcommand{\cPairs}{\mathscr{P}}
\newcommand{\cA}{\mathscr{A}}
\newcommand{\cB}{\mathscr{B}}
\newcommand{\cM}{\mathscr{M}}
\newcommand{\cR}{\mathscr{R}}
\newcommand{\sSets}{s\mathsf{Sets}}
\newcommand{\Alg}{\mathsf{Alg}}
\newcommand{\Ass}{\mathit{\mathcal{A}ss}}
\newcommand{\Com}{\mathit{\mathcal{C}om}}
\newcommand{\modu}{\mathit{\mathcal{M}od}}
\newcommand{\Map}{\mathrm{Map}}
\newcommand{\End}{\mathrm{End}}
\date{}
\title{{\bf On functorial (co)localization of \\ algebras and modules over operads}}
\author{Javier J. Guti$\acute{\mathrm{e}}$rrez, Oliver R\"ondigs, Markus Spitzweck, Paul Arne {\O}stv{\ae}r}
\begin{document}
\maketitle
\begin{abstract}
Motivated by calculations of motivic homotopy groups, 
we give widely attained conditions under which operadic algebras and modules thereof are preserved under (co)localization functors. 
\end{abstract}
{\footnotesize{\tableofcontents}}
\newpage

\section{Introduction}
\label{section:introduction}

Operads are key mathematical devices for organizing hierarchies of higher homotopies in a variety of settings.
The earliest applications were concerned with iterated topological loop spaces.
More recent developments have involved derived categories, factorization homology, knot theory, moduli spaces, representation theory, string theory, deformation quantization, and many other topics.
This paper is a sequel to our work on operads in the context of the slice filtration in motivic homotopy theory \cite{GRSO}.

The problem we address here is that of preservation of algebras over colored operads, 
and also modules over such algebras, 
under Bousfield (co)localization functors.
For this  we only require a few widely attained technical assumptions and notions on the underlying model categories and the operads,
e.g., that of strongly admissible operads in a cofibrantly generated symmetric monoidal model category.
We refer to \cite{CaRaTo}, \cite{WY2}, and \cite{WY1} for related results on (co)localization of monadic algebras.

Our main motivation for studying the mentioned problem of preservation of algebras is rooted in Morel's $\pi_{1}$-conjecture \cite{oo}, \cite{roendigs-spitzweck-oestvaer.slices-sphere}. 
For a field $F$, 
this conjecture states there exists a short exact sequence of Nisnevich sheaves on the category of smooth $F$-schemes of finite type  
\begin{equation}
\label{eq:first-stable-stem} 
0 
\longrightarrow 
\mathbf{K}^{\mathsf{M}}_{2}/24 
\longrightarrow
\mathbf{\pi}_{1,0}\unit
\longrightarrow 
\mathbf{\pi}_{1,0}\mathbf{KQ}
\longrightarrow 
0.
\end{equation}
Here, $\unit$ is the motivic sphere spectrum, 
$\mathbf{K}^{\mathsf{M}}$ denotes Milnor $K$-theory, 
and $\mathbf{KQ}$ is the hermitian $K$-theory spectrum.  
The solution of Morel's $\pi_{1}$-conjecture \cite{roendigs-spitzweck-oestvaer.slices-sphere} involves an explicit calculation in the slice spectral sequence of the motivic sphere spectrum.  
One of the precursors for this calculation is the fact that the total slice functor takes $E_{\infty}$ motivic spectra, 
in particular the algebraic cobordism spectrum, 
to graded $E_{\infty}$ $\MZ$-algebras in a functorial way.  
Here, $\MZ$ denotes the motivic Eilenberg-MacLane spectrum.  
Theorems \ref{theorem:colocalcommute} and \ref{theorem:lkjoij} in this paper coupled with our construction of the slice filtration in \cite[\S6]{GRSO} 
verify the mentioned multiplicative property (which in turn is used in the proof of \cite[Theorem 2.20]{roendigs-spitzweck-oestvaer.slices-sphere}).  
We envision that future calculations with slice spectral sequences will exploit multiplicative structures to a greater extent, 
and as such will be relying on the results herein.

The paper starts with \S\ref{section:Modelstructuresofoperadsandalgebras} on model structures on operads and algebras.
Our main results on preservation of algebras and modules under Bousfield (co)localization functors are shown in \S\ref{(Co)localizationofalgebrasovercoloredoperads} and
\S\ref{section:proofsofthemainresults}.
To make the paper reasonably self-contained we have included two appendices fixing our conventions on model categories and colored operads.
In particular, we review tensor-closed sets of objects in a homotopy category, the Reedy model structure, operadic algebras, and modules over such algebras.

\section{Model structures of operads and algebras}
\label{section:Modelstructuresofoperadsandalgebras}
Let $\caC$ be a cocomplete closed symmetric monoidal category with tensor product $\otimes$, unit $I$, initial object $0$, and internal hom functor $\Hom(-,-)$.
For a set $C$ we refer to Appendix~\ref{jnhgd} for the definitions of $C$-colored collections and $C$-colored operads in~$\caC$.
Recall that a $C$-colored collection $\mathcal{K}$ is \emph{pointed} if it is equipped with \emph{unit} maps $I\to \mathcal{K}(c;c)$ for every $c\in C$.  
Denote by $\Coll_C(\caC)$ and $\Coll_C^{\bullet}(\caC)$ the categories of $C$-colored collections and pointed $C$\nobreakdash-colored collections, respectively. 
If $\mathcal{K}$ is a $C$-colored collection, we can define a pointed $C$-colored collection $F(\mathcal{K})$ by setting
$F(\mathcal{K})(c;c):=\mathcal{K}(c;c)\coprod I$ for every $c$ in $C$, and $F(\mathcal{K})(c_1,\ldots, c_n;c):=\mathcal{K}(c_1,\ldots, c_n; c)$ if $n\ne 1$. 
This defines the free-forgetful adjoint functor pair
$$
\xymatrix{
F\colon \Coll_C(\caC) \ar@<0.50ex>[r] & \ar@<0.50ex>[l] \Coll_C^{\bullet}(\caC)\colon U.
}
$$
We denote by $\Oper_C(\caC)$ and $\Oper(\caC)$ the categories of $C$-colored operads and (one-colored) operads in~$\caC$, respectively.

Suppose $\caC$ is a cofibrantly generated symmetric monoidal model category. 
Then $\Coll_C(\caC)$ and $\Coll_C^{\bullet}(\caC)$ have transferred model structures, where weak equivalences and fibrations are defined colorwise.
There is a free-forgetful adjoint pair
\begin{equation}
\label{free-forgetful}
\xymatrix{
F\colon \Coll_C^{\bullet}(\caC)\ar@<0.50ex>[r] & \ar@<0.50ex>[l] \Oper_C(\caC)\colon U.
}
\end{equation}
Under suitable conditions, 
the model structure on (pointed) $C$-colored collections can be transferred along \eqref{free-forgetful} to a cofibrantly generated model structure on $\Oper_C(\caC)$, 
in which a map of $C$\nobreakdash-colored operads is a fibration or a weak equivalence if its underlying (pointed) $C$\nobreakdash-colored collection is so. 
This holds for $k$-spaces, simplicial sets, and symmetric spectra;
see~\cite[Theorems 3.1, 3.2]{BM03}, \cite[Theorem 2.1, Example 1.5.6]{BM07} and \cite[Corollary 4.1]{GV12}.

In general, 
\eqref{free-forgetful} does not furnish a model structure on $\Oper_C(\caC)$, 
but rather the weaker structure of a \emph{semi model structure}.
In a semi model category the axioms of a model category hold with the exceptions of the lifting and factorization axioms, 
which hold only for maps with cofibrant domains. 
The trivial fibrations have the right lifting property with respect to cofibrant objects, since the initial object of a semi model category is assumed to be cofibrant.
For operads the following result is shown in~\cite[Theorem 3.2]{Spi} (cf.~\cite[Theorem 12.2A]{Fre09}).
Our extension to colored operads follows similarly.

\begin{theorem}
If $\caC$ is a cofibrantly generated symmetric monoidal model category, 
then the model structure on $\Coll^{\bullet}_C(\caC)$ transfers along the free-forgetful adjunction \eqref{free-forgetful} to a cofibrantly generated semi model structure on $\Oper_C(\caC)$, 
in which a map $\mathcal{O}\rightarrow\mathcal{O}'$ is a fibration or a weak equivalence if $\mathcal{O}(c_1,\ldots,c_n;c)\rightarrow \mathcal{O}'(c_1,\ldots, c_n; c)$ 
is a fibration or a weak equivalence in $\caC$, respectively, for every tuple of colors $(c_1,\ldots, c_n, c)$.
\end{theorem}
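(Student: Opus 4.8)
The plan is to apply Kan's transfer principle along the free--forgetful adjunction \eqref{free-forgetful}, but in the weakened form appropriate to semi model structures that is established in \cite[Theorem 3.2]{Spi}, and then to observe that every combinatorial ingredient survives the passage from one color to $C$ colors essentially verbatim. Write $I$ and $J$ for the generating cofibrations and generating trivial cofibrations of the transferred cofibrantly generated model structure on $\Coll^{\bullet}_C(\caC)$ (which exists by the recollections above, as $\caC$ is cofibrantly generated), and declare a map in $\Oper_C(\caC)$ to be a weak equivalence, resp.\ fibration, exactly when its underlying morphism of pointed collections is one; the candidate generating (trivial) cofibrations are then $FI$ and $FJ$. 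First I would record the formal prerequisites: $U$ creates limits and filtered colimits, and the free $C$-colored operad functor $F$ is given by a finitary monad (a colimit indexed by $C$-colored trees), so $\Oper_C(\caC)$ is bicomplete; the domains of $FI$ and $FJ$ are small relative to the relevant cell complexes because their images under $U$ are small in $\caC$ and $U$ preserves filtered colimits, so the small object argument is available for both sets. Two-out-of-three and the retract axiom for weak equivalences are inherited through $U$, and the initial object of $\Oper_C(\caC)$ is $F$ of the initial pointed collection, which is $FI$-cofibrant.

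The heart of the argument is the following pushout estimate, which I would prove next: if $\mathcal{O}\to\mathcal{O}'$ is a transfinite composite of pushouts of maps in $FJ$ and the underlying pointed collection of $\mathcal{O}$ is cofibrant, then $U(\mathcal{O}\to\mathcal{O}')$ is a colorwise trivial cofibration in $\caC$, hence in particular a colorwise weak equivalence. As usual it suffices to treat a single pushout $\mathcal{O}'=\mathcal{O}\amalg_{FK}FL$ along $F(u)$ for $u\colon K\to L$ a trivial cofibration of pointed collections, and then pass to transfinite composites. Here one uses the standard filtration $\mathcal{O}=P_0\to P_1\to\cdots$ with $\colim_n P_n=U\mathcal{O}'$, in which each $P_{n-1}\to P_n$ is a pushout of a map built from the $n$-fold pushout--product of $u$, induced up $\Sigma$-equivariantly from the groupoid of $C$-colored trees with $n$ internal vertices and tensored against the components of $U\mathcal{O}$. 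Because $U\mathcal{O}$ is cofibrant and $u$ is a trivial cofibration, the pushout--product axiom of $\caC$ --- applied with the care about the $\Sigma$-actions that is exactly where cofibrancy of the domain gets used --- makes each $P_{n-1}\to P_n$ a trivial cofibration in $\caC$ in every color, whence so is the colorwise composite.

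With the pushout estimate in hand I would assemble the semi model axioms. For a map with cofibrant domain, factor it via the small object argument on $FI$ to get a $\bigl(\text{cofibration},\ \text{trivial fibration}\bigr)$ factorization, and via the small object argument on $FJ$ to get a factorization whose left factor is an $FJ$-cell complex (hence a colorwise weak equivalence by the estimate) followed by an $FJ$-injective, i.e.\ a fibration by adjunction. The lifting axioms for maps with cofibrant domain then follow formally: a cofibration with cofibrant domain is a retract of an $FI$-cell complex and so lifts against trivial fibrations ($=FI$-injectives), while a trivial cofibration with cofibrant domain is, by the retract argument applied to its $(FJ\text{-cell},\ \text{fibration})$ factorization together with the estimate, a retract of an $FJ$-cell complex and so lifts against fibrations ($=FJ$-injectives). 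Finally, trivial fibrations lift against cofibrant objects because the initial operad is $FI$-cofibrant. The fibrations and weak equivalences are the stated colorwise ones by construction.

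The main obstacle is the pushout estimate of the second paragraph: controlling the free-operad filtration $P_0\to P_1\to\cdots$ and proving each stage is a trivial cofibration on underlying collections. The $\Sigma$-equivariance of the induced maps means the naive pushout--product bound only delivers a trivial cofibration when the relevant symmetric-group actions are suitably free, which is guaranteed precisely by the hypothesis that the domain operad has cofibrant underlying collection --- and this is exactly what confines the conclusion to a \emph{semi} model structure rather than a full one. Once this is settled, the colored case diverges from \cite[Theorem 3.2]{Spi} (cf.\ \cite[Theorem 12.2A]{Fre09}) only in bookkeeping: trees are decorated by tuples of colors drawn from $C$, and every colimit, pushout--product, and smallness argument is carried out color-tuple by color-tuple, which is why the proof "follows similarly."
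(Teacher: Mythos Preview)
Your proposal is correct and follows precisely the approach the paper indicates: the paper simply cites \cite[Theorem 3.2]{Spi} (cf.\ \cite[Theorem 12.2A]{Fre09}) for the one-color case and asserts that the extension to $C$-colored operads ``follows similarly,'' and you have accurately unpacked what that entails, including the filtration/pushout estimate and the bookkeeping passage to colored trees. One small point worth making explicit in your write-up is the bridge between ``cofibrant domain in $\Oper_C(\caC)$'' and ``underlying pointed collection cofibrant'': the same filtration argument (applied first to $FI$-cells) shows that $FI$-cell operads have cofibrant underlying collections, which is what licenses invoking your pushout estimate for maps with operad-cofibrant domain.
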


Throughout the paper we will implicitly assume that $\Oper_C(\caC)$ \emph{always} admits a cofibrantly generated transferred model structure,
where the weak equivalences and fibrations are defined at the level of the underlying collections. 

Let $\caC^C$ denote the product category $\prod_{c\in C}\caC$. 
If $\mathcal{O}$ is a $C$-colored operad, denote by $\Alg_{\mathcal{O}}(\caC)$ the category of $\mathcal{O}$-algebras in $\caC$; 
see Appendix~\ref{jnhgd}. There is a free-forgetful adjoint pair
\begin{equation}
\label{free-forget}
\xymatrix{
F_{\mathcal{O}}\colon \caC^C \ar@<0.50ex>[r] & \ar@<0.50ex>[l] \Alg_{\mathcal{O}}(\caC)\colon U_{\mathcal{O}},
}
\end{equation}
where the left adjoint is the free $\mathcal{O}$-algebra functor defined by 
$$
F_{\mathcal{O}}(\cA)(c)=\coprod_{n\ge 0}\left(\coprod_{c_1,\dots, c_n\in C}\mathcal{O}(c_1,\ldots, c_n;c)
\otimes_{\Sigma_n} \cA(c_1)\otimes\cdots\otimes \cA(c_n) \right).
$$
If it is clear from the context we shall write $F$ and $U$ instead of $F_\caO$ and $U_\caO$, respectively.

Let $\caC$ be a cofibrantly generated symmetric monoidal model category. 
Recall from \cite{BM03} that a $C$-colored operad $\mathcal{O}$ is \emph{admissible} if the product model structure on $\caC^C$ transfers to a cofibrantly generated model structure on 
$\Alg_{\mathcal{O}}(\caC)$ via \eqref{free-forget}.
An $\caO$-algebra $\cA$ is \emph{underlying cofibrant} if $U(\cA)$ is cofibrant in $\caC^C$; 
i.e., $\cA(c)$ is cofibrant in $\caC$ for all $c\in C$.

As indicated in~\cite[I.5]{Spi}, 
if $\caC$ is a simplicial symmetric monoidal model category and $\mathcal{O}$ is an admissible $C$-colored operad, 
then $\Alg_{\caO}(\caC)$ is naturally a simplicial model category. 
For a simplicial set $K$ and an $\caO$-algebra $\cA$, the cotensor $\cA^K$ is the object $(U_{\caO} \cA)^K$ with $\caO$-algebra structure given by the composition 
$\caO\to \End(\cA)\to\End(\cA^K)$ 
--- for the endomorphism colored operad --- 
induced by the diagonal map $K\to K\times\cdots\times K$. 
For $K$ fixed, the functor $(-)^K$ has a left adjoint defining the tensor.
For $\cA$ fixed, the functor $\cA^{(-)}$ has a right adjoint defining the simplicial enrichment in $\Alg_{\caO}(\caC)$.

\begin{definition}
\label{def:strongly_admissible}
Let $\caC$ be a cofibrantly generated symmetric monoidal model category. 
A $C$-colored operad $\caO$ in $\caC$ is \emph{strongly admissible} if there is a weak equivalence $\varphi\colon \caO' \to \caO$ of admissible $C$-colored operads 
inducing a Quillen equivalence
$$
\xymatrix{
\varphi_!\colon \Alg_{\caO'}(\caC) \ar@<0.50ex>[r] & \ar@<0.50ex>[l] \Alg_{\caO}(\caC)\colon \varphi^*
}
$$
and $\caO'$ satisfies one of the conditions:
\begin{itemize}
\item[{\rm (i)}] It has an underlying cofibrant $C$-colored collection.
\item[{\rm (ii)}] It has an underlying cofibrant pointed $C$-colored collection, 
and $\caC$ has an additional cofibrantly generated symmetric monoidal model structure with the same weak equivalences and cofibrant unit.
\end{itemize}
We call the triple $(\caO, \caO', \varphi)$ a \emph{strongly admissible pair}.
\label{def:strongly_admissible_pair}
\end{definition}

\begin{remark}
By \cite[Theorem 1]{Mur15} any combinatorial symmetric monoidal model category satisfying the very strong unit axiom admits a combinatorial symmetric monoidal 
model structure with the same weak equivalence and (possibly) more cofibrations making the unit cofibrant.
The \emph{very strong unit axiom} says that tensoring any object with a cofibrant approximation of the unit is a weak equivalence. 
This holds in many examples, 
e.g., when tensoring with cofibrant objects preserve weak equivalences~\cite[Corollary~9]{Mur15}.
\end{remark}

\begin{remark}
\label{rem:cofibrant_unit}
If $\caC$ is a simplicial symmetric monoidal model category, then the unit of $\caC$ is cofibrant:
For any monoidal Quillen adjunction $i\colon\sSets\rightleftarrows\caC\colon r$, $i$ preserves the unit and cofibrations.  
Thus if $\caO$ is a $C$-colored operad in $\caC$ with an underlying cofibrant pointed $C$-colored collection, 
$\caO$ has an underlying cofibrant $C$-colored collection.
\end{remark}

Let $\cA$ be a monoid in a closed symmetric monoidal category $\caC$. 
Define the operad $\caO_\cA$ by $\caO_\cA(n)=\cA$ if $n=1$ and zero otherwise. 
The algebras over $\caO_\cA$ in $\caC$ are precisely the $\cA$-modules. 
A map of monoids $\cA\to \cB$ induces a map of operads $\caO_\cA\to \caO_{\cB}$.

\begin{definition}
Let $\caC$ be a cofibrantly generated symmetric monoidal model category. 
A monoid $\cA$ in $\caC$ is \emph{strongly admissible} if there is another monoid $\cA'$ and a weak equivalence $\varphi\colon \cA'\to \cA$ such that $(\caO_\cA, \caO_{\cA'},\varphi)$ 
is a strongly admissible pair.
\end{definition}

The constant simplicial object functor sends an object $X$ to the simplicial object $X_{\bullet}$ with $X_n=X$ for all $n$. 
If $\caC$ is symmetric monoidal, this is a symmetric monoidal functor for the objectwise tensor product on $s\caC$. 
Thus, if $\caO$ is a $C$-colored operad in $\caC$, we can view it as a $C$-colored operad in the category of simplicial objects $s\caC$ by applying the constant functor levelwise.

\begin{lemma} 
\label{corollary:bgfrd}
Suppose $\caO$ is an admissible $C$\nobreakdash-colored operad in a simplicial symmetric monoidal model category $\caC$.
For every simplicial object $\cA_\bullet$ in $\Alg_{\caO}(\caC)$ there is a natural isomorphism
$$
|U(\cA_\bullet)|_{\caC^C} \cong U(|\cA_\bullet|_{\Alg_{\caO}(\caC)}),
$$
where $U$ and $|-|$ denote the corresponding forgetful and realization functor, respectively. 
\end{lemma}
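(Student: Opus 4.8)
The plan is to express the realization in $\Alg_{\caO}(\caC)$ as a coend, reduce the statement to the case of free $\caO$-algebras, and there reduce it to the behavior of the free-algebra monad on $\caC^{C}$. By construction $|\cA_{\bullet}|_{\Alg_{\caO}(\caC)}$ is the coend $\int^{[n]\in\Delta}\cA_{n}\otimes\Delta^{n}$ formed in the simplicial category $\Alg_{\caO}(\caC)$, and as a functor of $\cA_{\bullet}$ it is left adjoint to $\cA\mapsto(\,[n]\mapsto\cA^{\Delta^{n}}\,)$, hence preserves all colimits; likewise $|-|_{\caC^{C}}$ preserves all colimits, and it is computed factorwise in $\caC^{C}=\prod_{c\in C}\caC$. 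I would use two standard facts. First, $U_{\caO}$ preserves reflexive coequalizers: it is monadic with monad $T:=U_{\caO}F_{\caO}$, and $T$ is assembled from coproducts and from the functors $\cB\mapsto\caO(c_{1},\dots,c_{n};c)\otimes_{\Sigma_{n}}(\cB(c_{1})\otimes\dots\otimes\cB(c_{n}))$; coproducts, evaluations $\caC^{C}\to\caC$, and $\caO(c_{1},\dots,c_{n};c)\otimes_{\Sigma_{n}}(-)$ preserve all the relevant colimits, while the $n$-fold tensor $\caC^{n}\to\caC$ preserves reflexive coequalizers because $\otimes$ preserves colimits in each variable and reflexive coequalizers commute with finite products; hence $T$ preserves reflexive coequalizers, and a monad with that property has the forgetful functor of its algebras preserving them (see e.g.\ \cite{Fre09}).

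Second, $F_{\caO}$ preserves the simplicial tensoring: since the simplicial structure on $\Alg_{\caO}(\caC)$ is set up so that $U_{\caO}(\cA^{K})=(U_{\caO}\cA)^{K}$, for $\cB\in\caC^{C}$, a simplicial set $K$, and an $\caO$-algebra $\cA$ one has natural bijections
\begin{align*}
\Hom\bigl(F_{\caO}(\cB\otimes K),\cA\bigr)&\cong\Hom\bigl(\cB\otimes K,U_{\caO}\cA\bigr)\cong\Hom\bigl(\cB,(U_{\caO}\cA)^{K}\bigr)\\
&=\Hom\bigl(\cB,U_{\caO}(\cA^{K})\bigr)\cong\Hom\bigl(F_{\caO}\cB,\cA^{K}\bigr)\cong\Hom\bigl(F_{\caO}\cB\otimes K,\cA\bigr),
\end{align*}
so $F_{\caO}(\cB\otimes K)\cong F_{\caO}\cB\otimes K$ by the Yoneda lemma; since $F_{\caO}$ preserves all colimits, $|F_{\caO}\cB_{\bullet}|_{\Alg_{\caO}(\caC)}\cong F_{\caO}|\cB_{\bullet}|_{\caC^{C}}$ for every simplicial object $\cB_{\bullet}$ in $\caC^{C}$.

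The crux is the claim that $T$ commutes with realization, $T|\cB_{\bullet}|_{\caC^{C}}\cong|T\cB_{\bullet}|_{\caC^{C}}$ naturally. Evaluating at a color $c$ and using that realization in $\caC^{C}$ is factorwise and, being a left adjoint, commutes with coproducts, with $(-)_{\Sigma_{n}}$, and with $\caO(c_{1},\dots,c_{n};c)\otimes(-)$ (tensoring with the fixed object $\caO(c_{1},\dots,c_{n};c)$, viewing $\caO$ as a constant simplicial operad), this reduces to the assertion that realization $s\caC\to\caC$ is strong symmetric monoidal for the levelwise monoidal structure, i.e.\ $|X^{(1)}_{\bullet}\otimes\dots\otimes X^{(n)}_{\bullet}|\cong|X^{(1)}_{\bullet}|\otimes\dots\otimes|X^{(n)}_{\bullet}|$. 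This is classical: since $\otimes$ preserves colimits in each variable and the simplicial tensoring is compatible with $\otimes$, one rewrites $|X^{(1)}_{\bullet}|\otimes\dots\otimes|X^{(n)}_{\bullet}|$ as the coend of $(X^{(1)}_{p_{1}}\otimes\dots\otimes X^{(n)}_{p_{n}})\otimes(\Delta^{p_{1}}\times\dots\times\Delta^{p_{n}})$ over $\Delta^{\times n}$, and this coend is identified with the one over $\Delta$ computing $|X^{(1)}_{\bullet}\otimes\dots\otimes X^{(n)}_{\bullet}|$ via the comparison of the realization of an $n$-fold multisimplicial object with that of its diagonal; at bottom this rests on $\Delta^{\mathrm{op}}$ being sifted. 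I expect this monoidality of realization, more precisely its careful $1$-categorical verification, to be the main obstacle in the write-up.

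Finally, one assembles the pieces. For each $n$ the object $\cA_{n}$ carries its canonical reflexive presentation $F_{\caO}U_{\caO}F_{\caO}U_{\caO}\cA_{n}\rightrightarrows F_{\caO}U_{\caO}\cA_{n}\to\cA_{n}$ as a coequalizer of free algebras, natural in $\cA_{n}$; levelwise this is a reflexive coequalizer of simplicial $\caO$-algebras. Applying $|-|_{\Alg_{\caO}(\caC)}$ (which preserves it), then $U_{\caO}$ (which preserves the resulting reflexive coequalizer), and then rewriting the two terms by the two facts above together with the crux, one obtains
\begin{align*}
U_{\caO}|\cA_{\bullet}|_{\Alg_{\caO}(\caC)}&\;\cong\;\mathrm{coeq}\bigl(|T^{2}U_{\caO}\cA_{\bullet}|_{\caC^{C}}\rightrightarrows|TU_{\caO}\cA_{\bullet}|_{\caC^{C}}\bigr)\\
&\;\cong\;\bigl|\,\mathrm{coeq}\bigl(T^{2}U_{\caO}\cA_{\bullet}\rightrightarrows TU_{\caO}\cA_{\bullet}\bigr)\bigr|_{\caC^{C}}\;\cong\;|U_{\caO}\cA_{\bullet}|_{\caC^{C}},
\end{align*}
where the second isomorphism uses that $|-|_{\caC^{C}}$ preserves colimits and the third uses once more that $U_{\caO}$ preserves the reflexive coequalizer presenting $\cA_{\bullet}$, so that $\mathrm{coeq}(T^{2}U_{\caO}\cA_{\bullet}\rightrightarrows TU_{\caO}\cA_{\bullet})\cong U_{\caO}\cA_{\bullet}$. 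All isomorphisms are natural in $\cA_{\bullet}$, which gives the asserted natural isomorphism.
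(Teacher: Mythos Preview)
Your argument is correct, but the paper's proof is considerably shorter and takes a different route. Both approaches ultimately rest on the same key fact---that the realization functor $|-|_{\caC}\colon s\caC\to\caC$ is strong symmetric monoidal (your ``crux'', the paper's Lemma~\ref{lemma:fgngd}(ii))---but the paper exploits it more directly. Rather than passing through the bar resolution, reflexive coequalizers, and an explicit analysis of the monad $T$, the paper simply observes that since $|-|_{\caC}$ is symmetric monoidal it induces a functor from $\caO$-algebras in $s\caC$ (which are exactly $s\Alg_{\caO}(\caC)$, viewing $\caO$ as a constant simplicial operad) to $\caO$-algebras in $\caC$, and this induced functor is left adjoint to $\mathrm{Sing}_{\Alg_{\caO}(\caC)}$. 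Since the simplicial cotensor on $\Alg_{\caO}(\caC)$ is defined so that $U_{\caO}(\cA^{K})=(U_{\caO}\cA)^{K}$, the right adjoint $\mathrm{Sing}_{\Alg_{\caO}(\caC)}$ coincides with the $\mathrm{Sing}$ coming from the simplicial structure on $\Alg_{\caO}(\caC)$; hence the two left adjoints agree, and one of them is $|-|_{\Alg_{\caO}(\caC)}$ while the other visibly has underlying object $|U_{\caO}(-)|_{\caC^{C}}$. Your approach has the virtue of being entirely explicit and of isolating exactly which colimit-preservation properties are used (in particular that $U_{\caO}$ preserves reflexive coequalizers), which could be useful in settings where the simplicial enrichment on the algebra category is less transparently described; the paper's approach buys brevity and avoids the bar-resolution bookkeeping altogether by working at the level of adjunctions.
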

\begin{proof}
In any simplicial model category there are adjoint functors $|-|_\caC\colon s \caC \to \caC $ and $\mathrm{Sing}_{\caC}\colon \caC\to s\caC$,
where $\mathrm{Sing_{\caC}}(X)$ is the simplicial object with $\mathrm{Sing_{\caC}}(X)_n=X^{\Delta[n]}$. Since $\Alg_{\caO}(\caC)$ is also a simplicial model category, 
we have the adjunction
$$
\xymatrix{
|-|_{\Alg_{\caO}(\caC)} \colon s \Alg_{\caO}(\caC) \ar@<0.50ex>[r] & \ar@<0.50ex>[l]  \Alg_{\caO}(\caC) \colon \mathrm{Sing}_{\Alg_{\caO}(\caC)}.
}
$$
By Lemma \ref{lemma:fgngd} (ii) $| - |_\caC$ is a symmetric monoidal functor.
Hence there exists an induced adjunction between $\caO$-algebras in $s\caC$ and $\caC$. 
But the $\caO$-algebras in $s \caC$ 
--- viewing $\caO$ as a constant simplicial object in $s\caC$ --- are precisely $s\Alg_{\caO}(\caC)$.  
We claim the two adjoint pairs are isomorphic. 
Indeed, 
if $\cA$ is an $\caO$-algebra, 
$\cA^{\Delta[n]}$ is $(U_{\caO}\cA)^{\Delta[n]}$ with $\caO$-algebra structure given by the composite $\End(\cA)\to \End(\cA^{\Delta[n]})$, 
as explained in~\cite[I.5]{Spi}. 
It follows easily that the right adjoints coincide.
\end{proof}

\begin{proposition} 
\label{proposition:rcn}
Suppose $\caO$ is an admissible $C$\nobreakdash-colored operad in a cofibrantly generated symmetric monoidal model category $\caC$.
\begin{itemize}
\item[{\rm (i)}] If $\caO$ has an underlying cofibrant $C$-colored collection, then every cofibrant $\caO$\nobreakdash-algebra is underlying cofibrant.
\item[{\rm (ii)}] If $\caO$ has an underlying cofibrant pointed $C$-colored collection and $\caC$ has a second symmetric monoidal model structure 
with the same weak equivalences and cofibrant unit, then every cofibrant $\caO$-algebra is underlying cofibrant in this model structure.
\end{itemize}
\end{proposition}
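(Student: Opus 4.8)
The plan is to prove both parts by a cell induction combined with the standard filtration of the underlying object of a free‑cell attachment of operadic algebras, as in \cite[\S12]{Fre09} (cf.\ \cite{Spi}, \cite{BM03}, \cite{BM07}). Since $\Alg_{\caO}(\caC)$ is cofibrantly generated and transferred along \eqref{free-forget}, and ``underlying cofibrant'' is closed under retracts while $U$ preserves retracts, it suffices to show that a cell $\caO$‑algebra $\cA_\lambda$ is underlying cofibrant, where $\emptyset=\cA_0\to\cA_1\to\cdots\to\cA_\lambda$ is a transfinite composite starting at the initial $\caO$‑algebra, each successor map $\cA_\alpha\to\cA_{\alpha+1}$ is a pushout of $F_{\caO}(j_\alpha)$ for a generating cofibration $j_\alpha$ of $\caC^C$ (built colorwise from those of $\caC$), and limit stages are colimits. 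For the base case, $F_{\caO}$ preserves initial objects, so the initial $\caO$‑algebra is $F_{\caO}(0)$; since $X\otimes 0\cong 0$ in a closed symmetric monoidal category, its underlying value at a color $c$ collapses to the object of nullary $\caO$‑operations with output $c$, which is cofibrant because $\caO$ has an underlying cofibrant collection.

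The crux is the inductive step: if $\cA$ is an $\caO$‑algebra with $U(\cA)$ cofibrant, $j\colon X\to Y$ a cofibration in $\caC^C$, and $\cB$ the pushout of $F_{\caO}(X)\to\cA$ along $F_{\caO}(j)$, then $U(\cA)\to U(\cB)$ is a cofibration. I would invoke the filtration exhibiting $U(\cB)$ as a sequential colimit $U(\cA)=\cB_0\to\cB_1\to\cdots$ in which each $\cB_{t-1}\to\cB_t$ is a pushout in $\caC^C$ of a map obtained by applying functors built from the objects $\caO(c_1,\dots,c_m;c)$ and tensor powers of $U(\cA)$, via suitable $\Sigma$‑coinvariants, to the $\Sigma_t$‑equivariant $t$‑fold iterated pushout‑product (latching) map of $j$. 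The pushout‑product axiom of $\caC$ together with the cofibrancy of each $\caO(c_1,\dots,c_m;c)$ — used via the equivariant pushout‑product properties of $\caC$ and the fact that tensoring a cofibrant object against a fibration is again a fibration — then forces each $\cB_{t-1}\to\cB_t$, hence $U(\cA)\to U(\cB)$, to be a cofibration.

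To finish (i): the monad on $\caC^C$ whose algebras are $\caO$‑algebras is finitary (it is assembled from coproducts, the functors $\caO(\dots)\otimes_{\Sigma_m}(-)$, and finite tensor powers, all of which preserve filtered colimits), so $U$ preserves filtered colimits and in particular transfinite composites; hence $U(\cA_\lambda)=\colim_\alpha U(\cA_\alpha)$ is a transfinite composite of cofibrations out of the cofibrant object $U(\cA_0)$, so it is cofibrant. For (ii) I would rerun the argument in the second model structure. A cofibrant pointed $C$‑colored collection $\caM$ has $\caM(c_1,\dots,c_m;c)$ cofibrant in $\caC$ for $m\neq 1$ and $I\to\caM(c;c)$ a cofibration in $\caC$, because the free pointed‑collection functor modifies only the arity‑one diagonal slots and attaching a cell there is a pushout along a generating cofibration of $\caC$ relative to $I$ (and retracts preserve this). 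Therefore, in the second model structure — which has the same weak equivalences, a cofibrant unit, and, in the situation of \cite{Mur15} that is of interest here, at least as many cofibrations — every $\caO(c_1,\dots,c_m;c)$ is cofibrant (for $m\neq 1$ directly; for $m=1$ as the target of the composite $0\to I\to\caO(c;c)$ of cofibrations), the cells $j_\alpha$ remain cofibrations, and the base‑case object stays cofibrant. So $\caO$ has an underlying cofibrant (unpointed) collection there, and the argument of (i) applies verbatim.

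I expect the filtration step to be the main obstacle: building the filtration of $U(\cB)$ correctly over all colors and arities, and especially verifying that the operadic $\Sigma$‑coinvariants send the relevant equivariant pushout‑product maps to cofibrations — this is exactly where the cofibrancy of $\caO$ and the (equivariant) pushout‑product axioms of $\caC$ are combined, and where, in part (ii), the cofibrancy of the unit genuinely enters through the arity‑one part of $\caO$. The rest is formal.
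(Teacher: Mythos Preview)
Your proposal is correct and follows precisely the strategy the paper defers to by citation: the cell induction plus the filtration of the underlying object of a free-cell pushout is exactly the content of \cite[Corollary~5.5]{BM03} and \cite[Proposition~4.8]{Spi}, which the paper simply invokes. You have unpacked what the paper leaves implicit, and your treatment of the base case and the role of the equivariant pushout-product is accurate.

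One point worth flagging concerns part~(ii). Your argument there relies on the second model structure having at least as many cofibrations as the first (so that the generating cofibrations $j_\alpha$ remain cofibrations and the pushout-product axiom of the second structure applies to them). You are explicit about this, attributing it to the situation of \cite{Mur15}, but note that the proposition as stated does not literally include this hypothesis. In fact the argument in \cite[Proposition~4.8]{Spi} also uses it, and it is the intended context throughout the paper (see the remark following Definition~\ref{def:strongly_admissible}); so this is not a defect in your proof but rather an unstated standing assumption that you have correctly surfaced. Without it, showing that first-model-structure cell attachments yield second-model-structure cofibrations would require a separate argument.
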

\begin{proof}
The proof for operads in \cite[Corollary 5.5]{BM03} extends to colored operads. (The proof of \cite[Theorem 4.1]{BM07} gives closely related steps.) 
Alternatively, use the colored operads version of \cite[Proposition 4.8]{Spi}.
\end{proof}

We refer to Appendix \ref{hegvfkehvck} for the Reedy model structure on simplicial categories.

\begin{lemma} 
\label{lemma:bgfhj}
Suppose $\caO$ is a $C$\nobreakdash-colored operad with an underlying cofibrant collection in a cofibrantly generated symmetric monoidal model category $\caC$.
Then $\caO$ --- viewed as an operad in $s\caC$ via the constant functor --- has an underlying cofibrant $C$-colored collection in $s\caC$.
\end{lemma}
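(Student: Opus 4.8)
The plan is to show that the constant simplicial object functor
$c_*\colon \Coll_C(\caC)\to\Coll_C(s\caC)$ is left Quillen, where $s\caC$ carries the Reedy model structure (Appendix~\ref{hegvfkehvck}). This suffices: the underlying $C$-colored collection of $\caO$, regarded as an operad in $s\caC$ by applying the constant functor entrywise, is precisely $c_*$ applied to the underlying collection of $\caO$, and the latter is cofibrant in $\Coll_C(\caC)$ by hypothesis, so left Quillenness of $c_*$ gives the conclusion since left Quillen functors preserve cofibrant objects.

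To analyze $c_*$, I would first identify $\Coll_C(s\caC)$ with the category $s\Coll_C(\caC)$ of simplicial objects in $\Coll_C(\caC)$: the coloring data (coproducts, symmetric group actions, latching and matching objects) is computed colorwise, hence commutes with the passage to simplicial objects. Under this identification $c_*$ becomes the constant simplicial object functor on $\Coll_C(\caC)$. One then checks that the transferred model structure on $\Coll_C(s\caC)$ coincides with the Reedy model structure on $s\Coll_C(\caC)$: both have the colorwise-and-levelwise weak equivalences, and, since limits in $\Coll_C(\caC)$ are created colorwise, the matching objects of a simplicial collection are computed colorwise, so that a Reedy fibration of simplicial collections is exactly a colorwise Reedy fibration of simplicial objects in $\caC$, i.e.\ a fibration in the transferred structure on $\Coll_C(s\caC)$.

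It then remains to observe that the constant simplicial object functor on $\Coll_C(\caC)$ (equivalently, on $\caC$ itself) is left Quillen for the Reedy model structure. Its right adjoint is evaluation at $[0]$, because $[0]$ is initial in $\Delta^{\mathrm{op}}$ and $\Coll_C(\caC)$ is complete, so $\lim_{\Delta^{\mathrm{op}}}=\mathrm{ev}_{[0]}$. And $\mathrm{ev}_{[0]}$ is right Quillen: it preserves weak equivalences, which are levelwise, and it preserves fibrations and trivial fibrations since the level-$0$ relative matching map of a Reedy (trivial) fibration $X_\bullet\to Y_\bullet$ is just $X_0\to Y_0$, the matching object $M_0$ being terminal. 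Hence $c_*$ is left Quillen and the lemma follows. The only step that requires genuine care is the verification in the previous paragraph that matching objects of simplicial collections are colorwise, so that the transferred and Reedy model structures on $\Coll_C(s\caC)$ agree; everything else is formal. Alternatively, one can bypass this identification: $c_*$ is symmetric monoidal and cocontinuous (being a left adjoint), hence commutes with the free collection functor, hence sends the generating (trivial) cofibrations of $\Coll_C(\caC)$ — the free collections on (trivial) cofibrations of the underlying product of copies of $\caC$ — to free collections on (trivial) cofibrations of the corresponding product over $s\caC$, which are (trivial) cofibrations in $\Coll_C(s\caC)$; being cocontinuous, $c_*$ then preserves all (trivial) cofibrations.
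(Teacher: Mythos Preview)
Your proof is correct and follows essentially the same strategy as the paper: first identify the transferred model structure on $\Coll_C(s\caC)$ with the Reedy model structure on $s\Coll_C(\caC)$, then show that the constant simplicial object functor takes cofibrant objects to Reedy cofibrant objects. The only difference is in how the second step is justified: the paper invokes the fact that $\Delta^{\mathrm{op}}$ has cofibrant constants \cite[Corollary~15.10.5]{Hirschhorn} and then cites \cite[Theorem~15.10.8(1)]{Hirschhorn}, whereas you argue directly that $c_*$ is left Quillen by observing that its right adjoint $\lim_{\Delta^{\mathrm{op}}}=\mathrm{ev}_{[0]}$ is right Quillen (the matching object at level $0$ being terminal). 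Your argument is a hands-on special case of Hirschhorn's general result, so the two proofs are really the same.
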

\begin{proof}
Suppose $G$ is a discrete group and $\caC^G$ is the category of objects in $\caC$ with right $G$-actions.
Then the Reedy model structure on $s(\caC^G)$ 
--- for the transferred model structure on $\caC^G$ --- 
coincides with the model structure on $(s\caC)^G$ transferred from the Reedy model structure on $s\caC$. 
Thus the corresponding model structures on $s\Coll_C(\caC)$ and $\Coll_C(s\caC)$ coincide.
Recall that  $\Delta^{\rm op}$ has cofibrant constants \cite[Co\-ro\-llary 15.10.5]{Hirschhorn}.
Thus cofibrancy of the underlying $C$\nobreakdash-colored collection of $\caO$ in $\Coll_C(\caC)$ implies
the underlying $C$-colored collection of $\caO$ --- viewed as a constant simplicial object --- is Reedy cofibrant in $s\Coll_C(\caC)$ \cite[Theorem 15.10.8(1)]{Hirschhorn}, 
and hence it is cofibrant in $\Coll_C(s\caC)$.
\end{proof}

\begin{lemma} 
\label{lemma:gfedg}
Suppose $\caO$ is an admissible $C$\nobreakdash-colored operad in a symmetric monoidal model category $\caC$. 
Then $s \Alg_{\caO}(\caC)$ has a model structure transferred from $(s \caC)^C$ --- equipped with the colorwise Reedy model structure --- which coincides with its Reedy model structure.
\end{lemma}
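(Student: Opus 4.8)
The plan is to verify that the Reedy model structure on $s\Alg_{\caO}(\caC)$ --- which exists because $\Alg_{\caO}(\caC)$ is a model category and $\Delta^{\rm op}$ is a Reedy category --- already satisfies the defining property of the structure transferred along the levelwise free-forgetful adjunction, so that the transferred structure exists automatically and the two coincide. First I would record the elementary identifications. One has $(s\caC)^C = s(\caC^C)$, and since in a product $\prod_{c\in C}\caC$ the matching objects, fibrations and weak equivalences are all determined colorwise, the colorwise Reedy model structure on $(s\caC)^C$ is precisely the Reedy model structure of $s(\caC^C)$; as $\caC$ is cofibrantly generated this structure is cofibrantly generated, say with generating cofibrations $I'$ and generating trivial cofibrations $J'$. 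Applying the adjunction $F_{\caO}\colon\caC^C\rightleftarrows\Alg_{\caO}(\caC)\colon U_{\caO}$ in each simplicial degree yields an adjunction $F_\bullet\colon(s\caC)^C\rightleftarrows s\Alg_{\caO}(\caC)\colon U_\bullet$; under the identification $s\Alg_{\caO}(\caC)\cong\Alg_{\caO}(s\caC)$ with $\caO$ the constant simplicial operad --- used in the proof of Lemma~\ref{corollary:bgfrd} --- the functor $F_\bullet$ is the free $\caO$-algebra functor on $s\caC$.

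The crux is that $U_\bullet$ detects the Reedy weak equivalences and the Reedy fibrations of $s\Alg_{\caO}(\caC)$. For weak equivalences this is immediate: the Reedy weak equivalences on either side are the levelwise ones, and $U_{\caO}$ detects weak equivalences in $\Alg_{\caO}(\caC)$ since that model structure is transferred from $\caC^C$. For fibrations, recall that $f\colon\cA_\bullet\to\cB_\bullet$ is a Reedy fibration exactly when each relative matching map $\cA_n\to\cB_n\times_{M_n\cB_\bullet}M_n\cA_\bullet$ is a fibration in $\Alg_{\caO}(\caC)$. Being a right adjoint, $U_{\caO}$ preserves the limits defining the matching objects, so $U_{\caO}(M_n\cA_\bullet)=M_n(U_\bullet\cA_\bullet)$ and $U_{\caO}$ carries the relative matching map of $f$ to that of $U_\bullet f$; since $U_{\caO}$ also detects fibrations in $\Alg_{\caO}(\caC)$, it follows that $f$ is a Reedy fibration if and only if $U_\bullet f$ is a colorwise Reedy fibration in $(s\caC)^C$.

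To conclude I would combine this with the adjunction: a map $f$ in $s\Alg_{\caO}(\caC)$ has the right lifting property with respect to $F_\bullet(J')$ if and only if $U_\bullet f$ has the right lifting property with respect to $J'$, that is, if and only if $U_\bullet f$ is a Reedy fibration, that is --- by the previous paragraph --- if and only if $f$ is a Reedy fibration; and likewise $f$ has the right lifting property with respect to $F_\bullet(I')$ if and only if $f$ is a Reedy trivial fibration. Together with the identification of weak equivalences, this shows that the Reedy model structure on $s\Alg_{\caO}(\caC)$ has weak equivalences and fibrations detected by $U_\bullet$, hence is the model structure transferred along $F_\bullet\dashv U_\bullet$, cofibrantly generated by $F_\bullet(I')$ and $F_\bullet(J')$ --- the smallness of domains being routine since $F_\bullet$ preserves colimits, $U_\bullet$ preserves filtered colimits (because $U_{\caO}$ does), and the domains of $I'$ and $J'$ are small. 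The only genuinely delicate point is the compatibility of $U_{\caO}$ with matching objects used in the second step --- in effect, that ``the Reedy structure of the transfer is the transfer of the Reedy structure'' --- and this reduces to right adjoints preserving limits together with the fact that $U_{\caO}$ detects weak equivalences and fibrations in $\Alg_{\caO}(\caC)$; the remaining verifications are bookkeeping.
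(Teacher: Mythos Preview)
Your proposal is correct and follows essentially the same approach as the paper: you verify that the already-existing Reedy model structure on $s\Alg_{\caO}(\caC)$ has weak equivalences and fibrations detected by $U_\bullet$, using that $U_{\caO}$ is a right adjoint (hence preserves matching objects and fiber products) and detects fibrations and weak equivalences. The paper's proof is the same argument in terser form; your additional remarks on cofibrant generation and smallness are extra bookkeeping the paper leaves implicit.
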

\begin{proof}
We show that the Reedy model structure on $s \Alg_{\caO}(\caC)$ is the transferred model structure from $(s \caC)^C$. 
Since the weak equivalences are defined objectwise in the Reedy model structure, 
the weak equivalences in $s\Alg_{\caO}(\caC)$ are precisely the maps that become weak equivalences in $(s\caC)^C$. 
A map in $s \Alg_{\caO}(\caC)$ is a Reedy fibration if certain maps involving matching objects and fiber products are fibrations in $\Alg_{\caO}(\caC)$.
But since the fibrations in $\Alg_{\caO}(\caC)$ are the underlying fibrations and the matching objects and fiber products commute with taking the underlying collection, 
the result follows.
\end{proof}

\begin{corollary} 
\label{corollary:gfddf}
Suppose $\caO$ is an admissible $C$-colored operad in a cofibrantly generated symmetric monoidal model category $\caC$.
\begin{itemize}
\item[{\rm (i)}] If $\caO$ has an underlying cofibrant $C$-colored collection, then any Reedy cofibrant object in $s \Alg_{\caO}(\caC)$ is Reedy cofibrant as an object in $s \caC$.
\item[{\rm (ii)}] Suppose $\caO$ has an underlying cofibrant pointed $C$-colored collection and $\caC$ has a second symmetric monoidal model structure with the same weak equivalences 
and cofibrant unit.  Then any Reedy cofibrant object in $s \Alg_{\caO}(\caC)$ is cofibrant in $s \caC$ equipped with the Reedy model structure induced by this model structure on $\caC$.
\end{itemize}
\end{corollary}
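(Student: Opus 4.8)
The plan is to deduce both parts from Proposition~\ref{proposition:rcn}, but applied not to $\caO$ in $\caC$ and rather to $\caO$ regarded as a constant simplicial operad in $s\caC$, equipped with the Reedy model structure. First I would recall, exactly as in the proof of Lemma~\ref{corollary:bgfrd}, that the category of $\caO$-algebras in $s\caC$ --- with $\caO$ viewed as a constant simplicial operad --- is canonically isomorphic to $s\Alg_{\caO}(\caC)$, compatibly with the forgetful functors down to $(s\caC)^C = s(\caC^C)$ and with the free functors $F_{\caO}$. Under this identification, Lemma~\ref{lemma:gfedg} says that the Reedy model structure on $s\Alg_{\caO}(\caC)$ is precisely the model structure on $\Alg_{\caO}(s\caC)$ transferred along $F_{\caO}\dashv U_{\caO}$ from the colorwise Reedy model structure on $(s\caC)^C$; in particular $\caO$ is an admissible $C$-colored operad in $s\caC$, and the latter --- with its objectwise tensor product and Reedy model structure, as in the discussion preceding Lemma~\ref{corollary:bgfrd} --- is again a cofibrantly generated symmetric monoidal model category. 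Thus a Reedy cofibrant object of $s\Alg_{\caO}(\caC)$ is the same thing as a cofibrant object of $\Alg_{\caO}(s\caC)$, and ``Reedy cofibrant as an object of $s\caC$'' means underlying cofibrant in the sense of $(s\caC)^C$.

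For part (i): by Lemma~\ref{lemma:bgfhj}, $\caO$ viewed as a constant operad in $s\caC$ has an underlying cofibrant $C$-colored collection. Applying Proposition~\ref{proposition:rcn}(i) to $\caO$ in $s\caC$ then gives that every cofibrant $\caO$-algebra in $s\caC$ is underlying cofibrant, which by the identification of the first paragraph is exactly the assertion.

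For part (ii): the second symmetric monoidal model structure on $\caC$ --- with the same weak equivalences and cofibrant unit --- induces by the Reedy construction a second symmetric monoidal model structure on $s\caC$. It has the same weak equivalences as the Reedy structure coming from the original one on $\caC$, since Reedy weak equivalences are detected objectwise; and its unit, the constant simplicial object on the unit $I$ of $\caC$, is Reedy cofibrant because $I$ is cofibrant in the second structure on $\caC$ and $\Delta^{\mathrm{op}}$ has cofibrant constants \cite[Corollary 15.10.5, Theorem 15.10.8(1)]{Hirschhorn}. Moreover the argument of Lemma~\ref{lemma:bgfhj} applies verbatim with $\Coll_C$ replaced by $\Coll_C^{\bullet}$ (the identification of $s\Coll_C^{\bullet}(\caC)$ with $\Coll_C^{\bullet}(s\caC)$ and the cofibrant-constants property being insensitive to the unit maps), so $\caO$ as a constant operad in $s\caC$ has an underlying cofibrant pointed $C$-colored collection. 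Proposition~\ref{proposition:rcn}(ii), applied to $\caO$ in $s\caC$ together with these two Reedy model structures, then yields the claim.

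The step I expect to be the main obstacle is not deep but organizational: one must verify that the chain of identifications --- $s\Alg_{\caO}(\caC)\cong\Alg_{\caO}(s\caC)$, $s\Coll_C(\caC)\cong\Coll_C(s\caC)$ and its pointed analogue, and ``Reedy structure of a transferred structure'' versus ``structure transferred from the Reedy structure'' --- are mutually compatible with the relevant free and forgetful functors, and that $s\caC$ with the (first and, in (ii), second) Reedy model structure genuinely satisfies the standing hypotheses of Proposition~\ref{proposition:rcn}: cofibrant generation, symmetric monoidality for the objectwise tensor, and, in case (ii), a cofibrant unit. Once these compatibilities are in place, the corollary is immediate.
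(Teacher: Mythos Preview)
Your proposal is correct and follows essentially the same route as the paper: both arguments pass to the constant operad in $s\caC$, invoke Lemma~\ref{lemma:gfedg} to identify the Reedy and transferred model structures on $s\Alg_{\caO}(\caC)\cong\Alg_{\caO}(s\caC)$, use Lemma~\ref{lemma:bgfhj} for cofibrancy of the underlying collection, and then apply Proposition~\ref{proposition:rcn} to $s\caC$. The only cosmetic difference is that for part~(ii) the paper cites \cite[Proposition~4.8]{Spi} directly rather than Proposition~\ref{proposition:rcn}(ii) (which is itself proved by that reference), and you are more explicit than the paper about the auxiliary verifications (cofibrancy of the constant unit in the second Reedy structure, the pointed analogue of Lemma~\ref{lemma:bgfhj}).
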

\begin{proof}
The category of $\caO$-algebras in $s \caC$ has a model structure transferred from $(s \caC)^C$ by assumption and Lemma~\ref{lemma:gfedg}. 
Moreover, $s \caC$ is a symmetric monoidal model category by Lemma~\ref{lemma:dfgfg} (i).
Also every object in $s\caC$ is small relative to the whole category.

To prove part (i) note that the constant operad on $\caO$ in $s\caC$ has an underlying cofibrant collection by Lemma~\ref{lemma:bgfhj}. 
Thus we can apply Proposition~\ref{proposition:rcn}(i) to $s \caC$ with the Reedy model structure. 
Since a Reedy cofibrant object of $s \Alg(\caO)$ is cofibrant for the transferred model structure, 
by Lemma \ref{lemma:gfedg}, 
this gives the result.

Part (ii) is proved similarly by reference to \cite[Proposition 4.8]{Spi}.
(By assumption the constant operad on $\caO$ in $s\caC$ has an underlying cofibrant collection in $s\caC$ for the Reedy model structure induced by the second model structure on $\caC$.) 
\end{proof}

\section{(Co)localization of algebras over colored operads}
\label{(Co)localizationofalgebrasovercoloredoperads}

\subsection{Colocalization of algebras}
In this section we show that tensor-closed $\caK$\nobreakdash-colocalization functors preserve algebras over cofibrant $C$-colored operads. 
More precisely, we prove that if $\caO$ is a cofibrant $C$-colored operad and $f$ is an $\bL F(\caK)$-colocalization in the category of $\caO$-algebras $\Alg_{\caO}(\caC)$, 
then $U(f)$ is a $\caK$\nobreakdash-colocalization in $\caC$, where $U$ denotes the forgetful functor.

If $\caK$ is a set of isomorphism classes of objects of $\Ho(\caC)$, and $C$ is a set of colors, denote by $\caK^C$ the set of objects in $\Ho(\caC)^C$ defined as $\caK^C=\prod_{c\in C}\caK$.
Note that an object in $\caC^C$ is $\caK^C$-colocal if and only if it is colorwise $\caK$-colocal.
\begin{lemma} 
\label{lemma:hocolimUweakequivalence}
Suppose $\caO$ is a strongly admissible $C$-colored colored operad in a cofibrantly generated simplicial symmetric monoidal model category $\caC$.
For a simplicial object $\cA_\bullet$ in $\Alg_{\caO}(\caC)$, the canonical map
$$
\hocolim_{\Delta^\mathrm{op}} U(\cA_\bullet) \longrightarrow U(\hocolim_{\Delta^\mathrm{op}} \cA_\bullet)
$$
is a weak equivalence, where $U$ denotes the corresponding forgetful functor.
\end{lemma}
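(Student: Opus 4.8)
The strategy is to reduce the homotopy colimit over $\Delta^{\mathrm{op}}$ to a bar-type construction where we can apply the strict realization results already in place, namely Lemma~\ref{corollary:bgfrd} and the cofibrancy transfer of Corollary~\ref{corollary:gfddf}. First I would recall that since $\caO$ is strongly admissible, there is a strongly admissible pair $(\caO,\caO',\varphi)$ with $\varphi_!\colon\Alg_{\caO'}(\caC)\rightleftarrows\Alg_{\caO}(\caC)\colon\varphi^*$ a Quillen equivalence. The point of passing to $\caO'$ is that $\caO'$ satisfies condition (i) or (ii) of Definition~\ref{def:strongly_admissible}, so Corollary~\ref{corollary:gfddf} applies to it: Reedy cofibrant simplicial objects in $\Alg_{\caO'}(\caC)$ are Reedy cofibrant (in the appropriate model structure) in $s\caC$. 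Since $\varphi_!$ and $\varphi^*$ commute with the forgetful functors up to weak equivalence on underlying objects, and since homotopy colimits are invariant under Quillen equivalence, it suffices to prove the statement for $\caO'$; I will therefore assume from the outset that $\caO$ itself satisfies (i) or (ii).

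Next I would compute the homotopy colimit via a Reedy cofibrant replacement. Given $\cA_\bullet\in s\Alg_{\caO}(\caC)$, take a Reedy cofibrant replacement $\widetilde{\cA}_\bullet\xrightarrow{\sim}\cA_\bullet$ in $s\Alg_{\caO}(\caC)$, which by Lemma~\ref{lemma:gfedg} is the same as a Reedy cofibrant replacement for the model structure transferred from $(s\caC)^C$. Then $\hocolim_{\Delta^{\mathrm{op}}}\cA_\bullet$ is computed by the realization $|\widetilde{\cA}_\bullet|_{\Alg_{\caO}(\caC)}$, since in a simplicial model category the realization of a Reedy cofibrant simplicial object models the homotopy colimit over $\Delta^{\mathrm{op}}$. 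By Lemma~\ref{corollary:bgfrd}, $U(|\widetilde{\cA}_\bullet|_{\Alg_{\caO}(\caC)})\cong |U(\widetilde{\cA}_\bullet)|_{\caC^C}$ on the nose. On the source side, $U(\widetilde{\cA}_\bullet)\to U(\cA_\bullet)$ is a colorwise weak equivalence of simplicial objects, and by Corollary~\ref{corollary:gfddf} $U(\widetilde{\cA}_\bullet)$ is Reedy cofibrant in $(s\caC)^C$ (using the second model structure in case~(ii), which has the same weak equivalences), so $|U(\widetilde{\cA}_\bullet)|_{\caC^C}$ computes $\hocolim_{\Delta^{\mathrm{op}}}U(\cA_\bullet)$. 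Assembling these identifications, the canonical comparison map is identified with an isomorphism composed with weak equivalences, hence is a weak equivalence.

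The main obstacle is the bookkeeping around the two model structures in case~(ii): the realization $|-|_{\caC}$ and the Reedy structure must be taken with respect to the auxiliary model structure with cofibrant unit, while the weak equivalences (and hence the homotopy colimit being computed) are those of the original structure. I would need to check that Lemma~\ref{corollary:bgfrd} and the simplicial-enrichment arguments of \cite[I.5]{Spi} go through for that auxiliary structure — they do, because it is also a cofibrantly generated symmetric monoidal model category and the two structures share weak equivalences — and that the realization of a Reedy cofibrant object still models $\hocolim_{\Delta^{\mathrm{op}}}$ in the original structure, which holds precisely because the weak equivalences agree. A secondary point requiring care is the compatibility of $\hocolim_{\Delta^{\mathrm{op}}}$ with the Quillen equivalence $\varphi_!\dashv\varphi^*$ in the reduction step; this is routine since $\varphi^*$ preserves underlying objects strictly and detects weak equivalences, so one may equally well run the whole argument after replacing $\cA_\bullet$ by $\varphi^*$ of a fibrant replacement, or simply replace $\cA_\bullet$ by $L\varphi_!$ of a cofibrant replacement in $s\Alg_{\caO'}(\caC)$.
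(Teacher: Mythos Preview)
Your proof is correct and follows essentially the same line as the paper's: reduce to $\caO'$ via the Quillen equivalence, take a Reedy cofibrant replacement, apply Corollary~\ref{corollary:gfddf} to get underlying Reedy cofibrancy, and then combine Lemma~\ref{corollary:bgfrd} with Lemma~\ref{lemma:ngfdv}. The one simplification the paper makes that you do not: because $\caC$ is assumed simplicial symmetric monoidal, Remark~\ref{rem:cofibrant_unit} forces the unit to be cofibrant, so case~(ii) of Definition~\ref{def:strongly_admissible} collapses to case~(i) and the auxiliary-model-structure bookkeeping you flag as the main obstacle never arises.
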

\begin{proof}
Since $\caC$ is a simplicial symmetric monoidal model category, Remark~\ref{rem:cofibrant_unit} shows we may assume $\caO$ is strongly monoidal and satisfies 
Definition~\ref{def:strongly_admissible}(i). 
We may also assume that $\caO$ has an underlying cofibrant collection, due to the Quillen equivalence between $\Alg_{\caO}(\caC)$ and $\Alg_{\caO'}(\caC)$ in the definition of a strongly 
admissible operad. 

By homotopy invariance of the homotopy colimit, we may further assume that $\cA_\bullet$ is Reedy cofibrant. 
By Lemma~\ref{lemma:nbgfgfb}, 
$s \caC$ is cofibrantly generated.  
Thus by Corollary \ref{corollary:gfddf}(i) $U(\cA_\bullet)$ is Reedy cofibrant as well.
By Lemma \ref{lemma:ngfdv}, 
$|\cA_\bullet|_{\Alg(\caO)}$ computes the homotopy colimit of $\cA_\bullet$, and $|U(\cA_\bullet)|_{\caC^C}$ computes the homotopy colimit of $U(\cA_\bullet)$. 
Corollary \ref{corollary:bgfrd} gives an isomorphism $|U(\cA_\bullet)|_{\caC^C} \cong U(|\cA_\bullet|_{\Alg(\caO)})$,
which finishes the proof.
\end{proof}

Let $\caC$ be a symmetric monoidal model category and $\caO$ a $C$-colored operad in $\caC$. 
Given any $\caO$\nobreakdash-algebra $\cA$ in $\Alg_{\caO}(\caC)$ we define the \emph{standard simplicial object associated to $\cA$} by setting $\cA_n=(FU)^{n+1} \cA$ with the usual 
structure maps. 
Here, 
$F$ and $U$ denote the free functor and the forgetful functor, respectively. 
There is a canonical augmentation $\cA_{\bullet}\to \cA$ obtained by viewing $\cA$ as a constant simplicial object.

\begin{lemma} 
\label{lemma:hocolimmap}
Suppose $\caO$ is a strongly admissible colored operad in a cofibrantly generated simplicial symmetric monoidal model category $\caC$. 
For every $\caO$-algebra $\cA$, the augmentation map induces a canonical weak equivalence $\hocolim_{\Delta^{\mathrm{op}}}\cA_\bullet \to \cA$.
\end{lemma}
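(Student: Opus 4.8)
The plan is to apply the forgetful functor $U$, use that it reflects weak equivalences, commute it past the homotopy colimit by means of Lemma~\ref{lemma:hocolimUweakequivalence}, and then reduce to the classical extra-degeneracy argument for a monadic bar resolution.

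First, since $\caO$ is strongly admissible it is in particular admissible, so the model structure on $\Alg_{\caO}(\caC)$ is transferred from $\caC^{C}$ along the adjunction \eqref{free-forget}; in particular a map of $\caO$-algebras is a weak equivalence precisely when $U$ sends it to one. Hence it suffices to prove that $U$ applied to the augmentation $\hocolim_{\Delta^{\mathrm{op}}}\cA_{\bullet}\to\cA$ is a weak equivalence in $\caC^{C}$. Lemma~\ref{lemma:hocolimUweakequivalence} provides a weak equivalence $\hocolim_{\Delta^{\mathrm{op}}}U(\cA_{\bullet})\to U(\hocolim_{\Delta^{\mathrm{op}}}\cA_{\bullet})$ which is natural in the simplicial object, hence compatible with the augmentations once one uses that $\hocolim_{\Delta^{\mathrm{op}}}$ of a constant diagram recovers its value (the category $\Delta^{\mathrm{op}}$ having the initial object $[0]$). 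By two-out-of-three it then remains to show that the map $\hocolim_{\Delta^{\mathrm{op}}}U(\cA_{\bullet})\to U(\cA)$ induced by the augmented simplicial object $U(\cA_{\bullet})\to U(\cA)$ is a weak equivalence.

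Next I would bring in the monad. Writing $T=UF$ for the monad on $\caC^{C}$, the identity $(FU)^{n+1}=F(UF)^{n}U$ yields natural isomorphisms $U(\cA_{n})\cong T^{n+1}U(\cA)$ under which $U(\cA_{\bullet})\to U(\cA)$ is identified with the standard bar resolution of the $T$-algebra $U(\cA)$, whose action map is $U(\varepsilon_{\cA})$ for the counit $\varepsilon\colon FU\to\mathrm{id}$. This augmented simplicial object carries the usual extra degeneracies $s_{-1}\colon T^{n+1}U(\cA)\to T^{n+2}U(\cA)$ induced by the unit $\eta\colon\mathrm{id}\to T$, the relevant identities being immediate from the monad axioms and the triangle identity $U(\varepsilon_{\cA})\circ\eta_{U(\cA)}=\mathrm{id}$. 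Thus $U(\cA_{\bullet})\to U(\cA)$ is a split augmented simplicial object, so the constant simplicial object on $U(\cA)$ is a simplicial deformation retract of $U(\cA_{\bullet})$.

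Finally, the homotopy colimit over $\Delta^{\mathrm{op}}$ sends simplicial homotopy equivalences to weak equivalences and sends the constant diagram on $U(\cA)$ to $U(\cA)$; together with the previous step this gives that $\hocolim_{\Delta^{\mathrm{op}}}U(\cA_{\bullet})\to U(\cA)$ is a weak equivalence, finishing the proof. The one step that is not purely formal is this interaction of the homotopy colimit with the extra degeneracies --- that a split augmented simplicial object has homotopy colimit equal to its augmentation --- and I expect it to be the main obstacle; it follows from the standard realization-of-split-simplicial-objects lemma, or directly from the observation that the contracting homotopy supplied by $s_{-1}$ is preserved by the homotopy colimit functor. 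All remaining steps are diagram chases together with Lemma~\ref{lemma:hocolimUweakequivalence} and the defining property of the transferred model structure.
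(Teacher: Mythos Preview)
Your proposal is correct and follows essentially the same approach as the paper: both arguments reduce to Lemma~\ref{lemma:hocolimUweakequivalence} together with the observation that $U(\cA_\bullet)\to U(\cA)$ is a split augmented simplicial object. The paper's proof is a single sentence recording exactly these two ingredients; you have simply unpacked why the splitting holds (the monadic extra degeneracy) and why it forces the homotopy colimit to agree with the augmentation, which is precisely what the paper leaves implicit.
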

\begin{proof}
This follows from Lemma~\ref{lemma:hocolimUweakequivalence} since $U(\cA_\bullet) \to U(\cA)$ is a split augmented simplicial object.
\end{proof}

\begin{remark}
In Lemma \ref{lemma:hocolimmap} one should be mindful of forming the ``correct" derived simplicial object, 
i.e., 
in degree $n$ it is weakly equivalent to $(FQU)^{n+1} \cA$, 
where $Q$ is a cofibrant replacement functor in $\caC^C$.
\end{remark}

\begin{lemma}
\label{lemma:underlyingcolocal} 
Let $\caO$ be a strongly admissible $C$-colored operad in a cofibrantly generated simplicial symmetric monoidal model category $\caC$, 
and $\caK$ a tensor-closed set of isomorphism classes of objects of $\Ho(\caC)$. 
Suppose $\caO(c_1, \ldots, c_n;c)\otimes^{\bL} -$ preserves $\caK$-colocal objects for all $(c_1,\ldots, c_n,c)$, $n\ge 0$.
If $X$ in $\caC^C$ is colorwise $\caK$-colocal, 
then $\bL F(X)$ is underlying colorwise $\caK$-colocal.
\end{lemma}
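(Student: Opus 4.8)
The plan is to compute the underlying object of $\bL F(X)$ directly from the formula for the free $\caO$-algebra functor and to recognize it, one color at a time, as a homotopy colimit of $\caK$-colocal objects; closure of the $\caK$-colocal objects under homotopy colimits then finishes the proof.

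First I would run the reductions from the proof of Lemma~\ref{lemma:hocolimUweakequivalence}. Since $\caC$ is simplicial, Remark~\ref{rem:cofibrant_unit} and the Quillen equivalence $\varphi_!\colon\Alg_{\caO'}(\caC)\rightleftarrows\Alg_{\caO}(\caC)\colon\varphi^*$ of Definition~\ref{def:strongly_admissible} --- which is compatible with the forgetful functors through $\varphi\colon\caO'\to\caO$, so that the underlying object of $\bL F(X)$ is unchanged up to weak equivalence --- let us assume that $\caO$ itself has an underlying cofibrant $C$-colored collection. Then I would choose a cofibrant replacement $QX\to X$ in $\caC^C$; the object $F(QX)$ represents $\bL F(X)$, and since the weak equivalences in $\Alg_{\caO}(\caC)$ are the underlying ones, $U$ preserves them, so for each $c\in C$ the object $U(\bL F(X))(c)$ is weakly equivalent to
\[
U(F(QX))(c)=\coprod_{n\ge 0}\ \coprod_{c_1,\dots,c_n\in C}\caO(c_1,\dots,c_n;c)\otimes_{\Sigma_n}QX(c_1)\otimes\cdots\otimes QX(c_n).
\]
Grouping the inner coproduct by $\Sigma_n$-orbits of tuples, a representative tuple $(c_1,\dots,c_n)$ with stabilizer $\Gamma$ contributes the summand $\caO(c_1,\dots,c_n;c)\otimes_{\Gamma}\big(QX(c_1)\otimes\cdots\otimes QX(c_n)\big)$.

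The step I expect to be the main obstacle is showing that, under these hypotheses, each such strict $\Gamma$-orbit computes the corresponding homotopy orbit: that $\caO(c_1,\dots,c_n;c)\otimes_{\Gamma}\big(QX(c_1)\otimes\cdots\otimes QX(c_n)\big)$ is weakly equivalent to the homotopy orbit $\big(Y_{n,\vec c}\big)_{h\Gamma}=\hocolim_{B\Gamma}Y_{n,\vec c}$ of the one-object diagram with value $Y_{n,\vec c}:=\caO(c_1,\dots,c_n;c)\otimes^{\bL}QX(c_1)\otimes^{\bL}\cdots\otimes^{\bL}QX(c_n)$. This equivariant-cofibrancy statement is precisely the kind of input that is built into the notion of a (strongly) admissible operad with underlying cofibrant collection; I would extract it from the filtration of the free-algebra functor together with the analysis behind Proposition~\ref{proposition:rcn} (cf.\ \cite[Corollary~5.5]{BM03} and \cite[Proposition~4.8]{Spi}). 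Consequently $U(\bL F(X))(c)$ is, in $\Ho(\caC)$, a coproduct of the objects $\big(Y_{n,\vec c}\big)_{h\Gamma}$ over all $n\ge 0$ and all orbits of tuples.

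Granting this, the conclusion is formal. Each $QX(c_i)$ is weakly equivalent to $X(c_i)$, hence $\caK$-colocal, and since $\caK$ is tensor-closed the finite (possibly empty) derived tensor product $QX(c_1)\otimes^{\bL}\cdots\otimes^{\bL}QX(c_n)$ is $\caK$-colocal; applying the hypothesis that $\caO(c_1,\dots,c_n;c)\otimes^{\bL}-$ preserves $\caK$-colocal objects shows $Y_{n,\vec c}$ is $\caK$-colocal. The $\caK$-colocal objects are closed under homotopy colimits, so $\big(Y_{n,\vec c}\big)_{h\Gamma}=\hocolim_{B\Gamma}Y_{n,\vec c}$ is $\caK$-colocal, as is any coproduct of such terms. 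Hence $U(\bL F(X))(c)$ is $\caK$-colocal for every $c\in C$, i.e.\ $\bL F(X)$ is underlying colorwise $\caK$-colocal.
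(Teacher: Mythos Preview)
Your proposal is correct and follows essentially the same route as the paper: reduce via strong admissibility and Remark~\ref{rem:cofibrant_unit} to an operad with underlying cofibrant collection, write out $U(F(QX))(c)$ via the free-algebra formula, identify each $\Sigma_n$-quotient as a homotopy quotient of $\caK$-colocal objects, and conclude by closure under homotopy colimits. The paper's proof is terser---it simply asserts that ``$F(X)(c)$ is a homotopy quotient of $\caK$-colocal objects''---whereas you have made the orbit/stabilizer bookkeeping and the equivariant-cofibrancy input explicit, correctly pointing to Proposition~\ref{proposition:rcn} and \cite[Corollary~5.5]{BM03}, \cite[Proposition~4.8]{Spi} for the step that strict orbits compute homotopy orbits.
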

\begin{proof}
Since $\caO$ is strongly admissible and $\caC$ is a simplicial symmetric monoidal model category,
we may assume that $\caO$ has an underlying cofibrant collection.  
We note that $\caO(c;c) \otimes X$ is cofibrant in $\caC$ for every $c \in C$ and cofibrant $X$ in $\caC$.
Moreover, we have  
$$
F(X)(c)=\coprod_{n \ge 0} \left( \coprod_{d \in C^n}
\caO(d_1,\ldots,d_n;c) \otimes_{\Sigma_n} X(d_1) \otimes \cdots \otimes X(d_n) \right).
$$
The result follows now from the fact that $\caK$ is tensor-closed,
$\caK$-colocal objects are closed under coproducts, 
and $F(X)(c)$ is a homotopy quotient of $\caK$-colocal objects for every $c\in C$, 
hence $\caK$-colocal.
\end{proof}
\begin{remark}
If $\caO(c_1,\ldots, c_n;c)$ is $\caK$-colocal, 
then $\caO(c_1,\ldots, c_n;c)\otimes -$ preserves $\caK$-colocal objects for all $(c_1,\ldots, c_n, c)$, $n\ge 0$, 
since $\caK$ is tensor-closed. 
The converse holds provided the unit $I$ is $\caK$-colocal.
\end{remark}

\begin{lemma} 
\label{lemma:hocolimunderlying}
Under the same assumptions as in Lemma \ref{lemma:underlyingcolocal},
let $D\colon\mathcal{I} \to \Alg_{\caO}(\caC)$ be a diagram of underlying colorwise $\caK$-colocal algebras. 
Then $\hocolim_{\mathcal{I}} D$ is underlying colorwise $\caK$-colocal.
\end{lemma}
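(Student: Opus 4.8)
The plan is to reduce an arbitrary homotopy colimit in $\Alg_{\caO}(\caC)$ to two situations already under control: homotopy colimits over $\Delta^{\mathrm{op}}$, with which $U$ commutes by Lemma~\ref{lemma:hocolimUweakequivalence}, and free $\caO$-algebras, whose underlying objects are colorwise $\caK$-colocal by Lemma~\ref{lemma:underlyingcolocal}. I will use freely that the colorwise $\caK$-colocal objects of $\caC^C$ are closed under homotopy colimits (homotopy colimits in $\caC^C$ are computed colorwise, and the $\caK$-colocal objects of $\Ho(\caC)$ are closed under homotopy colimits), that $U$ preserves weak equivalences, and that the left Quillen functor $F=F_{\caO}$ commutes with homotopy colimits (on cofibrant objects $F$ agrees with $\bL F$).

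First I would resolve $D$ levelwise. Choosing a functorial cofibrant replacement $Q$ in $\caC^C$, assign to each $\caO$-algebra its derived standard simplicial object, with $n$-simplices $(FQU)^{n+1}(-)$; this construction is functorial, so it yields a simplicial object $n\mapsto D_n$ in $\mathrm{Fun}(\mathcal{I},\Alg_{\caO}(\caC))$ together with an augmentation $D_\bullet\to D$. By Lemma~\ref{lemma:hocolimmap} (with the subsequent remark, so that the derived resolution is used) the induced map $\hocolim_{\Delta^{\mathrm{op}}}D_\bullet\to D$ is an objectwise weak equivalence of $\mathcal{I}$-diagrams, hence by homotopy invariance and interchange of homotopy colimits,
$$
\hocolim_{\mathcal{I}} D \;\simeq\; \hocolim_{\mathcal{I}}\hocolim_{\Delta^{\mathrm{op}}} D_\bullet \;\simeq\; \hocolim_{\Delta^{\mathrm{op}}}\bigl(n\mapsto \hocolim_{\mathcal{I}} D_n\bigr).
$$

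It remains to control the underlying object of $\hocolim_{\mathcal{I}} D_n$ for each $n$. Write $D_n=F(G_n)$ with $G_n=QU(FQU)^{n}D(-)\colon\mathcal{I}\to\caC^C$. Then $G_0(i)=QU D(i)$ is colorwise $\caK$-colocal, since $UD(i)$ is so by hypothesis and $Q$ is a weak equivalence, and inductively $G_{n+1}(i)=QU F(G_n(i))$ is colorwise $\caK$-colocal by Lemma~\ref{lemma:underlyingcolocal} (as $G_n(i)$ is cofibrant, $F(G_n(i))=\bL F(G_n(i))$). Hence each $\hocolim_{\mathcal{I}} G_n$ is colorwise $\caK$-colocal, and since $F$ commutes with homotopy colimits, $\hocolim_{\mathcal{I}} D_n\simeq \bL F(\hocolim_{\mathcal{I}} G_n)$, whose underlying object is colorwise $\caK$-colocal again by Lemma~\ref{lemma:underlyingcolocal}. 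Applying $U$ and Lemma~\ref{lemma:hocolimUweakequivalence},
$$
U(\hocolim_{\mathcal{I}} D) \;\simeq\; \hocolim_{\Delta^{\mathrm{op}}}\bigl(n\mapsto U(\hocolim_{\mathcal{I}} D_n)\bigr)
$$
is a homotopy colimit of colorwise $\caK$-colocal objects, hence colorwise $\caK$-colocal. The one delicate point is setting up the standard resolution so that it is simultaneously functorial in $\mathcal{I}$ and \emph{correctly derived} in each simplicial degree, so that Lemmas~\ref{lemma:hocolimmap} and~\ref{lemma:underlyingcolocal} both apply, together with justifying the interchange $\hocolim_{\mathcal{I}}\hocolim_{\Delta^{\mathrm{op}}}\simeq\hocolim_{\Delta^{\mathrm{op}}}\hocolim_{\mathcal{I}}$; the remaining steps are direct appeals to the preceding lemmas and the closure properties of colocal objects.
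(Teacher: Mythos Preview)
Your argument is correct and follows essentially the same route as the paper: resolve each $D(i)$ by its bar construction, interchange the two homotopy colimits, use that $F$ commutes with homotopy colimits together with Lemma~\ref{lemma:underlyingcolocal} to see that each $\hocolim_{\mathcal I} D_n$ is underlying colorwise $\caK$-colocal, and finish with Lemma~\ref{lemma:hocolimUweakequivalence}. The only cosmetic difference is that the paper first reduces to $\caO$ having an underlying cofibrant collection and replaces $D$ by a cofibrant diagram, so that $(FU)^{n+1}$ is already derived (via Proposition~\ref{proposition:rcn}(i)) and no explicit $Q$ is needed; this sidesteps exactly the ``delicate point'' you flag about making the resolution simultaneously functorial and correctly derived.
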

\begin{proof}
We may assume $\caO$ has an underlying cofibrant collection because $\caC$ is a simplicial symmetric monoidal model category. 
Also assume without loss of generality that $D$ takes values in cofibrant objects. 
For every $i\in \mathcal{I}$, 
let $D(i)_\bullet \to D(i)$ be the augmented standard simplicial object associated to $D(i)$. 
Note that by Proposition \ref{proposition:rcn}(i) and the explicit formula for the free functor $F$, 
$U(FU)^n D(i)$ is cofibrant for every $i\in\mathcal{I}$ and $n \ge 0$. 
For $X_n= \hocolim_I D(-)_n$ we have $X_n\simeq \bL F(\hocolim_I U(FU)^n D(-))$. 
By Lemma~\ref{lemma:underlyingcolocal}, each $U(FU)^n D(i)$ is colorwise $\caK$-colocal and thus $X_n$ is underlying colorwise $\caK$-colocal.
Lemma~\ref{lemma:hocolimmap} implies that $\hocolim_ID \simeq\hocolim_{\Delta^{\rm op}} X_\bullet$. 
Finally, 
$\hocolim_{\Delta^{\rm op}}U(X_\bullet) \simeq U(\hocolim_{\Delta^{\rm op}} X_\bullet)$ follows from Lemma~\ref{lemma:hocolimUweakequivalence}.
\end{proof}

\begin{proposition} 
\label{proposition:colocalunderlying}
With the same assumptions as in Lemma \ref{lemma:underlyingcolocal} the following holds.
\begin{itemize}
\item[{\rm (i)}] If $\cA$ is an  underlying colorwise $\caK$-colocal $\caO$-algebra, then $\cA$ is $\bL F(\caK^C)$\nobreakdash-colocal.
\item[{\rm (ii)}] If $\Alg_{\caO}(\caC)$ has a good $\bL F(\caK^C)$-colocalization, then every $\bL F(\caK^C)$-colocal object is underlying colorwise $\caK$-colocal.
\end{itemize}
\end{proposition}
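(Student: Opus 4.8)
The plan is to establish the two parts separately. Part~(i) rests on a remark valid for any Quillen adjunction $\bL F\dashv\bR U$ and any set $\mathcal{S}$ of objects of the source homotopy category: the derived adjunction equivalence $\map(\bL F X,g)\simeq\map(X,\bR U g)$ shows that $\bR U$ carries $\bL F(\mathcal{S})$-colocal equivalences to $\mathcal{S}$-colocal equivalences, and a second application then shows that $\bL F$ carries $\mathcal{S}$-colocal objects to $\bL F(\mathcal{S})$-colocal objects. Applied to the free--forgetful adjunction \eqref{free-forget} with $\mathcal{S}=\caK^C$, and recalling that an object of $\caC^C$ is $\caK^C$-colocal exactly when it is colorwise $\caK$-colocal, this gives that $\bL F$ sends colorwise $\caK$-colocal objects to $\bL F(\caK^C)$-colocal $\caO$-algebras, with no appeal to cellularity. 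I will also use that $\bL F(\caK^C)$-colocal objects are closed under weak equivalences and homotopy colimits, which is formal.

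For part~(i), let $\cA$ be an underlying colorwise $\caK$-colocal $\caO$-algebra. After replacing $\cA$ by a weakly equivalent cofibrant $\caO$-algebra (which preserves both being underlying colorwise $\caK$-colocal and being $\bL F(\caK^C)$-colocal) and arranging, as in the proof of Lemma~\ref{lemma:underlyingcolocal}, that $\caO$ has an underlying cofibrant collection, Proposition~\ref{proposition:rcn}(i) makes every $(FU)^k\cA$ underlying cofibrant, so the standard simplicial object $\cA_\bullet$ with $\cA_n=F\bigl(U(FU)^n\cA\bigr)$ is already the derived one (cf.\ the remark after Lemma~\ref{lemma:hocolimmap}). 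An induction on $n$ with inductive step Lemma~\ref{lemma:underlyingcolocal} shows that $U(FU)^n\cA$ is colorwise $\caK$-colocal for every $n$; hence each $\cA_n=\bL F\bigl(U(FU)^n\cA\bigr)$ is $\bL F(\caK^C)$-colocal by the first paragraph. By Lemma~\ref{lemma:hocolimmap}, $\cA$ is the homotopy colimit over $\Delta^{\mathrm{op}}$ of $\cA_\bullet$, hence a homotopy colimit of $\bL F(\caK^C)$-colocal objects, and therefore $\bL F(\caK^C)$-colocal.

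For part~(ii), the observation is that the class of underlying colorwise $\caK$-colocal $\caO$-algebras contains $\bL F(\caK^C)$ (Lemma~\ref{lemma:underlyingcolocal}) and is closed under weak equivalences, retracts, and homotopy colimits (Lemma~\ref{lemma:hocolimunderlying}); it therefore contains the smallest class with these closure properties that contains $\bL F(\caK^C)$, i.e.\ every $\bL F(\caK^C)$-cellular $\caO$-algebra. The hypothesis that $\Alg_{\caO}(\caC)$ admits a \emph{good} $\bL F(\caK^C)$-colocalization is used exactly to upgrade ``cellular'' to ``colocal'': goodness guarantees that every $\bL F(\caK^C)$-colocal object is $\bL F(\caK^C)$-cellular --- equivalently, that the colocalization $C\cA\to\cA$ of an arbitrary $\caO$-algebra can be built from the cells $\bL F(\caK^C)$ by homotopy colimits --- which is the converse of the always-valid implication ``cellular $\Rightarrow$ colocal''. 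Combining the two statements shows that every $\bL F(\caK^C)$-colocal $\caO$-algebra is underlying colorwise $\caK$-colocal.

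The genuinely delicate point is this last step of part~(ii): the passage from ``$\bL F(\caK^C)$-colocal'' to ``$\bL F(\caK^C)$-cellular'' fails for a general set of objects in a general model category, and it is precisely what the goodness hypothesis provides; the write-up must quote the relevant structural property of good (right Bousfield) colocalizations from the appendix rather than reprove it. Part~(i), by contrast, is formal once Lemmas~\ref{lemma:underlyingcolocal} and~\ref{lemma:hocolimmap} are available, the only care being to arrange (via a cofibrant replacement of $\cA$ and an underlying cofibrant $\caO$) that each simplicial degree is honestly $\bL F$ applied to a cofibrant, colorwise $\caK$-colocal object.
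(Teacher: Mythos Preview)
Your proof is correct and follows essentially the same approach as the paper: for (i) you use the standard simplicial resolution, Lemma~\ref{lemma:underlyingcolocal} inductively, Lemma~\ref{lemma:general}(ii), and Lemma~\ref{lemma:hocolimmap}; for (ii) you use Lemma~\ref{lemma:underlyingcolocal}, Lemma~\ref{lemma:hocolimunderlying}, and the goodness hypothesis to identify colocal with cellular. The only difference is that you spell out the induction in part~(i) more explicitly than the paper does.
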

\begin{proof}
Again we may assume that $\caO$ has an underlying cofibrant collection since $\caC$ is a simplicial symmetric monoidal model category.

To prove part (i) we may assume $\cA$ is cofibrant. 
Let $\cA_\bullet \to \cA$ be the associated augmented standard simplicial object. 
As in the proof of Lemma \ref{lemma:hocolimunderlying} it follows that $\cA_n$ has the correct homotopy type for every $n$, 
i.e., each $\cA_n$ is weakly equivalent to $((\bL F) U)^{n+1}(\cA)$. 
By Lemma \ref{lemma:hocolimmap}, $\hocolim \cA_\bullet \to \cA$ is a weak equivalence. 
Each $\cA_n$ is $F(\caK^C)$-colocal by Lemmas~\ref{lemma:underlyingcolocal} and~\ref{lemma:general}(ii).
Thus $\cA$ is $F(\caK^C)$-colocal.

For part (ii), note that if $X$ in $\caC^C$ is colorwise $\caK$-colocal,
then $\bL F(X)$ is underlying colorwise $\caK$-colocal by Lemma \ref{lemma:underlyingcolocal}. 
We conclude from Lemma \ref{lemma:hocolimunderlying} since by assumption the $F(\caK^C)$-colocal objects are generated under homotopy colimits by $F(\caK^C)$.
\end{proof}

\begin{theorem} 
\label{theorem:colocalcommute}
Let $\caO$ be a strongly admissible $C$-colored operad in a cofibrantly generated simplicial symmetric monoidal model category $\caC$. 
Let $\caK$ be a tensor-closed set of isomorphism classes of objects of $\Ho(\caC)$.
Suppose $\Alg_{\caO}(\caC)$ has a good $\bL F(\caK^C)$-colocalization and $\caO(c_1,\ldots, c_n;c)\otimes -$ preserves $\caK$\nobreakdash-colocal objects for all
$(c_1,\ldots,c_n,c)$, $n\ge 0$.
If $\cA'\to \cA$ is an $\bL F(\caK^C)$-colocalization of $\cA$ in $\Alg_{\caO}(\caC)$, 
then $U(\cA') \to U(\cA)$ is a $\caK^C$\nobreakdash-colocalization in $\caC^C$.
\end{theorem}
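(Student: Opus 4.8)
The plan is to verify directly the two conditions defining a $\caK^C$-colocalization: that the source $U(\cA')$ is $\caK^C$-colocal, and that $U(\cA')\to U(\cA)$ is a $\caK^C$-colocal equivalence. The first is already in hand. Since $\cA'\to\cA$ is an $\bL F(\caK^C)$-colocalization, $\cA'$ is $\bL F(\caK^C)$-colocal, and the hypotheses of the theorem are exactly those needed to apply Proposition~\ref{proposition:colocalunderlying}(ii) (strong admissibility of $\caO$, a good $\bL F(\caK^C)$-colocalization on $\Alg_{\caO}(\caC)$, and that each $\caO(c_1,\ldots,c_n;c)\otimes-$ preserves $\caK$-colocal objects); that proposition gives that $\cA'$ is underlying colorwise $\caK$-colocal, i.e.\ $U(\cA')$ is $\caK^C$-colocal in $\caC^C$.

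For the second condition I would pass through the free--forgetful adjunction $F\colon\caC^C\rightleftarrows\Alg_{\caO}(\caC)\colon U$ of \eqref{free-forget}. It is a Quillen adjunction for the transferred model structure, and it is simplicial: by construction of the cotensor on $\Alg_{\caO}(\caC)$ one has $U(\cA^K)=(U\cA)^K$, so $U$ is a simplicial functor and hence the adjunction is enriched. Because $U$ already preserves weak equivalences and (being a right adjoint) fibrant objects, a fibrant replacement of an $\caO$-algebra maps under $U$ to a fibrant replacement, and the enriched adjunction descends to a natural weak equivalence $\bR\map_{\caC^C}(K,U(-))\simeq\bR\map_{\Alg_{\caO}(\caC)}(\bL F(K),-)$. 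Taking $K$ to be a cofibrant representative of an element of $\caK^C$, so that $\bL F(K)=F(K)$ lies among the generating objects of the $\bL F(\caK^C)$-colocal class, the colocal-equivalence property of $\cA'\to\cA$ gives that $\bR\map_{\Alg_{\caO}(\caC)}(F(K),\cA')\to\bR\map_{\Alg_{\caO}(\caC)}(F(K),\cA)$ is a weak equivalence, hence so is $\bR\map_{\caC^C}(K,U(\cA'))\to\bR\map_{\caC^C}(K,U(\cA))$. Since this holds for every $K\in\caK^C$, the map $U(\cA')\to U(\cA)$ is a $\caK^C$-colocal equivalence; together with the first condition it is therefore a $\caK^C$-colocalization. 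If one wishes to test the equivalence against all $\caK^C$-colocal objects rather than just the generating set, note that the class of $W$ with $\bR\map_{\caC^C}(W,U(\cA'))\xrightarrow{\sim}\bR\map_{\caC^C}(W,U(\cA))$ is closed under homotopy colimits and retracts and contains $\caK^C$.

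The substantive work has already been done upstream: commuting the forgetful functor past realizations and homotopy colimits (Corollary~\ref{corollary:bgfrd}, Lemma~\ref{lemma:hocolimUweakequivalence}), controlling the underlying homotopy type of the iterated free-algebra resolution, and thereby identifying the $\bL F(\caK^C)$-colocal objects with the underlying colorwise $\caK$-colocal ones in Proposition~\ref{proposition:colocalunderlying} --- this is where strong admissibility and the preservation hypothesis on $\caO(c_1,\ldots,c_n;c)\otimes-$ are spent. For the theorem itself the main thing to be careful about is the homotopical meaning of the free--forgetful adjunction isomorphism on mapping spaces, i.e.\ that $F\dashv U$ is a simplicial Quillen pair and that $\bL F$ carries the colocal generators $\caK^C$ to the colocal generators $\bL F(\caK^C)$; beyond that the argument is bookkeeping.
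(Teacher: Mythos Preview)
Your proposal is correct and follows essentially the same route as the paper. The paper's proof invokes Proposition~\ref{proposition:colocalunderlying}(ii) for the first condition (exactly as you do) and then cites Lemma~\ref{lemma:general}(ii) for the second; your argument via the simplicial Quillen adjunction and derived mapping spaces is precisely the content of that lemma (whose proof in the paper reads ``Both statements follow by using derived adjunctions''), so you have simply unpacked the citation rather than taken a different path.
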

\begin{proof}
By Proposition \ref{proposition:colocalunderlying}(ii) the object $U(\cA')$ is $\caK^C$-colocal, and by Lemma \ref{lemma:general}(ii) the map $U(\cA') \to U(\cA)$ is a $\caK^C$-colocal equivalence.
\end{proof}

\begin{remark}
Theorem \ref{theorem:colocalcommute} implies Proposition \ref{proposition:colocalunderlying}(i) provided $\Alg_{\caO}(\caC)$ acquires a good $\bL F(\caK^C)$-colocalization. 
If $\caC^C$ has a good $\caK^C$-colocalization, 
the theorem states that for a cofibrant replacement $\cA'\to \cA$ in $\Alg_{\caO}(\caC)^{\bL F(\caK^C)}$ the map $U(\cA')\to U(\cA)$ is a cofibrant replacement in $(\caC^C)^{\caK^C}$.
\end{remark}

\begin{proposition}
If either of the model structures $\Alg_{\caO}(\caC)^{\bL F(\caK^C)}$ or $\Alg_{\caO}(\caC^{\caK})$ exists,
then so does the other and they coincide.
\end{proposition}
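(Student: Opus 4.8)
The plan is to compare the two model structures by showing they have the same cofibrations, fibrant objects, and weak equivalences, which suffices since a model structure on a fixed underlying category is determined by any two of these three classes (here cofibrations and weak equivalences). Both constructions are left Bousfield (co)localizations — $\Alg_{\caO}(\caC)^{\bL F(\caK^C)}$ is the right Bousfield colocalization of $\Alg_{\caO}(\caC)$ at the set $\bL F(\caK^C)$ of objects, while $\Alg_{\caO}(\caC^{\caK})$ is the category of $\caO$-algebras in the right Bousfield colocalization $\caC^{\caK}$ of $\caC$ at $\caK$, equipped with its transferred model structure along the free–forgetful adjunction \eqref{free-forget}. First I would record that both share the same underlying category $\Alg_{\caO}(\caC)$ and the same class of cofibrations: for $\Alg_{\caO}(\caC)^{\bL F(\caK^C)}$ this is immediate from the definition of right Bousfield localization (cofibrations are unchanged), and for $\Alg_{\caO}(\caC^{\caK})$ the transferred cofibrations are those maps whose underlying map in $(\caC^{\caK})^C$ has the left lifting property against trivial fibrations; since the colocalization $\caC \to \caC^{\caK}$ leaves cofibrations and fibrant objects unchanged but adds trivial fibrations, one checks these agree with the cofibrations transferred from $\caC^C$, i.e.\ the ordinary $\caO$-algebra cofibrations. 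So both are colocalizations of the \emph{same} starting model structure $\Alg_{\caO}(\caC)$.

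Next I would identify the weak equivalences. In $\Alg_{\caO}(\caC^{\caK})$ a weak equivalence is, by construction of the transferred structure, a map of $\caO$-algebras whose underlying map in $(\caC^{\caK})^C$ is a weak equivalence, i.e.\ a colorwise $\caK$-colocal equivalence in $\caC$. In $\Alg_{\caO}(\caC)^{\bL F(\caK^C)}$ the weak equivalences are the $\bL F(\caK^C)$-colocal equivalences, i.e.\ maps inducing weak equivalences on $\map(\bL F(K), -)$ for all $K \in \caK^C$. The heart of the argument is the adjunction isomorphism $\map_{\Alg_{\caO}(\caC)}(\bL F(K), \cA) \simeq \map_{\caC^C}(K, U\cA)$ (derived adjunction for \eqref{free-forget}), which immediately shows that $\cA' \to \cA$ is an $\bL F(\caK^C)$-colocal equivalence if and only if $U(\cA') \to U(\cA)$ is a $\caK^C$-colocal equivalence in $\caC^C$ — and this is exactly the statement of Lemma~\ref{lemma:general}(ii) as invoked in the proof of Theorem~\ref{theorem:colocalcommute}. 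Hence the two classes of weak equivalences on $\Alg_{\caO}(\caC)$ coincide.

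Having matched cofibrations and weak equivalences, the two model structures coincide, and each exists precisely when the other does — the common obstruction being the existence of the colocalization, which is guaranteed on one side by the hypothesis (a good $\bL F(\caK^C)$-colocalization, or a good $\caK$-colocalization of $\caC$). More carefully, I would argue the implication in both directions: if $\Alg_{\caO}(\caC)^{\bL F(\caK^C)}$ exists, then $\caC^C$ inherits a good $\caK^C$-colocalization via the forgetful functor (using Theorem~\ref{theorem:colocalcommute}, whose hypotheses are in force), hence $\caC^{\caK}$ exists and one transfers along \eqref{free-forget}, checking the transfer succeeds because the generating (trivial) cofibrations and smallness conditions are inherited from $\caC^{\caK}$; conversely, if $\caC^{\caK}$ and hence $\Alg_{\caO}(\caC^{\caK})$ exist, the latter visibly satisfies the universal property of the right Bousfield colocalization of $\Alg_{\caO}(\caC)$ at $\bL F(\caK^C)$ — its fibrant objects are the $\bL F(\caK^C)$-colocal objects by Proposition~\ref{proposition:colocalunderlying}, which characterizes them as the underlying colorwise $\caK$-colocal algebras. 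The main obstacle I anticipate is the bookkeeping around the transferred model structure on $\Alg_{\caO}(\caC^{\caK})$: one must verify that admissibility of $\caO$ is preserved when passing from $\caC$ to its colocalization $\caC^{\caK}$ (so that the transfer along \eqref{free-forget} genuinely produces a model category), and that the resulting fibrant objects match the $\bL F(\caK^C)$-colocal objects rather than merely containing them; both should follow from Proposition~\ref{proposition:colocalunderlying} together with the fact that $\caC^{\caK}$ has the same cofibrations and the same underlying-cofibrant objects as $\caC$, but this requires care with the hypothesis that $\caO(c_1,\ldots,c_n;c) \otimes -$ preserves $\caK$-colocal objects.
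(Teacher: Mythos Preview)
Your identification of the weak equivalences via the derived adjunction (Lemma~\ref{lemma:general}(ii)) is correct and is exactly what the paper does. The problem lies in the other half of your argument: you compare \emph{cofibrations}, asserting that right Bousfield localization leaves cofibrations unchanged. This is false. By definition (see the paper's appendix), the colocalized model structure $\caC^{\caK}$ keeps the \emph{fibrations} of $\caC$ and enlarges the weak equivalences to the $\caK$-colocal equivalences; consequently there are \emph{more} trivial fibrations, hence \emph{fewer} cofibrations, than in $\caC$. Your claim that ``the colocalization $\caC \to \caC^{\caK}$ leaves cofibrations and fibrant objects unchanged'' has it backwards, and the subsequent attempt to match cofibrations transferred from $\caC^C$ with those transferred from $(\caC^{\caK})^C$ fails for the same reason. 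Likewise, the later remark that ``its fibrant objects are the $\bL F(\caK^C)$-colocal objects'' confuses fibrant with cofibrant: in a right Bousfield localization the fibrant objects are unchanged, while the cofibrant objects are the colocal ones.

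The fix is simple and is precisely the paper's argument: compare \emph{fibrations} and weak equivalences. In $\Alg_{\caO}(\caC)^{\bL F(\caK^C)}$ the fibrations are those of $\Alg_{\caO}(\caC)$, i.e.\ maps whose underlying map is a fibration in $\caC^C$. In $\Alg_{\caO}(\caC^{\caK})$ the fibrations are transferred from $(\caC^{\caK})^C$, and since $\caC$ and $\caC^{\caK}$ have the same fibrations, these classes coincide. Together with the weak-equivalence identification you already have, this finishes the proof in two lines.
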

\begin{proof}
It suffices to check that the fibrations and weak equivalences coincide. 
For the fibrations, note that the model structures on the algebras are transferred from $\caC$ and $\caC^{\caK}$, for the same classes of fibrations. 
For the weak equivalences we use Lemma~\ref{lemma:general}(ii). 
\end{proof}

\begin{remark}
The model structure $\Alg_{\caO}(\caC)^{\bL F(\caK^C)}$ exists if $\Alg_{\caO}(\caC)$ is right proper, 
e.g., when $\caC$ is right proper.
We also remark that $\Alg_{\caO}(\caC^{\caK})$ exists if the colocalized model structure $\caC^{\caK}$ can be transferred. 
\end{remark}

\subsection{Localization of algebras}
In this section we show that tensor-closed $\caS$\nobreakdash-localization functors preserve algebras over cofibrant $C$-colored operads. 
More precisely, we prove that if $\caO$ is a cofibrant $C$-colored operad and $f$ an $\bL F(\caS)$-localization in $\Alg_{\caO}(\caC)$,
then $U(f)$ is an $\caS$\nobreakdash-localization in $\caC$. 

If $\caS$ is a set of homotopy classes of maps in $\caC$ and $C$ is a set of colors, we denote by $\caS^C$ the set $\prod_{c\in C}\caS$.
Note that a map in $\caC^C$ is an $\caS^C$-local equivalence if and only if it is colorwise an $\caS$-local equivalence.

\begin{lemma}
\label{lemma:FU_loc}
Let $\caO$ be a strongly admissible $C$-colored operad in a cofibrantly generated simplicial symmetric monoidal model category $\caC$.
Suppose $\caS$ is set of homotopy classes of maps such that $\caS$-equivalences are tensor-closed.
If $g$ is colorwise an $\caS$-equivalence, then $F(g)$ is underlying colorwise an $\caS$-equivalence.
\end{lemma}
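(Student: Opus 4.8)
The plan is to follow the proof of Lemma~\ref{lemma:underlyingcolocal}, with the closure properties of the class of $\caK$-colocal objects replaced throughout by the corresponding closure properties of the class of $\caS$-equivalences. First I would reduce to the situation where $\caO$ has an underlying cofibrant $C$-colored collection: since $\caC$ is simplicial, Remark~\ref{rem:cofibrant_unit} lets us assume $\caO$ is strongly monoidal and satisfies Definition~\ref{def:strongly_admissible}(i), and the Quillen equivalence $\Alg_{\caO'}(\caC)\simeq\Alg_{\caO}(\caC)$ packaged into strong admissibility lets us replace $\caO$ by $\caO'$. I would also assume, by homotopy invariance, that $g\colon X\to Y$ is a map between cofibrant objects of $\caC^C$, so that $F(g)$ computes $\bL F(g)$; the general case follows by cofibrant replacement in the arrow category.

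Next I would expand, for each $c\in C$, the map $F(g)(c)$ using the explicit formula
$$
F(X)(c)=\coprod_{n\ge 0}\Big(\coprod_{d\in C^n}\caO(d_1,\ldots,d_n;c)\otimes_{\Sigma_n}X(d_1)\otimes\cdots\otimes X(d_n)\Big),
$$
so that $F(g)(c)$ is the coproduct over $n\ge 0$ of the $\Sigma_n$-homotopy orbits of the maps $\coprod_{d\in C^n}\caO(d_1,\ldots,d_n;c)\otimes g(d_1)\otimes\cdots\otimes g(d_n)$. Here cofibrancy of the underlying collection of $\caO$ together with Proposition~\ref{proposition:rcn}(i) guarantees that each of these coproducts is a homotopy coproduct and each $\otimes_{\Sigma_n}$ a homotopy orbit. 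Since $\caS$-equivalences are closed under homotopy colimits (they are detected by $\map(-,W)$ for $W$ ranging over $\caS$-local objects, and homotopy limits of weak equivalences are weak equivalences), it suffices to show that for every fixed $n\ge 0$ and $d\in C^n$ the map
$$
\caO(d_1,\ldots,d_n;c)\otimes X(d_1)\otimes\cdots\otimes X(d_n)\longrightarrow \caO(d_1,\ldots,d_n;c)\otimes Y(d_1)\otimes\cdots\otimes Y(d_n)
$$
is an $\caS$-equivalence.

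Finally I would factor this map as a composite of $n$ maps, the $i$-th of which tensors the $\caS$-equivalence $g(d_i)\colon X(d_i)\to Y(d_i)$ with the fixed cofibrant object $\caO(d_1,\ldots,d_n;c)\otimes Y(d_1)\otimes\cdots\otimes Y(d_{i-1})$ on the left and $X(d_{i+1})\otimes\cdots\otimes X(d_n)$ on the right. By the hypothesis that $\caS$-equivalences are tensor-closed, tensoring an $\caS$-equivalence on either side with a cofibrant object is again an $\caS$-equivalence, so each of the $n$ maps in the composite is an $\caS$-equivalence, hence so is the composite by two-out-of-three; the $n=0$ term is an identity. Reassembling the homotopy coproducts and $\Sigma_n$-orbits then shows $F(g)(c)$ is an $\caS$-equivalence for every $c$, which is the claim. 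The point requiring care is the familiar one (compare Lemma~\ref{lemma:underlyingcolocal}): one must make sure that all the point-set constructions — coproducts, symmetric-group orbits, and the individual tensor factors — actually model the corresponding homotopy colimits and derived tensor products, which is exactly what cofibrancy of $\caO$'s underlying collection and Proposition~\ref{proposition:rcn}(i) provide; once that is in place the argument is a purely formal consequence of tensor-closedness of $\caS$-equivalences and their stability under homotopy colimits.
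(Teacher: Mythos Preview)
Your proposal is correct and follows essentially the same approach as the paper: expand $UF(g)$ via the explicit free-algebra formula, use tensor-closedness of $\caS$-equivalences on each summand, and conclude by closure of $\caS$-equivalences under homotopy colimits and coproducts. Your version is more careful than the paper's about the preliminary reduction to an underlying cofibrant collection and about verifying that the point-set coproducts and $\Sigma_n$-orbits compute the correct homotopy colimits, and you spell out the factorization of $g(d_1)\otimes\cdots\otimes g(d_n)$ as a composite of one-variable $\caS$-equivalences, whereas the paper simply invokes tensor-closedness at once; but the argument is the same in substance.
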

\begin{proof}
Let $g\colon \cA\to \cB$ be a map in $\caC$. Then $UF(g)$ is the map 
$$
\xymatrix{
\coprod\limits_{n\ge 0}\left(\coprod\limits_{c_1,\dots, c_n\in C}\mathcal{O}(c_1,\ldots, c_n;c)
\otimes_{\Sigma_n} \cA(c_1)\otimes\cdots\otimes \cA(c_n)\right) \ar[d] \\
\coprod\limits_{n\ge 0}\left(\coprod\limits_{c_1,\dots, c_n\in C}\mathcal{O}(c_1,\ldots, c_n;c)
\otimes_{\Sigma_n} \cB(c_1)\otimes\cdots\otimes \cB(c_n)\right).
}
$$
By assumption, the map $\cA(c_1)\otimes\cdots\otimes \cA(c_n)\to \cB(c_1)\otimes\cdots\otimes \cB(c_n)$ is an $\caS$-local equivalence for every $n$-tuple $(c_1,\ldots, c_n)$,
and tensoring with $\caO(c_1,\ldots, c_n)$ preserves this property.
The result follows by using that $\caS$-local equivalences are closed under homotopy colimits and coproducts.
\end{proof}
\begin{remark}
The assumptions of the theorem are automatically satisfied if, for instance, the functor $X\otimes^{\bL}-$ preserve $\caS$-local equivalences for all $X$ in $\caC$.
\end{remark}
\begin{theorem}
\label{theorem:lkjoij}
Let $\caO$, $C$, $\caC$, and $\caS$  be as above and suppose in addition that $\Alg_{\caO}(\caC)$ has a good $\bL F(\caS^C)$-localization.
If $\cA\to \cA'$ is an $\bL F(\caS^C)$-localization in $\Alg_{\caO}(\caC)$, then $U(\cA) \to U(\cA')$ is an $\caS^C$\nobreakdash-localization in $\caC^C$.
\end{theorem}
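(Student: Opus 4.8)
The plan is to verify the two defining properties of an $\caS^{C}$-localization for $U(\cA)\to U(\cA')$ separately: that $U(\cA')$ is $\caS^{C}$-local, and that $U(\cA)\to U(\cA')$ is an $\caS^{C}$-local equivalence. As in the proofs of Lemmas~\ref{lemma:hocolimUweakequivalence}--\ref{lemma:hocolimunderlying}, I would first use that $\caC$ is simplicial together with strong admissibility and Remark~\ref{rem:cofibrant_unit} to reduce to the case that $\caO$ has an underlying cofibrant $C$-colored collection, transporting the good $\bL F(\caS^{C})$-localization and the Quillen equivalence of Definition~\ref{def:strongly_admissible} along this reduction. Since $\caO$ is admissible and $\caC$ is simplicial, $\Alg_{\caO}(\caC)$ is a simplicial model category, so the Quillen adjunction $F\dashv U$ of~\eqref{free-forget} supplies, for fibrant $Z$, a natural weak equivalence of derived mapping spaces $\map_{\Alg_{\caO}(\caC)}(\bL F(X),Z)\simeq\map_{\caC^{C}}(X,U(Z))$. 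Feeding in the generating maps $\bL F(s)$, $s\in\caS^{C}$, and using that $\cA'$ is $\bL F(\caS^{C})$-local by construction, one concludes that $\map_{\caC^{C}}(-,U(\cA'))$ sends each $s\in\caS^{C}$ to a weak equivalence, i.e.\ $U(\cA')$ is $\caS^{C}$-local. (The same computation yields the localization counterpart of Proposition~\ref{proposition:colocalunderlying}: an $\caO$-algebra is $\bL F(\caS^{C})$-local exactly when its underlying object is colorwise $\caS$-local.)

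For the local equivalence the argument must be more hands-on than in the colocalization case, because $U$ does not preserve homotopy pushouts. I would present $\cA\to\cA'$ (with $\cA$ cofibrant, which we may assume) as a relative cell complex for the generating trivial cofibrations of the localized model structure $\Alg_{\caO}(\caC)^{\bL F(\caS^{C})}$, which may be taken to be the generating trivial cofibrations of $\Alg_{\caO}(\caC)$ together with the pushout--product horns built from the maps $\bL F(s)$, $s\in\caS^{C}$, and the boundary inclusions $\partial\Delta^{n}\hookrightarrow\Delta^{n}$. Using $F(Z\otimes K)\cong F(Z)\otimes K$ for the simplicial structures and that $F$ preserves pushouts, these horns are $F$ applied to horns on cofibrant models of the $s$ in $\caC^{C}$, hence to cofibrations that are colorwise $\caS$-equivalences. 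Now $U$ of a pushout of a generating trivial cofibration of $\Alg_{\caO}(\caC)$ is a weak equivalence, since $U$ preserves weak equivalences; and $U$ of a pushout of $F(u)$ along an arbitrary map of $\caO$-algebras, with $u$ a cofibration and colorwise $\caS$-equivalence, is treated by the standard filtration of a pushout of a free $\caO$-algebra map --- whose layers are pushouts of maps of the form $\caO(c_1,\ldots,c_n;c)\otimes_{\Sigma_{n}}(\text{iterated pushout--products of }u\text{ with the relevant underlying objects})$, exactly as in the proof of Lemma~\ref{lemma:FU_loc}. Since $\caS$-equivalences are tensor-closed and closed under coproducts and homotopy colimits, each layer, and hence $U$ of the pushout, is a colorwise $\caS$-equivalence. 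Assembling the cell attachments (and using that $U$ preserves the transfinite composition, being filtered-colimit preserving) exhibits $U(\cA)\to U(\cA')$ as a transfinite composition of pushouts of cofibrations that are colorwise $\caS$-equivalences, so it is a colorwise $\caS$-equivalence, i.e.\ an $\caS^{C}$-local equivalence. Alternatively, one may package the local-object and local-equivalence steps through the localization counterpart of Proposition~\ref{proposition:colocalunderlying} and Lemma~\ref{lemma:general}(ii), as in the proof of Theorem~\ref{theorem:colocalcommute}.

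The hard part is the pushout step: showing that $U$ carries a pushout of a free-algebra map $\bL F(u)$ --- along an arbitrary map of $\caO$-algebras --- to a colorwise $\caS$-equivalence whenever $u$ is one. This is the $\caS$-local refinement of the pushout--product bookkeeping behind the transferred model structure on $\Alg_{\caO}(\caC)$: it requires that iterated pushout--products of a cofibration that is a colorwise $\caS$-equivalence are again $\caS$-equivalences, and that tensoring these with the cofibrant collection $\caO$ and passing to homotopy $\Sigma_{n}$-orbits preserves $\caS$-equivalences --- which is exactly where tensor-closedness of $\caS$-equivalences and their closure under homotopy colimits, the two ingredients of Lemma~\ref{lemma:FU_loc}, are used.
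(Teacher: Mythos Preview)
Your first step, that $U(\cA')$ is $\caS^{C}$-local via the derived adjunction, matches the paper exactly (this is Lemma~\ref{lemma:general}(i), not (ii) as in your final aside). For the local-equivalence step, however, the paper takes a genuinely different route: rather than presenting $\cA\to\cA'$ as a relative cell complex in the localized model and analysing $U$ on each attachment, it uses the bar resolution. One takes a fibrant replacement $U\cA\to\widehat{U\cA}$ in $(\caC^{C})_{\caS^{C}}$, lifts it to a map of $\caO$-algebras $\cA\to\cB$ via~\cite[Theorem~5.7]{Gut12}, and then compares the standard simplicial objects $(FU)^{\bullet+1}\cA$ and $(FU)^{\bullet+1}\cB$. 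Lemma~\ref{lemma:FU_loc} (which only computes $UF(g)$ explicitly---there is no pushout filtration in its proof) shows each level map is an $F(\caS^{C})$-equivalence; Lemma~\ref{lemma:hocolimmap} then gives that $\cA\to\cB$ is an $F(\caS^{C})$-equivalence, and a diagram chase finishes. The advantage of the paper's approach is that it never needs to understand $U$ on a pushout of free maps: the only input is the transparent formula for $UF$.

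Your cell-decomposition approach is a legitimate alternative (it is essentially the strategy in~\cite{WY2}), but your sketch is imprecise at the crucial point. The filtration layers of $U(B\amalg_{FX}FY)$ are pushouts along maps of the form $\caO_{B}(c_{1},\ldots,c_{n};c)\otimes_{\Sigma_{n}}(\text{pushout-products of }u)$, where $\caO_{B}$ is the \emph{enveloping operad} of $B$, not $\caO$ itself; this is the content of~\cite[\S5]{BM03} and~\cite[\S4]{Spi}. To push your argument through you must argue that tensoring with $\caO_{B}(c_{1},\ldots,c_{n};c)$ preserves $\caS$-equivalences for \emph{every} cofibrant $\caO$-algebra $B$ appearing along the transfinite composition, and that iterated pushout-products of $\caS$-equivalence cofibrations remain $\caS$-equivalences (i.e., that the $\caS$-local model structure on $\caC$ is monoidal). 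Both are plausible under the tensor-closedness hypothesis, but neither is immediate from Lemma~\ref{lemma:FU_loc}, and the first requires an inductive control of the enveloping operad that the paper's bar-resolution argument sidesteps entirely.
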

\begin{proof}
By Lemma~\ref{lemma:general}(i) it follows that $U(\cA')$ is $\caS^C$-local. 
It remains to show the map $U(\cA)\to U(\cA')$ is an $\caS^C$-local equivalence. 
Consider the diagram
$$
\xymatrix{
FU\cA\ar[d] & FUFU\cA\ar[d]\ar[l] &\ar[l]\cdots &(FU)^n\cA\ar[d] \ar[l] &\ar[l]\cdots \\
F(\widehat{U\cA}) & FUF(\widehat{U\cA})\ar[l] &\ar[l]\cdots &(FU)^{n-1}F(\widehat{U\cA}) \ar[l] &\ar[l]\cdots, 
}
$$
where $U\cA\to \widehat{U\cA}$ is a fibrant replacement of $U\cA$ in the localized model category $(\caC^C)_{\caS^C}$. 
The leftmost vertical map is an $F(\caS^C)$-local equivalence by Lemma~\ref{lemma:general}(i). 

By~\cite[Theorem 5.7]{Gut12} the map $U\cA\to \widehat{U\cA}$ coincides with $U(\cA\to \cB)$ for some map of $\caO$-algebras $\cA\to \cB$. 
Lemma~\ref{lemma:FU_loc} shows $UFU\cA\to UFU\cB$ is an $\caS^C$-local equivalence, 
hence $FUFU\cA\to FUFU\cB$ is an $F(\caS^C)$-local equivalence. 
Iterating this argument, it follows that
$$
\cA_n=(FU)^n\cA\longrightarrow (FU)^{n-1}F(\widehat{U\cA})=(FU)^n \cB=\cB_n
$$
is an $F(\caS^C)$-local equivalence. 
Taking homotopy colimits in the previous diagrams yields the commutative square
$$
\xymatrix{
\cA\ar[d] & \hocolim_{\Delta^{op}} \cA_{\bullet} \ar[l]\ar[d] \\
\cB & \hocolim_{\Delta^{op}} \cB_{\bullet}. \ar[l]
}
$$
The right vertical map is an $F(\caS^C)$-local equivalence (a homotopy colimit of $F(\caS^C)$-local equivalences).
The horizontal maps are weak equivalences by Lemma~\ref{lemma:hocolimmap}. 
Hence $\cA\to \cB$ is an $F(\caS^C)$-local equivalence. By repeating the same construction with $\cA'$ instead of $\cA$, we obtain a commutative diagram
$$
\xymatrix{
\cA \ar[r] \ar[d] & \cA'\ar[d] \\
\cB \ar[r] & \cB', 
}
$$
where all four maps are $F(\caS^C)$-local equivalences and $\cA'$, $\cB$ and $\cB'$ are $F(\caS^C)$-local. Hence the left vertical map and the bottom horizontal map are weak equivalences. If we apply the forgetful functor we get a commutative diagram
$$
\xymatrix{
U(\cA) \ar[r] \ar[d] & U(\cA')\ar[d] \\
U(\cB)=\widehat{U(\cA)} \ar[r] & \cB'=\widehat{U(\cA')}, 
}
$$
where the left vertical map is an $\caS^C$-local equivalence and the right vertical and bottom horizontal maps are weak equivalences.
It follows that $U(\cA)\to U(\cA')$ is an $\caS^C$-local equivalence.
\end{proof}

\begin{proposition}
\label{prop:modelstr_localg}
If either of the model structures $\Alg_{\caO}(\caC)_{\bL F(\caS^C)}$ or $\Alg_{\caO}(\caC_{\caS})$ exists,
then so does the other and they coincide.
\end{proposition}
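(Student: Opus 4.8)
The plan is to dualize the proof of the colocalization analogue stated just above. There, because a right Bousfield localization has the same \emph{fibrations} as the ambient model category, the two structures were compared through their fibrations and weak equivalences; here, because a left Bousfield localization has the same \emph{cofibrations} as the ambient model category, I would compare cofibrations and weak equivalences. Since a model structure on a fixed category is determined by any two of its three classes, it then suffices to show that $\Alg_{\caO}(\caC)_{\bL F(\caS^C)}$ and $\Alg_{\caO}(\caC_{\caS})$ have the same cofibrations and the same weak equivalences, and that the existence of one forces the existence of the other.

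For the cofibrations: the category $\Alg_{\caO}(\caC)$ and the free functor $F_{\caO}$ depend only on the symmetric monoidal structure of $\caC$, and $\caC_{\caS}$ has the same generating cofibrations as $\caC$ (a left Bousfield localization alters only weak equivalences and fibrations). Hence the transferred model structure $\Alg_{\caO}(\caC_{\caS})$ has generating cofibrations $F_{\caO}(I_{\caC^C})$, the same as $\Alg_{\caO}(\caC)$, and since $\Alg_{\caO}(\caC)_{\bL F(\caS^C)}$ retains the cofibrations of $\Alg_{\caO}(\caC)$, all three classes of cofibrations coincide.

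For the weak equivalences: a map $f$ of $\caO$-algebras is a weak equivalence in $\Alg_{\caO}(\caC_{\caS})$ exactly when $U(f)$ is an $\caS^C$-local equivalence, so the task is to identify this class with the $\bL F(\caS^C)$-local equivalences. If $\Alg_{\caO}(\caC)_{\bL F(\caS^C)}$ exists it admits a functorial localization, and Theorem~\ref{theorem:lkjoij} says $U$ carries a localization map $\cA \to L\cA$ to an $\caS^C$-localization; a two-out-of-three argument in the comparison square of $f$ with the localization maps of its source and target then shows $f$ is an $\bL F(\caS^C)$-local equivalence if and only if $U(f)$ is an $\caS^C$-local equivalence. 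Conversely, if $\Alg_{\caO}(\caC_{\caS})$ exists, I would feed it into the recognition theorem for left Bousfield localizations: it has the cofibrations of $\Alg_{\caO}(\caC)$; each generator $F(s)$ of $\bL F(\caS^C)$ is a weak equivalence in it by Lemma~\ref{lemma:FU_loc} (an $\caS^C$-equivalence in $\caC^C$ being the same as a colorwise $\caS$-equivalence); and any fibrant object $\cA$ of it is $\bL F(\caS^C)$-local, since $U\cA$ is then $\caS^C$-local and fibrant in $\caC^C$, so $\map_{\Alg_{\caO}(\caC)}(F(s),\cA)\simeq\map_{\caC^C}(s,U\cA)$ is a weak equivalence via the Quillen pair $F\dashv U$. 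The recognition theorem then identifies $\Alg_{\caO}(\caC_{\caS})$ with $\Alg_{\caO}(\caC)_{\bL F(\caS^C)}$, so in particular the latter exists.

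I expect the main obstacle to be exactly the fibration side that this comparison sidesteps: a left Bousfield localization comes with no explicit description of its fibrations, so one cannot directly match the fibrations of $\Alg_{\caO}(\caC)_{\bL F(\caS^C)}$ with the $U$-detected fibrations of $(\caC_{\caS})^C$. The resolution is that the $\caS$-local fibrant objects are still detected by $U$, and the recognition theorem --- together with Theorem~\ref{theorem:lkjoij} --- converts this information about fibrant objects into an identification of the full model structures. Arranging that recognition argument cleanly, and checking that ``exists'' genuinely transfers in both directions, is where the real work lies; everything else is formal.
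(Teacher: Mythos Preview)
Your approach works but takes a substantially longer route than the paper, and in doing so imports hypotheses the proposition does not need. The paper's proof is two lines: it compares \emph{trivial fibrations} and \emph{fibrant objects}. Trivial fibrations agree because $\caC$ and $\caC_{\caS}$ have the same cofibrations, hence the same trivial fibrations, and both algebra model structures are transferred. Fibrant objects agree by Lemma~\ref{lemma:general}(i): an object $Z$ is $\bL F(\caS^C)$-local if and only if $\bR U(Z)$ is $\caS^C$-local, and fibrancy in $\Alg_{\caO}(\caC)$ already means $U(Z)$ is fibrant in $\caC^C$. Since a model structure is determined by its cofibrations together with its fibrant objects, that is the whole argument.

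By contrast, you compare cofibrations and \emph{all} weak equivalences. To identify the weak equivalences you invoke Theorem~\ref{theorem:lkjoij}, which requires $\caO$ to be strongly admissible, $\caC$ to be simplicial, and the $\caS$-equivalences to be tensor-closed --- none of which the paper's proof uses. You also appeal to an unstated recognition theorem for left Bousfield localizations in the converse direction. The obstacle you anticipated (``no explicit description of the fibrations in a left Bousfield localization'') is precisely what the paper sidesteps: rather than describe the fibrations, it compares only the fibrant objects, which Lemma~\ref{lemma:general}(i) handles immediately by adjunction. Your argument is not wrong under the ambient hypotheses of the section, but the paper's is both shorter and applies in greater generality.
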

\begin{proof}
It suffices to check that the trivial fibrations and fibrant objects coincide. 
For the trivial fibrations, 
note that the model structures on the algebras are transferred from $\caC$ and $\caC_{\caS}$ for the same trivial fibrations. 
For the fibrant objects we use Lemma~\ref{lemma:general}(i). 
\end{proof}

\begin{remark}
The model structure $\Alg_{\caO}(\caC)_{\bL F(\caS^C)}$ exists if $\Alg_{\caO}(\caC)$ is left proper. 
We also remark that $\Alg_{\caO}(\caC_{\caS})$ exists if the localized model structure $\caC_{\caS}$ can be transferred. 
\end{remark}

\section{(Co)localization of modules over algebras}
\label{section:proofsofthemainresults}

In the following we shall run similar arguments for modules over a given monoid instead of algebras over a colored operad, 
culminating in analogous statements of Theorem~\ref{theorem:colocalcommute} and Theorem~\ref{theorem:lkjoij}. 
When colocalizing (resp. localizing) a module over a monoid $\cA$ with respect to a tensor-closed set of objects $\caK$ (resp. of morphisms $\caS$) for which $\cA$ is $\caK$-colocal (resp. $\caS$-local), 
one can simply apply Theorem \ref{theorem:colocalcommute} or  Theorem~\ref{theorem:lkjoij} because there exists an operad whose algebras are exactly the modules over the given monoid. 
That is, let $\caO$ be the operad with $\caO(1)=\cA$ and $\caO(i)=\emptyset$ for $i \neq 1$. 
Then the categories of $\caO$-algebras and $\cA$-modules are equivalent.
Furthermore, $\caO$ is strongly admissible if $\cA$ is. 
But in practice, 
e.g., for the motivic slice filtration, 
one wants to colocalize or localize a module with respect to a colocalization or localization functor other than the one for which the monoid is colocal or local.

\subsection{Colocalization of modules}
\label{subsection:gfdgfc}

We first address colocalization of modules over monoids, 
and second colocalization of modules over arbitrary operads.
In the latter case we employ enveloping algebras and restrict to monoids.

\begin{lemma} 
\label{lemma:gtrjg}
Let $\cA$ be a strongly admissible monoid in a symmetric monoidal model category $\caC$. 
Then the forgetful functor $U\colon \Mod(\cA) \to \caC$ preserves homotopy colimits, where $\Mod(\cA)$ denotes the category of $\cA$-modules.
\end{lemma}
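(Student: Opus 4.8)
The plan is to take advantage of a feature special to modules that is unavailable for algebras over a general operad: the forgetful functor $U\colon \Mod(\cA)\to\caC$ preserves all colimits, so the only real work is bookkeeping of cofibrancy. Recall that $\Mod(\cA)=\Alg_{\caO_\cA}(\caC)$ for the operad $\caO_\cA$ with $\caO_\cA(1)=\cA$ and $\caO_\cA(n)=0$ otherwise, and that $\cA$ being a strongly admissible monoid means exactly that $\caO_\cA$ is a strongly admissible operad. The strategy is then: (a) reduce to the case of a cofibrant monoid using strong admissibility; (b) observe that in that case $U$ is a left Quillen functor; (c) conclude, since left Quillen functors preserve homotopy colimits.

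For (a), strong admissibility of $\cA$ provides a weak equivalence $\varphi\colon\cA'\to\cA$ of monoids making $(\caO_\cA,\caO_{\cA'},\varphi)$ a strongly admissible pair, so that $\varphi$ induces a Quillen equivalence $\varphi_!\colon\Mod(\cA')\rightleftarrows\Mod(\cA)\colon\varphi^*$. The restriction functor $\varphi^*$ preserves all weak equivalences (they are the underlying ones) and satisfies $U_{\cA'}\circ\varphi^*=U_\cA$ strictly; hence, under the equivalence of homotopy theories induced by the Quillen equivalence, the derived functor of $U_\cA$ is identified with that of $U_{\cA'}$, and $U_\cA$ preserves homotopy colimits if and only if $U_{\cA'}$ does. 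Now the underlying collection of $\caO_{\cA'}$ is $\cA'$ in arity $1$ and $0$ in all other arities, so Definition~\ref{def:strongly_admissible}(i)--(ii) amounts to saying that either $\cA'$ is cofibrant in $\caC$, or $I\to\cA'$ is a cofibration and $\caC$ admits a second cofibrantly generated symmetric monoidal model structure, with the same weak equivalences and cofibrant unit, in which $\cA'$ becomes cofibrant. Since passing to this second model structure changes neither the weak equivalences nor therefore the homotopy colimits, we may and do assume henceforth that $\cA$ itself is cofibrant.

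It remains to prove (b). First, $U$ preserves colimits: $\Mod(\cA)$ is the category of algebras over the monad $\cA\otimes-$ on $\caC$, which preserves all colimits, so colimits of $\cA$-modules are created by $U$ (the colimit in $\caC$ of a diagram of $\cA$-modules inherits the evident $\cA$-action). Second, $U$ preserves cofibrations and trivial cofibrations: the transferred model structure on $\Mod(\cA)$ is cofibrantly generated with generating (trivial) cofibrations $F(i)=\cA\otimes i$, with $i$ ranging over the generating (trivial) cofibrations of $\caC$; since $\cA$ is cofibrant, $0\to\cA$ is a cofibration, so the pushout-product $\cA\otimes i$ of $0\to\cA$ and $i$ is a cofibration, trivial when $i$ is, and, $U$ being colimit-preserving, it carries relative $F(I)$-cell complexes to relative $I$-cell complexes and retracts to retracts. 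Thus $U$ is left Quillen, and left Quillen functors preserve homotopy colimits, which finishes the proof. The only step that is not formal is (a): one must check that homotopy-colimit preservation is transported across the Quillen equivalence $\Mod(\cA')\simeq\Mod(\cA)$ compatibly with the forgetful functors, and that invoking the auxiliary model structure of Definition~\ref{def:strongly_admissible}(ii) is harmless. Step (b) is routine precisely because, for modules rather than algebras over a general operad, the forgetful functor already preserves colimits on the nose.
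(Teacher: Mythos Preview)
Your proof is correct and follows essentially the same route as the paper: reduce via the Quillen equivalence in the definition of strong admissibility to the case where $\cA$ (or $\cA'$) is cofibrant, and then observe that $U$ is left Quillen. The only cosmetic difference is that the paper verifies the left Quillen condition on the right adjoint side---$\Hom(\cA,-)$ preserves (trivial) fibrations because $\cA$ is cofibrant---whereas you verify it on the left adjoint side by tracking generating (trivial) cofibrations through $U$; these are equivalent one-line checks.
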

\begin{proof}
Since $\cA$ is strongly admissible, 
we may assume its underlying object is cofibrant (case (i) in Definition~\ref{def:strongly_admissible}), or its unit map is a cofibration in $\caC$ (case (ii)). 
In the first case, 
$U$ is a left Quillen functor since its right adjoint given by the internal hom $\Hom(\cA,-)$ preserves fibrations and trivial fibrations, 
so the result follows. 
In the second case, 
the same argument shows that $U$ is a left Quillen functor for the model structure on $\caC$ furnished by the strong admissibility of $\cA$.
\end{proof}

\begin{proposition} 
\label{proposition:colocalunderlying2}
With $\cA$ and $\caC$ as in Lemma \ref{lemma:gtrjg}, 
let $\caK$ be a set of isomorphism classes of objects for $\Ho(\caC)$. 
Suppose $\cA \otimes^\bL \caK$ is underlying $\caK$-colocal.
If $\cM \in \Mod(\cA)$ is underlying $\caK$-colocal, then it is also $\cA \otimes^\bL \caK$-colocal.
If, in addition, $\Mod(\cA)$ has a good $\cA \otimes^\bL \caK$-colocalization, then every $\cA \otimes^\bL \caK$-colocal $\cA$-module is underlying $\caK$-colocal.
\end{proposition}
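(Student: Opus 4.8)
The plan is to mimic the operadic argument developed in Section~\ref{(Co)localizationofalgebrasovercoloredoperads}, specializing to the one-colored operad $\caO_\cA$ whose algebras are $\cA$-modules, but replacing the free $\caO$-algebra functor $F$ by the free $\cA$-module functor $F(X)=\cA\otimes X$. The key structural input is Lemma~\ref{lemma:gtrjg}: the forgetful functor $U\colon\Mod(\cA)\to\caC$ preserves homotopy colimits, which plays the role that Lemma~\ref{lemma:hocolimUweakequivalence} played for algebras. Since $\bL F(X)\simeq \cA\otimes^{\bL}X$ is a much simpler expression than the free $\caO$-algebra on $X$, the two halves of the proposition should follow quickly once the homotopy-colimit machinery is in place.

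First I would prove the first assertion. Let $\cM$ be an underlying $\caK$-colocal $\cA$-module; we may assume $\cM$ is cofibrant. Form the standard simplicial object $\cM_\bullet$ with $\cM_n=(FU)^{n+1}\cM$ and its augmentation $\cM_\bullet\to\cM$. Using Lemma~\ref{lemma:gtrjg} together with the strong admissibility of $\cA$ exactly as in the proof of Lemma~\ref{lemma:hocolimmap}, the augmentation induces a weak equivalence $\hocolim_{\Delta^{\mathrm{op}}}\cM_\bullet\to\cM$, and (being mindful of cofibrant replacements) each $\cM_n$ has the correct homotopy type $((\bL F)U)^{n+1}\cM$. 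Since $U(FU)^n\cM$ is underlying $\caK$-colocal (at bottom because $U\cM$ is $\caK$-colocal and $\cA\otimes^\bL-$ sends $\caK$-colocal objects to underlying $\caK$-colocal objects, by the hypothesis $\cA\otimes^\bL\caK$ underlying $\caK$-colocal together with tensor-closedness of $\caK$), each $\cM_n\simeq\bL F(U(FU)^{n-1}\cM)$ is $\cA\otimes^\bL\caK$-colocal by the analogue of Lemmas~\ref{lemma:underlyingcolocal} and~\ref{lemma:general}(ii). As $\cA\otimes^\bL\caK$-colocal objects are closed under homotopy colimits, $\cM$ is $\cA\otimes^\bL\caK$-colocal. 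For the second assertion, suppose $\Mod(\cA)$ has a good $\cA\otimes^\bL\caK$-colocalization; then its colocal objects are generated under homotopy colimits by the set $\cA\otimes^\bL\caK$. For any $\caK$-colocal $X$ in $\caC$, $\bL F(X)=\cA\otimes^\bL X$ is underlying $\caK$-colocal by hypothesis and tensor-closedness; and by Lemma~\ref{lemma:gtrjg} and the first assertion, underlying $\caK$-colocal $\cA$-modules are closed under homotopy colimits in $\Mod(\cA)$ (their underlying homotopy colimits are computed in $\caC$, where $\caK$-colocal objects are closed under homotopy colimits). Hence every $\cA\otimes^\bL\caK$-colocal $\cA$-module, being built from $\cA\otimes^\bL\caK$ under homotopy colimits, is underlying $\caK$-colocal.

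The step I expect to be the main obstacle is verifying that each $\cM_n$ in the standard resolution has the correct homotopy type, i.e.\ that one is genuinely computing the derived functors. This is the point flagged in the remark after Lemma~\ref{lemma:hocolimmap}: one must interpolate a cofibrant replacement $Q$ in $\caC$ so that degree $n$ is weakly equivalent to $(FQU)^{n+1}\cM$, and check that $U(FU)^n\cM$ (with this correction) is underlying cofibrant and underlying $\caK$-colocal simultaneously. This requires knowing that $U$ preserves underlying cofibrancy in the appropriate sense, which in case~(i) of Definition~\ref{def:strongly_admissible} is immediate (the underlying object of $\cA$ is cofibrant, so $\cA\otimes-$ preserves cofibrant objects) and in case~(ii) follows from the second model structure with cofibrant unit, just as in Lemma~\ref{lemma:hocolimUweakequivalence}. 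Everything else is a direct transcription of the algebra arguments, so the write-up can be kept short by pointing to the proofs of Lemma~\ref{lemma:hocolimunderlying} and Proposition~\ref{proposition:colocalunderlying}.
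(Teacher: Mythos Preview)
Your overall strategy matches the paper's: form the bar resolution $\cM_\bullet$ with $\cM_n=\cA^{\otimes(n+1)}\otimes\cM$, show each term is $\cA\otimes^\bL\caK$-colocal via Lemma~\ref{lemma:general}(ii), and conclude using that $U$ preserves homotopy colimits (Lemma~\ref{lemma:gtrjg}). The second assertion is likewise handled by combining the good-colocalization hypothesis with Lemma~\ref{lemma:gtrjg}.

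There is, however, a genuine gap: you invoke tensor-closedness of $\caK$ in both parts, but this is \emph{not} a hypothesis of the proposition. The remark immediately following this proposition in the paper singles out precisely this feature as the main simplification over Proposition~\ref{proposition:colocalunderlying}. Moreover, even if $\caK$ were tensor-closed, it is unclear how that alone would yield ``$\cA\otimes^\bL-$ sends $\caK$-colocal objects to underlying $\caK$-colocal objects'': tensor-closedness concerns products of two $\caK$-colocal objects, and $\cA$ is not assumed to be $\caK$-colocal.

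The paper fills this step differently. It uses that the $\caK$-colocal objects are generated by $\caK$ under weak equivalences and homotopy colimits. Since $\bL F=\cA\otimes^\bL-$ preserves homotopy colimits (being a left adjoint) and $U$ does as well (Lemma~\ref{lemma:gtrjg}), the hypothesis that each $\cA\otimes^\bL K$ for $K\in\caK$ is underlying $\caK$-colocal propagates to $\cA\otimes^\bL W$ for every $\caK$-colocal $W$; the induction showing $U\cM_{n}$ is $\caK$-colocal then goes through. For the second assertion the argument is even shorter than yours: the good $\cA\otimes^\bL\caK$-colocalization means the colocal $\cA$-modules are generated under homotopy colimits by the objects $\cA\otimes^\bL K$ with $K\in\caK$ themselves (not by $\cA\otimes^\bL X$ for arbitrary $\caK$-colocal $X$), so the bare hypothesis together with Lemma~\ref{lemma:gtrjg} suffices and no appeal to tensor-closedness or to the first assertion is needed.
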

\begin{proof}
We may assume $\cM$ is cofibrant.
It follows, 
using the left Quillen functor $U$ in the proof of Lemma \ref{lemma:gtrjg},
that $\cM$ is underlying cofibrant (in case I), or cofibrant in $\caC$ for the second model structure (in case II). 
Letting $\cM_n=\cA^{\otimes (n+1)} \otimes \cM$, the augmented simplicial $\cA$-module $\cM_\bullet \to \cM$ splits after forgetting the $\cA$-module structure. 
By Lemma \ref{lemma:gtrjg} the natural $\cA$-module map $\hocolim \cM_\bullet \to \cM$ is a weak equivalence. 
Each $\cM_n$ is $\cA \otimes^\bL \caK$-colocal by Lemma \ref{lemma:general}(ii) and the assumption that $\cA \otimes^\bL \caK$ is underlying $\caK$-colocal, since $\caK$-colocal object are generated by taking the closure of $\caK$ under weak equivalences and homotopy colimits. It follows that $\cM$ is $\cA \otimes^\bL \caK$-colocal.

For the second assertion, we use Lemma \ref{lemma:gtrjg}, 
the fact that $\cA\otimes^\bL \caK$-colocal $\cA$-modules are generated under homotopy colimits by $\cA\otimes^\bL \caK$, 
and the assumption that $\cA \otimes^\bL \caK$ is underlying $\caK$-colocal.
\end{proof}

\begin{remark}
Note that since we are dealing with monoids instead of arbitrary operads, we do not assume in Proposition~\ref{proposition:colocalunderlying2} that the set $\caK$ is tensor-closed (cf. Proposition~\ref{proposition:colocalunderlying}).
\end{remark}
\begin{theorem} 
\label{theorem:bhtgrfg}
With $\cA$, $\caC$, and $\caK$ as in Proposition \ref{proposition:colocalunderlying2}, 
suppose that $\cA \otimes^\bL \caK$ is underlying $\caK$-colocal and $\Mod(\cA)$ has a good $\cA \otimes^\bL \caK$-colocalization. 
If $\cM' \to \cM$ is a $\cA \otimes^\bL \caK$-colocalization of $\cM \in \Mod(\cA)$, then $U\cM'\to U\cM$ is a $\caK$-colocalization in $\caC$.
\end{theorem}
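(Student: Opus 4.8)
The plan is to follow the short argument for Theorem~\ref{theorem:colocalcommute} almost verbatim, replacing the operadic inputs of \S\ref{(Co)localizationofalgebrasovercoloredoperads} by their monoid-specific counterparts from \S\ref{subsection:gfdgfc}. Recall that the free $\cA$-module functor $F\colon\caC\to\Mod(\cA)$, $F(X)=\cA\otimes X$, is left adjoint to the forgetful functor $U$, so that $\bL F(\caK)$ is precisely $\cA\otimes^{\bL}\caK$. Under this identification the hypotheses say exactly that $\bL F(\caK)$ is underlying $\caK$-colocal and that $\Mod(\cA)$ carries a good $\bL F(\caK)$-colocalization. Since $\cM'\to\cM$ is such a colocalization of $\cM$, the source $\cM'$ is $\cA\otimes^{\bL}\caK$-colocal by the very definition of a (good) colocalization, and this is the only place where the ``good'' hypothesis is consumed at this stage.

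First I would show that $U\cM'$ is $\caK$-colocal in $\caC$. This is exactly the second assertion of Proposition~\ref{proposition:colocalunderlying2}: given that $\cA\otimes^{\bL}\caK$ is underlying $\caK$-colocal and that $\Mod(\cA)$ has a good $\cA\otimes^{\bL}\caK$-colocalization, every $\cA\otimes^{\bL}\caK$-colocal $\cA$-module is underlying $\caK$-colocal; and $\cM'$ is such a module by the previous paragraph.

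Second I would show that $U\cM'\to U\cM$ is a $\caK$-colocal equivalence. The map $\cM'\to\cM$ is an $\bL F(\caK)$-colocal equivalence, being part of an $\bL F(\caK)$-colocalization. Since $U$ is a left Quillen functor (Lemma~\ref{lemma:gtrjg}, in either case of Definition~\ref{def:strongly_admissible}), the pair $F\dashv U$ is a Quillen adjunction between $\caC$ and $\Mod(\cA)$, and Lemma~\ref{lemma:general}(ii) applied to it turns the $\bL F(\caK)$-colocal equivalence $\cM'\to\cM$ into a $\caK$-colocal equivalence $U\cM'\to U\cM$ in $\caC$ --- concretely, via the adjunction identification of the mapping space out of $\cA\otimes^{\bL}\caK$ in $\Mod(\cA)$ with the mapping space out of $\caK$ in $\caC$. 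Combining the two steps, $U\cM'\to U\cM$ has $\caK$-colocal source and is a $\caK$-colocal equivalence, hence is a $\caK$-colocalization of $U\cM$, which is the assertion.

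I do not anticipate a deep obstacle: the real technical work is already isolated in Proposition~\ref{proposition:colocalunderlying2} (whose proof uses the bar-type resolution $\cM_\bullet$, Lemma~\ref{lemma:gtrjg}, and the generation of colocal objects) and in the abstract Lemma~\ref{lemma:general}(ii). The main point that requires care is the bookkeeping around the identification $\bL F(\caK)=\cA\otimes^{\bL}\caK$ and the handedness of the adjunction feeding Lemma~\ref{lemma:general}(ii) --- i.e.\ checking that the same derived free functor and the same good-colocalization hypothesis are used in both Proposition~\ref{proposition:colocalunderlying2} and this theorem, and that $\cM'$ is genuinely $\bL F(\caK)$-colocal rather than merely equivalent to such an object. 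Once these match up, the proof is a two-line specialization of the argument for Theorem~\ref{theorem:colocalcommute}.
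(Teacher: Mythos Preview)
Your proposal is correct and matches the paper's own two-line proof: apply the second assertion of Proposition~\ref{proposition:colocalunderlying2} to get that $U\cM'$ is $\caK$-colocal, then invoke Lemma~\ref{lemma:general}(ii) for the free--forgetful adjunction to see that $U\cM'\to U\cM$ is a $\caK$-colocal equivalence. One small point of phrasing: the Quillen adjunction $F\dashv U$ needed for Lemma~\ref{lemma:general}(ii) (with $F$ left Quillen and $U$ right Quillen) comes from the transferred model structure on $\Mod(\cA)$, not from $U$ being \emph{left} Quillen as in Lemma~\ref{lemma:gtrjg}, which concerns the separate adjunction $U\dashv\Hom(\cA,-)$.
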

\begin{proof}
Proposition \ref{proposition:colocalunderlying2} implies that $\cM'$ is underlying $\caK$-colocal. 
Using Lemma \ref{lemma:general}(ii) we conclude that $\cM' \to \cM$ is an underlying $\caK$-colocal equivalence.
\end{proof}

Next we discuss $E_\infty$ operads,
i.e., parameter spaces for multiplication maps that are associative and commutative up to all higher homotopies, 
and their algebras.
For an operad $\caO$ in $\caC$ and an $\caO$-algebra $\cA$ we denote by $\Env_\caO(\cA)$ the enveloping algebra of $\cA$. 
This is a monoid with the property that $\Mod(\cA)\simeq \Mod(\Env_\caO(\cA))$.
For an operad $\caO$ with underlying cofibrant collection, 
$\Alg_{\caO}(\caC)$ has a left semi model structure \cite[Theorem 4.7]{Spi} provided the domains of the generating cofibrations of $\caC$ are small relative to the whole category.

\begin{theorem} 
\label{theorem:bhgzhj}
Let $\caC$ be a cofibrantly generated symmetric monoidal model category with a set $\caK$ of isomorphism classes of objects for $\Ho(\caC)$.
Suppose $\caC$ is left proper, 
its generating cofibrations can be chosen in such a way that their domains are cofibrant and small relative to the whole category, 
and its unit is cofibrant. 
Let $\caO$ be a pointed $E_\infty$ operad in $\caC$. 
Suppose $\cA \in \Alg_{\caO}(\caC)$ is cofibrant, the objects of $\cA \otimes^\bL \caK$ are underlying $\caK$-colocal, and $\Mod(\cA)$ has a good $\Env_\caO(\cA)\otimes^\bL \caK$-colocalization. If $\cM' \to \cM$ is an $\Env_\caO(\cA) \otimes^\bL \caK$-colocalization of $\cM \in \Mod(\cA)$, then $U\cM'\to U\cM$ is a $\caK$-colocalization in $\caC$.
\end{theorem}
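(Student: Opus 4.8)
The plan is to reduce the statement to the case of modules over a \emph{monoid}, that is, to Theorem~\ref{theorem:bhtgrfg}, by passing to the enveloping algebra. Write $\Env := \Env_{\caO}(\cA)$. As recalled above, $\Env$ is a monoid and there is an equivalence of categories $\Mod(\cA) \simeq \Mod(\Env)$ that commutes with the forgetful functors to $\caC$. It therefore suffices to verify that $\Env$ satisfies the hypotheses of Theorem~\ref{theorem:bhtgrfg} for the set $\caK$, and then to match up the colocalizations under this equivalence.

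First I would settle the foundational points. Since $\caO$ is a pointed $E_{\infty}$ operad and the unit of $\caC$ is cofibrant, $\caO$ has an underlying cofibrant collection by the reasoning of Remark~\ref{rem:cofibrant_unit}. Combined with left properness of $\caC$ and the assumption that the generating cofibrations of $\caC$ have cofibrant small domains, this furnishes the left semi model structure on $\Alg_{\caO}(\caC)$ of \cite[Theorem 4.7]{Spi}, and Proposition~\ref{proposition:rcn}(i) then shows the cofibrant $\caO$-algebra $\cA$ is underlying cofibrant. Note also that, since $\Mod(\cA)$ carries a good colocalization by hypothesis, it is in particular a model category; under the equivalence $\Mod(\cA)\simeq\Mod(\Env)$ the monoid $\Env$ is therefore admissible.

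Next come the three hypotheses of Theorem~\ref{theorem:bhtgrfg} for $\Env$. \emph{(a) $\Env$ is a strongly admissible monoid.} Using that $\caO$ has a cofibrant collection and $\cA$ is underlying cofibrant, one checks that the underlying object of $\Env$ is cofibrant (via the analysis of the enveloping-algebra construction over operads with cofibrant collections, cf.~\cite{Spi}; alternatively, replace $\Env$ by a cofibrant monoid mapping weakly equivalently onto it, using that the category of monoids in $\caC$ is a semi model category under the standing hypotheses --- such a replacement has cofibrant underlying object and induces a Quillen equivalence on module categories). Together with the admissibility of $\Env$ noted above, this exhibits $\Env$ as a strongly admissible monoid via case (i) of Definition~\ref{def:strongly_admissible}. \emph{(b) $\Env \otimes^{\bL} \caK$ is underlying $\caK$-colocal.} Here the essential input --- and the reason the $E_{\infty}$ hypothesis is needed --- is that for a cofibrant algebra over an $E_{\infty}$ operad the underlying object of $\Env_{\caO}(\cA)$ is weakly equivalent to the underlying object of $\cA$. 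I would obtain this by reducing, along the standard simplicial resolution of $\cA$, to free $\caO$-algebras on cofibrant objects $X$, for which the enveloping algebra is the explicit coproduct $\coprod_{n \ge 0} \caO(n+1) \otimes_{\Sigma_{n}} X^{\otimes n}$; this is naturally degreewise weakly equivalent to $\coprod_{n \ge 0} \caO(n) \otimes_{\Sigma_{n}} X^{\otimes n}$, the free $\caO$-algebra itself, because restricting the arity-$(n+1)$ component of an $E_{\infty}$ operad along $\Sigma_{n} \subset \Sigma_{n+1}$ produces a cofibrant $\Sigma_{n}$-object equivariantly weakly equivalent to the arity-$n$ component; realizing the resulting zig-zag yields $\Env_{\caO}(\cA) \simeq \cA$ in $\caC$. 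Consequently $\Env \otimes^{\bL} K \simeq \cA \otimes^{\bL} K$ for every $K \in \caK$, and since $\cA \otimes^{\bL} \caK$ is underlying $\caK$-colocal by hypothesis and $\caK$-colocality is invariant under weak equivalence, $\Env \otimes^{\bL} \caK$ is underlying $\caK$-colocal. \emph{(c) $\Mod(\Env)$ has a good $\Env \otimes^{\bL} \caK$-colocalization.} This is precisely the hypothesis that $\Mod(\cA)$ has a good $\Env_{\caO}(\cA) \otimes^{\bL} \caK$-colocalization, transported across $\Mod(\cA) \simeq \Mod(\Env)$.

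With (a)--(c) in hand, Theorem~\ref{theorem:bhtgrfg} applied to the strongly admissible monoid $\Env$ shows that whenever $\cM' \to \cM$ is an $\Env \otimes^{\bL} \caK$-colocalization in $\Mod(\cA) \simeq \Mod(\Env)$, the underlying map $U\cM' \to U\cM$ is a $\caK$-colocalization in $\caC$; since the equivalence $\Mod(\cA) \simeq \Mod(\Env)$ intertwines the forgetful functors to $\caC$, this is the assertion of the theorem. The step I expect to be the real obstacle is (b) together with the cofibrancy half of (a) --- establishing that the enveloping algebra of a cofibrant $E_{\infty}$-algebra is a strongly admissible monoid whose underlying object is weakly equivalent to $\cA$; the remaining work is bookkeeping around the equivalence $\Mod(\cA) \simeq \Mod(\Env_{\caO}(\cA))$ and appeals to the results already proved.
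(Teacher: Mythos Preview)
Your overall strategy is exactly the paper's: reduce to Theorem~\ref{theorem:bhtgrfg} applied to the monoid $\Env_\caO(\cA)$, using $\Mod(\cA)\simeq\Mod(\Env_\caO(\cA))$. The paper's proof is much more economical because it outsources the two substantive facts to \cite{Spi}: the underlying cofibrancy of $\Env_\caO(\cA)$ is \cite[Corollary~6.6]{Spi} (which uses the hypotheses on $\caC$ you listed), and the weak equivalence $\Env_\caO(\cA)\simeq\cA$ is \cite[Lemma~8.6]{Spi}, stated there as the adjoint $\Env_\caO(\cA)\to\cA$ in $\Mod(\cA)$ of the unit map being a weak equivalence for $E_\infty$ operads. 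With those two citations in hand, strong admissibility of $\Env_\caO(\cA)$ and underlying $\caK$-colocality of $\Env_\caO(\cA)\otimes^\bL\caK$ are immediate, and Theorem~\ref{theorem:bhtgrfg} finishes.

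Your sketch for (b) via the standard simplicial resolution and the explicit formula $\Env_\caO(FX)\cong\coprod_{n\ge0}\caO(n+1)\otimes_{\Sigma_n}X^{\otimes n}$ is morally the content behind \cite[Lemma~8.6]{Spi}, and is in fact the shape of argument the paper deploys later (Theorem~\ref{theorem:bgrjf}) for \emph{general} operads. But note that making that resolution argument rigorous requires controlling how $\Env_\caO(-)$ interacts with homotopy colimits of simplicial objects, which in this paper is done through the $\Pairs$ machinery (Proposition~\ref{proposition:bfgdjf}, Lemma~\ref{lemma:bgfeirg}) and needs $\caC$ simplicial and $\cOper,\cPairs$ strongly admissible --- hypotheses absent from Theorem~\ref{theorem:bhgzhj}. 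So under the stated hypotheses the clean route is the direct citation to \cite{Spi}; your resolution sketch is a correct heuristic but would not stand on its own here without importing extra assumptions.
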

\begin{proof}
The enveloping algebra $\Env_\caO(\cA)$ is underlying cofibrant in $\caC$ \cite[Corollary 6.6]{Spi} (the cofibrancy assumption on the unit is missing in loc.~cit.). 
By \cite[Lemma 8.6]{Spi},
the adjoint $\Env_\caO(\cA) \to \cA$ in $\Mod(\cA)$ of the unit map for $\cA$ is a weak equivalence (here we use that $\caO$ is an $E_\infty$ operad). 
Hence the $\cA$-module $\Env_\caO(\cA) \otimes^\bL \caK$ is underlying $\caK$-colocal. 
Thus $\Env_\caO(\cA)$ satisfies the assumptions of Theorem \ref{theorem:bhtgrfg}, and the result follows.
\end{proof}

\begin{remark}
In the above theorem we could also assume that $\caO$ is cofibrant as an operad (the operads in $\caC$ form a left semi model category over $\caC^{\Sigma,\bullet}$ 
--- for notation, see \cite[\S 3]{Spi} --- by \cite[Theorem 3.2]{Spi}), and $\cA$ is underlying cofibrant \cite[Corollaries 6.3, 8.7]{Spi}.
\end{remark}

It is desirable to have a parallel theory for modules over operad algebras (in the one-colored case). Since we have the equivalence $\Mod(\cA)\simeq \Mod(\Env_\caO(\cA))$ and the enveloping algebra is always a monoid, we can restrict to the latter case. 
A key point is to show that $\Env_\caO(\cA)$ is underlying $\caK$-colocal under suitable assumptions, 
making our proof of Theorem \ref{theorem:bhgzhj} for $E_\infty$ operads go through.
For this we employ the simplicial resolution $\cA_\bullet \to \cA$. 
It is easily seen that $\Env_\caO(\cA_n)$ is underlying $\caK$-colocal for each $n \ge 0$, 
so the result follows provided $\Env_\caO(\cA)$ is weakly equivalent to the homotopy colimit over $\Delta^\mathrm{op}$ of the diagram $\Env_\caO(\cA_\bullet)$.

For a symmetric monoidal category $\caC$, 
we denote by $\Pairs(\caC)$ the category of pairs $(\caO,\cA)$,
where $\caO\in\Oper(\caC)$ and $\cA\in\Alg_{\caO}(\caC)$.
Next we review some facts about the colored operads $\cOper$ and $\cPairs$ whose algebras are $\Oper(\caC)$ and $\Pairs(\caC)$, respectively. 
The set of colors for $\cOper$ is $\N$, while for $\cPairs$ it is $\N \cup \{a\}$.
The operad $\cOper$ is a special case of a colored operad defined in \cite[\S3]{GV12} whose algebras are itself colored operads for a fixed set of colors $C$. 
We take $C$ to be a one point set and let $\cOper=S_\caC^C$ in the notation of  \cite{GV12}. 
The colored operad $\cOper$ is the image in $\caC$ of an $\N$-colored operad in sets denoted $S^C$, 
which we now describe (an explicit description of $\cOper$ can be found in~\cite[1.5.6]{BM07}).

Let $S^C(n_1,\ldots,n_k;n)$ denote the set of isomorphism classes of certain trees.  
We consider planar connected directed trees such that each vertex has exactly one outgoing edge. 
There are two different types of edges, 
namely inner edges with vertices at both ends, and external edges with a vertex only at one end or no vertices at all. 
It follows that there is exactly one external edge leaving a vertex, the so-called root. 
There are $n$ external edges which are input edges to vertices, called leaves. 
These are numbered by $\{1,\ldots,n\}$. There are $k$ vertices numbered by $\{1,\ldots,k\}$. 
The planarity of the tree means that the input edges of each vertex $v$ are numbered by $\{1,\ldots,\mathrm{in}(v)\}$, 
where if $v$ is numbered by $i$, then $\mathrm{in}(v)=n_i$.
As described in \cite[1.5.6]{BM07} or \cite[\S3.2]{GV12} there is an $\N$-colored operad structure on $S^C$. 
We set $\cOper^s=S^C$. 
Then $\cOper$ is the image of $\cOper^s$ under the tensor functor sending the one point set to the unit, 
and $\Alg(\cOper)\simeq\Oper(\caC)$ \cite[Proposition 3.5, \S3.3]{GV12}.

Let $c_1,\ldots,c_k \in \N \cup \{a\}$, $n \in \N$. 
If each $c_i$ is in $\N$, we set $\cPairs(c_1,\ldots,c_k;n)=\cOper(c_1,\ldots,c_k;n)$, and otherwise we set $\cPairs(c_1,\ldots,c_k;n)=\emptyset$. 
If the output $c=a$, then $\cPairs(c_1,\ldots, c_k; c)=\cOper(c'_1,\ldots, c'_k; 0)$, where $c'_i=c_i$ if $c_i\in \N$ and $c'_i=0$ if $c_i=a$.

\begin{proposition} 
\label{proposition:nhgdes}
There is an $\N \cup \{a\}$-colored operad structure on $\cPairs$.
Moreover, 
there is a natural equivalence $\Alg(\cPairs) \simeq \Pairs(\caC)$.
\end{proposition}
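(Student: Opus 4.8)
The plan is to verify the two claims of Proposition~\ref{proposition:nhgdes} separately, treating the operad structure first and then the identification of algebras.

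\textbf{Step 1: The colored operad structure on $\cPairs$.} First I would observe that $\cPairs$ is defined in terms of $\cOper$, so it suffices to exhibit composition maps and a unit that are built from the corresponding data of $\cOper$. Concretely, a composition
$$
\cPairs(c_1,\ldots,c_k;c)\otimes\bigotimes_{i=1}^{k}\cPairs(d_{i,1},\ldots,d_{i,m_i};c_i)\longrightarrow \cPairs(d_{1,1},\ldots,d_{k,m_k};c)
$$
must be constructed; I would do a case analysis on whether $c\in\N$ or $c=a$, and whether the $c_i$ lie in $\N$ or equal $a$. When $c\in\N$, any occurrence of $c_i=a$ forces the left-hand factor to be $\emptyset$, and likewise when some $d_{i,j}=a$ with $c_i\in\N$ the corresponding inner factor is $\emptyset$, so the only nonempty case is all colors in $\N$, where the map is exactly $\cOper$'s composition. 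When $c=a$, the definition replaces every $a$-input by the color $0$; I would check that this replacement is compatible with $\cOper$'s composition, i.e.\ that substituting $0$-colored corollas (which in the tree description are vertices with no leaves) commutes with grafting trees. This is where the bookkeeping lives: one must check that $\cPairs(c_1,\ldots,c_k;a)=\cOper(c_1',\ldots,c_k';0)$, after composing with inner operations that themselves may have output $a$ or output in $\N$, reassembles to $\cOper(d_{1,1}',\ldots,d_{k,m_k}';0)$. The unit maps $I\to\cPairs(n;n)$ and $I\to\cPairs(a;a)=\cOper(0;0)$ are the units of $\cOper$. Associativity, unitality, and $\Sigma$-equivariance then all reduce, case by case, to the already-established axioms for $\cOper$.

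\textbf{Step 2: $\Alg(\cPairs)\simeq\Pairs(\caC)$.} I would unwind what a $\cPairs$-algebra is: it assigns an object $X_n\in\caC$ to each $n\in\N$ and an object $X_a\in\caC$, together with structure maps indexed by the $\cPairs(c_1,\ldots,c_k;c)$. Restricting attention to colors in $\N$, the sub-data $(X_n)_{n\in\N}$ with the operations coming from $\cPairs(c_1,\ldots,c_k;n)=\cOper(c_1,\ldots,c_k;n)$ is precisely a $\cOper$-algebra, hence by the cited result of \cite{GV12} an operad $\caO$ in $\caC$ with $\caO(n)=X_n$. The remaining operations, those with output color $a$, are of the form $\cOper(c_1',\ldots,c_k';0)\otimes X_{c_1}\otimes\cdots\otimes X_{c_k}\to X_a$ where an $a$-colored input contributes a factor $X_a$ and is recorded as $0$ on the operad side. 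Matching this against the definition of an $\caO$-algebra structure on $X_a$ (see Appendix~\ref{jnhgd}) — where the action of $\caO(m)$ on $X_a$ uses $m$ copies of $X_a$, and more generally the operations of the operad enveloping-algebra type use $\caO(\text{inputs})$ with some slots filled by $X_a$ — I would identify precisely the data of a $\cPairs$-algebra with a pair $(\caO,\cA)$, $\cA=X_a$. Morphisms correspond under the same dictionary, giving the equivalence of categories.

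\textbf{Main obstacle.} The genuine work is in Step~1: checking that the ``replace $a$ by $0$'' prescription is functorial for operadic composition, i.e.\ that $\cPairs$ really is closed under composition and the axioms hold. In tree language this amounts to verifying that grafting a tree with a designated (possibly $0$-leaved) family of vertices behaves coherently, and one has to be careful that when an inner operation has output $a$ it contributes, after the substitution, a $0$-colored subtree in the right place. I expect this to be a somewhat tedious but entirely mechanical verification, and I would either spell it out on the level of the set-operad $S^C$ (so that applying the symmetric monoidal functor to $\caC$ is formal) or, more slickly, recognize $\cPairs$ as obtained from $\cOper$ by a standard ``pointed/pair'' construction and cite the corresponding general fact; the reference to \cite[1.5.6]{BM07} and \cite[\S3]{GV12} suggests the latter route is available. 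Step~2, by contrast, is a direct comparison of the two descriptions and should present no difficulty once Step~1 is in hand.
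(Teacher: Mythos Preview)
Your proposal is correct and follows essentially the same approach as the paper: define the composition and unit of $\cPairs$ by transport from $\cOper$ via the ``replace $a$ by $0$'' rule, then unwind a $\cPairs$-algebra into its $\N$-colored part (an operad $\caO$ via $\Alg(\cOper)\simeq\Oper(\caC)$) together with the $a$-output operations giving the $\caO$-algebra structure on $X_a$. The paper's proof is considerably terser than yours---it asserts Step~1 in a single sentence and in Step~2 singles out only the structure maps $\cPairs(n,a,\ldots,a;a)\otimes\cA(n)\otimes\cA(a)^{\otimes n}\to\cA(a)$---so your more explicit case analysis and your identification of the bookkeeping in Step~1 as the genuine (if mechanical) content are entirely appropriate.
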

\begin{proof} The composition product and the unit maps of $\cPairs$ are defined using the composition product and unit maps of $\cOper$. 
If $\cA$ is a $\cPairs$-algebra, then the structure maps
$$
\cPairs(c_1,\ldots, c_k;c)\otimes\cA(c_1)\otimes\cdots\otimes  \cA(c_k)\longrightarrow \cA(c)
$$
when $c_1,\ldots, c_k, c\in \N$ give the sequence $\caO=\{\cA(n)\}_{n\ge 0}$ the structure of an operad, since $\cPairs(c_1,\ldots, c_k;c)=\cOper(c_1,\ldots, c_k; c)$. The $\caO$-algebra structure on $\cA(a)$ is defined by the structure maps
$$
\cPairs(n, a,\stackrel{(k)}{\ldots}, a; a)\otimes \cA(k)\otimes \cA(a)\otimes\stackrel{(n)}{\cdots}\otimes \cA(a)\longrightarrow \cA(a),
$$
since $\cPairs(n, a,\stackrel{(k)}{\ldots}, a; a)=\cOper(n, 0,\stackrel{(n)}{\ldots}, 0;0)$.
\end{proof}

Note that, as for $\cOper$, $\cPairs$ is the image in $\caC$ of a colored operad, say $\cPairs^s$, in sets.

\begin{lemma} 
\label{lemma:nhgfsx}
If the unit in $\caC$ is cofibrant, then the underlying collections of $\cOper$ and $\cPairs$ are cofibrant. 
More precisely, 
let $c_1,\ldots,c_k$ and $c$ be sequences of colors for $\cOper$ and $\cPairs$, respectively.
Then the stabilizer groups of these sequences 
--- which are subgroups of $\Sigma_k$ ---
act freely on $\cOper^s(c_1,\ldots,c_k;c)$ and $\cPairs^s(c_1,\ldots,c_k;c)$, respectively.
\end{lemma}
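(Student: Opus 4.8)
The plan is to prove the ``more precisely'' assertion first — freeness of the stabilizer actions on $\cOper^s(c_1,\ldots,c_k;c)$ and $\cPairs^s(c_1,\ldots,c_k;c)$ — and then read off cofibrancy of the underlying collections. For the deduction, recall that the colorwise transferred model structure on $\Coll_C(\caC)$ decomposes, after choosing one representative in each $\Sigma_k$-orbit of color tuples, as a product of projective model structures $\caC^{G}$, where $G\subseteq\Sigma_k$ is the stabilizer of the chosen tuple $(c_1,\ldots,c_k;c)$; accordingly, a collection is cofibrant iff each of its values is projectively cofibrant in the corresponding $\caC^{G}$. Now $\cOper(c_1,\ldots,c_k;c)$ (and likewise for $\cPairs$) is the image $\coprod_S I$ of the $G$-set $S=\cOper^s(c_1,\ldots,c_k;c)$ under the monoidal functor $\Set\to\caC$, $S\mapsto\coprod_S I$. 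Granting that $G$ acts freely on $S$, a choice of section of $S\to S/G$ gives a $G$-equivariant isomorphism $\coprod_S I\cong\coprod_{S/G}\big(\coprod_G I\big)$, i.e.\ a coproduct of copies of the free $G$-object on $I$. Since $I$ is cofibrant and the free functor $\caC\to\caC^{G}$ is left Quillen for the projective structure, $\coprod_G I$ is cofibrant in $\caC^{G}$, and coproducts of cofibrant objects are cofibrant; hence $\coprod_S I$ is cofibrant, and the underlying collections of $\cOper$ and $\cPairs$ are cofibrant.

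So it remains to prove the freeness. For $\cOper^s$: an element of $\cOper^s(n_1,\ldots,n_k;n)=S^C(n_1,\ldots,n_k;n)$ is the isomorphism class of a planar connected directed tree $T$ equipped with a numbering of its $n$ leaves by $\{1,\ldots,n\}$ and a bijective numbering $\ell\colon V(T)\to\{1,\ldots,k\}$ of its $k$ vertices such that the vertex numbered $i$ has $n_i$ inputs. The key point is that such a tree is rigid: any automorphism fixes the root edge and preserves the linear order on the inputs of each vertex, hence — descending from the root — fixes every edge and every vertex. Consequently, if $\sigma$ lies in the stabilizer $G=\mathrm{Stab}_{\Sigma_k}(n_1,\ldots,n_k)$ and $\sigma\cdot[T,\ell]=[T,\ell]$, then, the underlying tree being rigid, the relabeling of $\ell$ by $\sigma$ must again equal $\ell$; as $\ell$ is a bijection onto $\{1,\ldots,k\}$, this forces $\sigma=\mathrm{id}$. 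Thus $G$ acts freely on $\cOper^s(n_1,\ldots,n_k;n)$ (indeed $G$ acts simply transitively on the admissible numberings of each fixed $T$).

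For $\cPairs^s$: by construction each nonempty set $\cPairs^s(c_1,\ldots,c_k;c)$, together with the action of the stabilizer of $(c_1,\ldots,c_k)$ in $\Sigma_k$, coincides with a set $\cOper^s(c_1',\ldots,c_k';c')$, where $(c_1',\ldots,c_k')$ is obtained from $(c_1,\ldots,c_k)$ by replacing each $a$ by $0$. Replacing $a$ by $0$ can only create more coincidences among the colors, so the stabilizer of $(c_1,\ldots,c_k)$ for the color set $\N\cup\{a\}$ is a subgroup of the stabilizer of $(c_1',\ldots,c_k')$ for the color set $\N$; since a subgroup of a group acting freely acts freely, the former acts freely on $\cPairs^s(c_1,\ldots,c_k;c)$ by the previous paragraph. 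The main obstacle is exactly the combinatorial rigidity in the middle step: one must pin down what ``isomorphism class'' means for these decorated planar trees and verify that the planar-plus-rooted structure admits no nontrivial automorphisms, so that the symmetric group acts freely on the admissible vertex numberings; the rest is bookkeeping with the free $G$-object functor and the decomposition of $\Coll_C(\caC)$.
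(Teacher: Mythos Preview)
Your proof is correct and follows essentially the same line as the paper's: the key observation is that planar rooted trees are rigid (two isomorphic such trees are uniquely isomorphic), so the $\Sigma_k$-action on the vertex numberings is free. You supply considerably more detail than the paper does --- in particular you spell out the deduction of cofibrancy from freeness via the decomposition of $\Coll_C(\caC)$ into projective model categories $\caC^G$, and you reduce the $\cPairs^s$ case to the $\cOper^s$ case by observing that the relevant stabilizer is a subgroup --- whereas the paper's proof is a two-sentence sketch invoking rigidity directly.
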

\begin{proof}
This uses the explicit description of these colored operads:
two isomorphic planar trees of the type we consider are already uniquely isomorphic, 
the additional numbering of the vertices --- and leaves for the case of $\cPairs^s$ --- force the actions to be free.
\end{proof}

\begin{proposition} 
\label{proposition:bfgdjf}
Let $\caC$ be a cofibrantly generated simplicial symmetric monoidal model category such that all of its objects are small relative to the whole category.
Suppose $\cPairs$ is strongly admissible (e.g., the unit in $\caC$ is cofibrant and $\Pairs(\caC)$ has a transferred model structure by Lemma \ref{lemma:nhgfsx}). 
For a simplicial object $\cA_\bullet$ in $\Pairs(\caC)$ there is a canonical weak equivalence
$$
\hocolim_{\Delta^\mathrm{op}} U(\cA_\bullet) \longrightarrow U(\hocolim_{\Delta^\mathrm{op}} \cA_\bullet).
$$
Here, 
$U$ denotes the forgetful functor $\Pairs(\caC)\to \caC^{\N \cup \{a\}}$.
\end{proposition}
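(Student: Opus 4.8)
The plan is to recognize this as the instance of Lemma~\ref{lemma:hocolimUweakequivalence} attached to the $(\N\cup\{a\})$-colored operad $\cPairs$, so that essentially no new work is needed beyond matching up the ingredients.

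First I would record the model-categorical setup. By Proposition~\ref{proposition:nhgdes} there is an equivalence $\Alg(\cPairs)\simeq\Pairs(\caC)$, and under it the forgetful functor $\Alg_{\cPairs}(\caC)\to\caC^{\N\cup\{a\}}$ that evaluates a $\cPairs$-algebra at each color corresponds exactly to the functor $U\colon\Pairs(\caC)\to\caC^{\N\cup\{a\}}$ in the statement: the pair $(\caO,\cA)$ is sent to the collection whose value at a color $n\in\N$ is $\caO(n)$ and whose value at the color $a$ is the underlying object of $\cA$. Since $\cPairs$ is strongly admissible by hypothesis, $\Pairs(\caC)=\Alg_{\cPairs}(\caC)$ carries the transferred model structure in which the homotopy colimits of the statement are computed, and in which $U$ is precisely the forgetful functor of a strongly admissible colored operad.

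Next I would check that Lemma~\ref{lemma:hocolimUweakequivalence} applies with $C=\N\cup\{a\}$ and $\caO=\cPairs$. The category $\caC$ is a cofibrantly generated simplicial symmetric monoidal model category, and the standing assumption that all of its objects are small relative to the whole category is among the inputs to the proof of that lemma, where it underlies the cofibrant generation of $s\caC$ and hence the Reedy-cofibrancy statement of Corollary~\ref{corollary:gfddf} together with the realization identity of Corollary~\ref{corollary:bgfrd}. The color set being countably infinite rather than finite causes no difficulty, since none of the cited statements restricts the cardinality of $C$. Applying Lemma~\ref{lemma:hocolimUweakequivalence} to the simplicial object $\cA_\bullet$ in $\Alg_{\cPairs}(\caC)\simeq\Pairs(\caC)$ then yields the asserted natural weak equivalence $\hocolim_{\Delta^{\mathrm{op}}}U(\cA_\bullet)\to U(\hocolim_{\Delta^{\mathrm{op}}}\cA_\bullet)$.

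I do not anticipate a serious obstacle: the only genuine content is the bookkeeping of the previous two paragraphs. For a self-contained argument one can instead repeat the proof of Lemma~\ref{lemma:hocolimUweakequivalence} line by line, its single non-formal input being that the underlying collection of the operad in question be cofibrant; and Lemma~\ref{lemma:nhgfsx} supplies this for $\cPairs$ whenever the unit of $\caC$ is cofibrant, which holds automatically for a simplicial symmetric monoidal model category.
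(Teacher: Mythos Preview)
Your proposal is correct and follows exactly the paper's approach: the paper's proof consists of the single sentence that the result follows directly from Lemma~\ref{lemma:hocolimUweakequivalence} and Proposition~\ref{proposition:nhgdes}. Your additional bookkeeping (matching the forgetful functors, noting that the unit is cofibrant in a simplicial symmetric monoidal model category, and that the cardinality of $C$ is irrelevant) is accurate and simply makes explicit what the paper leaves implicit.
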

\begin{proof}
This follows directly from Lemma \ref{lemma:hocolimUweakequivalence} and Proposition \ref{proposition:nhgdes}.
\end{proof}

There is an embedding $\phi \colon\Oper(\caC) \to \Pairs(\caC)$ given by $\caO \mapsto (\caO,\caO(0))$. 
It is shown in \cite[Proposition 1.6]{Berger-Moerdijk.Der} that $\phi$ has a left adjoint $(\caO,\cA) \mapsto \caO_\cA$. 
The operad $\caO_\cA$ has the property that the category of $\caO$-algebras under $\cA$ is equivalent to $\caO_\cA$-algebras, 
and the canonical $\caO$-algebra map $\cA\to \caO_\cA(0)$ is an isomorphism \cite[Lemma 1.7]{Berger-Moerdijk.Der}.
Moreover, there is a canonical isomorphism of monoids $\Env_\caO(\cA) \cong\caO_\cA(1)$; see \cite[Theorem 1.10]{Berger-Moerdijk.Der}.

\begin{lemma} 
\label{lemma:bgfeirg}
Suppose $\caC$ is a symmetric monoidal model category, and $\Oper(\caC)$ and $\Pairs(\caC)$ have transferred model structures. 
Then the embedding $\phi \colon \Oper(\caC) \to \Pairs(\caC)$ is a right Quillen functor.
\end{lemma}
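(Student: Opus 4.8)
The plan is to exhibit $\phi$ as a right adjoint and then to check, directly on underlying collections, that it preserves fibrations and trivial fibrations. First I would recall that $\phi$ does admit a left adjoint, namely $(\caO,\cA)\mapsto\caO_\cA$, by \cite[Proposition 1.6]{Berger-Moerdijk.Der}; this is already noted above. So it remains only to verify that $\phi$ sends fibrations in $\Oper(\caC)$ to fibrations in $\Pairs(\caC)$, and likewise for trivial fibrations.

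Next I would unwind the two transferred model structures. Since $\Oper(\caC)\simeq\Alg(\cOper)$, a map $f\colon\caO\to\caO'$ of operads is a fibration (resp. weak equivalence) exactly when its underlying map in $\caC^{\N}$ is one colorwise, i.e. when $\caO(n)\to\caO'(n)$ is a fibration (resp. weak equivalence) in $\caC$ for every $n\ge 0$. Similarly, by Proposition~\ref{proposition:nhgdes} we have $\Pairs(\caC)\simeq\Alg(\cPairs)$, so a map in $\Pairs(\caC)$ is a fibration (resp. weak equivalence) exactly when its underlying map in $\caC^{\N\cup\{a\}}$ is colorwise a fibration (resp. weak equivalence). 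The key observation is that the composite of $\phi$ with the forgetful functor $U\colon\Pairs(\caC)\to\caC^{\N\cup\{a\}}$ factors as
$$
\Oper(\caC)\longrightarrow\caC^{\N}\xrightarrow{\ \Delta\ }\caC^{\N\cup\{a\}},
$$
where the first arrow is the forgetful functor for operads and $\Delta$ is the identity on the $\N$-indexed components and assigns to the extra color $a$ the component indexed by $0\in\N$. This is immediate from the formula $\phi(\caO)=(\caO,\caO(0))$ together with the description, in Proposition~\ref{proposition:nhgdes}, of the underlying $(\N\cup\{a\})$-object of a $\cPairs$-algebra (whose $a$-component is exactly the algebra part).

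Finally I would conclude: the functor $\Delta$ visibly preserves colorwise fibrations and colorwise weak equivalences, since it merely repeats the $0$-component as the $a$-component. Hence if $f$ is a fibration (resp. trivial fibration) in $\Oper(\caC)$, then the family $\{\caO(n)\to\caO'(n)\}_{n\ge 0}$ consists of fibrations (resp. trivial fibrations) in $\caC$, so $U(\phi(f))=\Delta(\cdot)$ is colorwise a fibration (resp. trivial fibration), whence $\phi(f)$ is a fibration (resp. trivial fibration) in $\Pairs(\caC)$. Together with the existence of the left adjoint, this shows $\phi$ is right Quillen. I do not anticipate a genuine obstacle here; the only point requiring care is to match the combinatorial role of the extra color $a$ of $\cPairs$ with the nullary part $\caO(0)$ of an operad, which is precisely what the explicit description of $\cPairs$ preceding Proposition~\ref{proposition:nhgdes} supplies.
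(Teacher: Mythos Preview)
Your proposal is correct and follows essentially the same approach as the paper's proof, which simply notes that $\phi$ has a left adjoint and preserves fibrations and weak equivalences. You have merely spelled out why the latter holds by unwinding the transferred model structures colorwise; the only cosmetic difference is that you verify preservation of fibrations and trivial fibrations rather than fibrations and weak equivalences, which amounts to the same thing for establishing that $\phi$ is right Quillen.
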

\begin{proof}
With these assumptions the functor $\phi$ has a left adjoint and preserves fibrations and weak equivalences.
\end{proof}

\begin{lemma} 
\label{lemma:gdjfdf}
Let $\caC$ be a cofibrantly generated symmetric monoidal model category such that the domains of the generating cofibrations are small relative to the whole category. 
Suppose $\Pairs(\caC)$ has a transferred model structure.
Then $(\caO,\cA)\in \Pairs$ is cofibrant if and only if $\caO$ is cofibrant in $\Oper(\caC)$ and $\cA$ is cofibrant in $\Alg_{\caO}(\caC)$.
\end{lemma}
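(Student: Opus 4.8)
The strategy is to compare cofibrancy in $\Pairs(\caC)$ with cofibrancy in $\Oper(\caC)$ and in the fibres $\Alg_{\caO}(\caC)$, using two adjunctions together with a cell-by-cell analysis of the functor sending $\cB$ to $(\caO,\cB)$. I would begin by introducing the projection $\pi\colon\Pairs(\caC)\to\Oper(\caC)$, $(\caO,\cA)\mapsto\caO$. Since $\caO(0)$ is the initial $\caO$-algebra, a direct inspection of hom-sets shows that the embedding $\phi$ of the excerpt is the \emph{left} adjoint of $\pi$, while $\caO\mapsto(\caO,\ast)$ --- with $\ast$ the terminal object of $\caC$, carrying its unique $\caO$-algebra structure --- is the \emph{right} adjoint of $\pi$. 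In the transferred model structures fibrations and trivial fibrations are detected colourwise, so both $\pi$ and $\caO\mapsto(\caO,\ast)$ preserve fibrations and trivial fibrations; hence $\pi$ and $\phi$ are both left Quillen, and in particular preserve cofibrant objects. (By Lemma~\ref{lemma:bgfeirg}, $\phi$ is at the same time right Quillen.) I would also record that $\pi$ is a Grothendieck opfibration with fibre $\Alg_{\caO}(\caC)$ over $\caO$, the opcartesian lift of an operad map $u\colon\caO\to\caP$ at $\cA$ being $(\caO,\cA)\to(\caP,u_{!}\cA)$; it follows that a colimit in $\Pairs(\caC)$ of a diagram mapping to the constant diagram at $\caO$ under $\pi$ equals $(\caO,\cD)$, where $\cD$ is the corresponding colimit computed in the fibre $\Alg_{\caO}(\caC)$.

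For the implication from right to left, suppose $\caO$ is cofibrant in $\Oper(\caC)$ and $\cA$ is cofibrant in $\Alg_{\caO}(\caC)$. Then $\phi(\caO)=(\caO,\caO(0))$ is cofibrant in $\Pairs(\caC)$, so it remains to prove that $(\caO,\caO(0))\to(\caO,\cA)$ is a cofibration. I would do this by analysing $j_{\caO}\colon\Alg_{\caO}(\caC)\to\Pairs(\caC)$, $\cB\mapsto(\caO,\cB)$. Using the description of $\cPairs$ in terms of $\cOper$ (Proposition~\ref{proposition:nhgdes}), one identifies the free $\cPairs$-algebra on an object $X$ placed in the colour $a$ with the pair consisting of the initial operad and $X$; for a generating cofibration $j$ of $\caC$, placed in colour $a$ to give $j_{a}$, the map $F_{\cPairs}(j_{a})$ is then the induced map between such pairs, and one checks that the square having $F_{\cPairs}(j_{a})$ as left column, $(\caO,F_{\caO}(A))\to(\caO,F_{\caO}(B))$ as right column, and the canonical maps as rows is a pushout in $\Pairs(\caC)$. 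Hence $j_{\caO}$ sends each generating cofibration $F_{\caO}(j)$ of $\Alg_{\caO}(\caC)$ to a pushout of a generating cofibration of $\Pairs(\caC)$, in particular to a cofibration. Since $j_{\caO}$ preserves colimits of diagrams mapping to the constant diagram at $\caO$ (by the opfibration remark) as well as retracts, it carries the cofibration $\caO(0)=\emptyset_{\Alg_{\caO}(\caC)}\to\cA$ --- a retract of a relative cell complex on the maps $F_{\caO}(j)$ --- to a cofibration $(\caO,\caO(0))\to(\caO,\cA)$. Composing with $\emptyset_{\Pairs(\caC)}\to(\caO,\caO(0))$ shows that $(\caO,\cA)$ is cofibrant.

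For the converse, suppose $(\caO,\cA)$ is cofibrant in $\Pairs(\caC)$. Then $\caO=\pi(\caO,\cA)$ is cofibrant in $\Oper(\caC)$ because $\pi$ preserves cofibrant objects. To see that $\cA$ is cofibrant in $\Alg_{\caO}(\caC)$, factor $\emptyset_{\Alg_{\caO}(\caC)}\to\cA$ as $\emptyset_{\Alg_{\caO}(\caC)}\to\cA^{c}\xrightarrow{q}\cA$ with $\cA^{c}$ cofibrant and $q$ a trivial fibration. By the right-to-left implication already proved, $(\caO,\cA^{c})$ is cofibrant in $\Pairs(\caC)$, and $(\mathrm{id}_{\caO},q)\colon(\caO,\cA^{c})\to(\caO,\cA)$ is a (colourwise) trivial fibration; since $(\caO,\cA)$ is cofibrant, it admits a section, which is necessarily of the form $(\mathrm{id}_{\caO},\sigma)$ with $q\sigma=\mathrm{id}_{\cA}$. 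Thus $\cA$ is a retract of the cofibrant object $\cA^{c}$ in $\Alg_{\caO}(\caC)$, hence cofibrant.

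The step requiring the most care is the pushout computation in the right-to-left implication: identifying $F_{\cPairs}(X[a])$ with the pair given by the initial operad and $X$, and recognising $j_{\caO}(F_{\caO}(j))$ as a pushout of $F_{\cPairs}(j_{a})$. This is precisely where the combinatorial structure of $\cPairs$ --- that it agrees with $\cOper$ on the $\N$-coloured inputs and couples in the algebra only through the extra colour $a$ --- does the essential work; the remaining arguments are formal manipulations with adjunctions, the opfibration $\pi$, and cell structures.
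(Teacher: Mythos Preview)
Your argument is correct, but it takes a substantially more elaborate route than the paper. The paper dispatches both implications directly via the lifting characterisation of cofibrancy. For the forward direction it tests $(\caO,\cA)$ against trivial fibrations of the special forms $(\caO_1,\ast)\to(\caO_2,\ast)$ and $(\caO,\cA_1)\to(\caO,\cA_2)$, which immediately yields cofibrancy of $\caO$ and of $\cA$ respectively. For the converse it solves an arbitrary lifting problem in $\Pairs(\caC)$ in two steps: first lift on the operad level using cofibrancy of $\caO$, then pull the algebras back along the chosen lift and solve the resulting lifting problem in $\Alg_{\caO}(\caC)$ using cofibrancy of $\cA$. No cell analysis, no opfibration formalism, no identification of $F_{\cPairs}(X[a])$ is needed.

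Your approach trades this directness for structure: you exhibit $\pi$ and $\phi$ as left Quillen functors and analyse the inclusion $j_{\caO}$ on generating cells, which is illuminating and would generalise well (e.g.\ to other Grothendieck constructions of model categories). Two small points to make explicit if you keep your version: first, in the backward implication you silently use that $\Alg_{\caO}(\caC)$ carries a (semi) model structure transferred from $\caC$, which is exactly what the paper's opening sentence of the proof records and requires $\caO$ to be cofibrant; second, in the retract step your claim that the section has operad component $\mathrm{id}_{\caO}$ is forced by the section condition, but it is worth saying so, since a priori a lift produced by the LLP need not fix the base. Both approaches are valid; the paper's is simply the shortest path, while yours makes the categorical scaffolding visible.
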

\begin{proof}
Recall $\Oper(\caC)$ has a left semi model structure over $\caC^{\Sigma,\bullet}$, 
and if $\caO \in \Oper(\caC)$ is cofibrant, then the same holds for $\Alg_{\caO}(\caC)$ over $\caC$ \cite[Theorems 3.2, 4.3]{Spi}.

Suppose $(\caO,\cA)$ is cofibrant. 
The lifting property with respect to trivial fibrations $(\caO_1,\mathrm{pt}) \to (\caO_2,\mathrm{pt})$ shows that $\caO$ is cofibrant. 
And the lifting property with respect to trivial fibrations $(\caO,\cA_1) \to (\caO,\cA_2)$ shows that $\cA$ is cofibrant in $\Alg_{\caO}(\caC)$.

Conversely, assume $\caO$ and $\cA$ are cofibrant. 
Let $(\caO_1,\cA_1) \to (\caO_2,\cA_2)$ be a trivial fibration in $\Pairs(\caC)$, and $(\caO,\cA) \to (\caO_2,\cA_2)$ a map. 
First, we can lift $\caO \to \caO_2$ to a map $\caO \to \caO_1$. 
Pulling the algebras $\cA_1$ and $\cA_2$ back to $\caO$ gives us a lifting problem in $\Alg_{\caO}(\caC)$, which can be solved.
\end{proof}

\begin{theorem}
\label{theorem:bgrjf}
Let $\caC$ be a cofibrantly generated simplicial symmetric monoidal model category such that all of its objects are small relative to the whole category.
Let $\caK$ be a tensor-closed set of isomorphism classes of objects for $\Ho(\caC)$. 
Suppose $\cOper$ and $\cPairs$ are strongly admissible (e.g., if the unit in $\caC$ is cofibrant and $\Oper(\caC)$ and $\Pairs(\caC)$ have transferred model structures). 
If $(\caO,\cA) \in \Pairs(\caC)$ is cofibrant and each $\caO(n)$ is $\caK$-colocal and $\cA$ is underlying $\caK$-colocal, then the enveloping algebra $\Env_\caO(\cA)$ is underlying $\caK$-colocal.
\end{theorem}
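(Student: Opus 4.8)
The plan is to reduce the statement to Proposition~\ref{proposition:colocalunderlying2} applied to the monoid $\Env_\caO(\cA)$, by exhibiting $\Env_\caO(\cA)$ as a homotopy colimit over $\Delta^{\mathrm{op}}$ of enveloping algebras of the simplicial resolution $\cA_\bullet \to \cA$. First I would set up the standard simplicial object $\cA_\bullet$ with $\cA_n = (F_\caO U_\caO)^{n+1}\cA$ in $\Alg_\caO(\caC)$ (using that $\caO$ has an underlying cofibrant collection by Lemma~\ref{lemma:nhgfsx}, so that the left semi model structure on $\Alg_\caO(\caC)$ exists), arranged so each $\cA_n$ has the correct homotopy type, i.e. is weakly equivalent to $(\bL F_\caO\, U_\caO)^{n+1}\cA$, exactly as in the proofs of Lemma~\ref{lemma:hocolimunderlying} and Proposition~\ref{proposition:colocalunderlying}(i). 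Since $\cA$ is cofibrant, Lemma~\ref{lemma:hocolimmap} (or rather its analogue via Lemma~\ref{lemma:hocolimUweakequivalence}, which applies here because $\cOper$ is strongly admissible) gives $\hocolim_{\Delta^{\mathrm{op}}}\cA_\bullet \xrightarrow{\ \sim\ }\cA$.

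Next I would promote this to a statement about pairs. Each $\cA_n$ is a free $\caO$-algebra on an object of $\caC$, so the pair $(\caO,\cA_n)$ is cofibrant in $\Pairs(\caC)$ by Lemma~\ref{lemma:gdjfdf} (free algebras on cofibrant objects are cofibrant, and $\caO$ is cofibrant as an operad). Viewing $n \mapsto (\caO,\cA_n)$ as a simplicial object $\cB_\bullet$ in $\Pairs(\caC)$, Proposition~\ref{proposition:bfgdjf} applies (we have assumed all objects of $\caC$ are small and $\cPairs$ strongly admissible), so $U_{\cPairs}$ commutes with the homotopy colimit over $\Delta^{\mathrm{op}}$; in particular the $\Oper$-component gives $\hocolim_{\Delta^{\mathrm{op}}}\caO \simeq \caO$ (a constant diagram, consistent), and the algebra component recovers $\hocolim_{\Delta^{\mathrm{op}}}\cA_\bullet \simeq \cA$. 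The point of passing through $\Pairs(\caC)$ is that the functor $(\caO,\cA)\mapsto \caO_\cA$ is a \emph{left} adjoint (by \cite[Proposition 1.6]{Berger-Moerdijk.Der}), hence — once we know $(\caO,\cA)$ and each $(\caO,\cA_n)$ are cofibrant — it is left Quillen and preserves homotopy colimits. Composing with the functor $\caO_\bullet \mapsto \caO_\bullet(1)$ and using the natural isomorphism $\Env_\caO(-)\cong (-)_{(-)}(1)$ from \cite[Theorem 1.10]{Berger-Moerdijk.Der} yields
$$
\Env_\caO(\cA)\ \simeq\ \hocolim_{\Delta^{\mathrm{op}}}\Env_\caO(\cA_\bullet).
$$

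Then I would compute each term. Because $\cA_n = F_\caO(X_n)$ is free on a cofibrant object $X_n$ of $\caC$, the enveloping algebra $\Env_\caO(F_\caO X_n)$ admits an explicit description as a coproduct of terms $\caO(k+1)\otimes_{\Sigma_k} X_n^{\otimes k}$ over $k\ge 0$ (this is the standard formula for enveloping algebras of free algebras, e.g. from \cite[\S6]{Spi}); each such term is a homotopy quotient of $\caK$-colocal objects, using that each $\caO(k+1)$ is $\caK$-colocal, that $\caK$ is tensor-closed, and that $X_n$ is built from the $\caK$-colocal object underlying $\cA$ — so $\Env_\caO(\cA_n)$ is underlying $\caK$-colocal, exactly as in Lemma~\ref{lemma:underlyingcolocal}. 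Finally, since underlying $\caK$-colocal objects are closed under homotopy colimits (the forgetful functor to $\caC$ commutes with $\hocolim_{\Delta^{\mathrm{op}}}$ on this resolution, again by Proposition~\ref{proposition:bfgdjf}, and $\caK$-colocal objects are closed under homotopy colimits), $\Env_\caO(\cA)$ is underlying $\caK$-colocal.

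The main obstacle I anticipate is the bookkeeping needed to make ``$\Env_\caO(-)$ commutes with the $\Delta^{\mathrm{op}}$-homotopy colimit'' rigorous: one must verify the diagram $\cB_\bullet = (\caO,\cA_\bullet)$ is Reedy-cofibrant (or can be replaced by such without changing homotopy type), that the left-adjoint functor $(\caO,\cA)\mapsto\caO_\cA$ on $\Pairs(\caC)$ is genuinely left Quillen for the transferred/semi model structures in play, and that the identification $\Env_\caO\cong(-)_{(-)}(1)$ is natural enough to pass to homotopy colimits. A secondary technical point is checking that each $\cA_n$ in the derived standard resolution really is free on a \emph{cofibrant} object (this is where one uses the ``correct'' derived simplicial object $(F_\caO Q U_\caO)^{n+1}\cA$ with $Q$ a cofibrant replacement in $\caC$, per the remark following Lemma~\ref{lemma:hocolimmap}), so that the explicit enveloping-algebra formula computes the homotopy type.
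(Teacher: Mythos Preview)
Your proposal is correct and follows essentially the same route as the paper: the standard simplicial resolution $\cA_\bullet$, cofibrancy of $(\caO,\cA_n)$ via Lemma~\ref{lemma:gdjfdf}, the explicit enveloping-algebra formula on free algebras, the left Quillen functor $(\caO',\cA')\mapsto\caO'_{\cA'}$ of Lemma~\ref{lemma:bgfeirg}, and Proposition~\ref{proposition:bfgdjf} applied to both $\Pairs(\caC)$ and $\Oper(\caC)$ to pass the homotopy colimit through the forgetful functors. Your opening reference to Proposition~\ref{proposition:colocalunderlying2} is a red herring --- the argument you actually give does not invoke it --- but this is harmless, and the technical obstacles you flag are exactly the ones the paper handles.
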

\begin{proof}
Let $F \colon \caC \rightleftarrows \Alg(\caO) \; \colon \! U$ be the free-forgetful adjunction. 
Let $\cA_\bullet \to \cA$ be the standard augmented simplicial object with $\cA_n=(FU)^{n+1} \cA$. 
Since $\Alg_{\caO}(\caC)$ is a left semi model category over $\caC$ it follows that $U(FU)^n \cA$, $n \ge 0$, is cofibrant (for $n>0$ one can also use the explicit formula for $F$). 
By Lemma \ref{lemma:gdjfdf} it follows that $(\caO,\cA_n) \in \Pairs(\caC)$ is cofibrant.

For $X \in \caC$ the enveloping algebra $\Env_\caO(FX) \cong\caO_{FX}(1)$ is given by the formula
$$
\Env_\caO(FX) \cong \bigoplus_{n \ge 0} \caO(n+1) \otimes_{\Sigma_n} X^{\otimes n}.
$$
It follows that $\Env_\caO(\cA_n)$ is underlying $\caK$-colocal for each $n \ge 0$.

Since the augmented simplicial object $U\cA_\bullet \to U\cA$ splits, Proposition \ref{proposition:bfgdjf} for $\Pairs(\caC)$ implies there is a canonical weak equivalence
$$
\hocolim_{\Delta^\mathrm{op}} \cA_\bullet \longrightarrow \cA.
$$ 
Next we apply the derived functor of the left Quillen functor $(\caO',\cA') \mapsto \caO'_{\cA'}$ --- see Lemma \ref{lemma:bgfeirg} --- to $(\caO,\cA_\bullet) \to (\caO,\cA)$, 
giving the augmented simplicial object $\caO_{\cA_\bullet} \to \caO_\cA$ in $\Oper(\caC)$. 
Since derived left Quillen functors commute with homotopy colimits, 
there is a weak equivalence
$$
\hocolim_{\Delta^\mathrm{op}} \caO_{\cA_\bullet} \longrightarrow \caO_\cA.
$$ 
Proposition \ref{proposition:bfgdjf} for $\Oper(\caC)$ implies the weak equivalence 
$$
\hocolim_{\Delta^\mathrm{op}} \caO_{\cA_\bullet}(1) \longrightarrow \caO_\cA(1).
$$
Here, 
the homotopy colimit is computed in $\caC$.
It follows that $\caO_\cA(1) \cong \Env_\caO(\cA)$ is underlying $\caK$-colocal, as claimed.
\end{proof}

\begin{corollary}
Let $\caC$, $\caO$, $\cA$, and $\caK$ be as in Theorem~\ref{theorem:bgrjf} and suppose that $\Mod(\cA)$ has a good $\Env_{\caO}(\cA)\otimes^{\bL}\caK$-colocalization. 
If $\caM'\to \caM$ is an $\Env_{\caO}(\cA)\otimes^{\bL}\caK$-colocalization of $\caM\in\Mod(\cA)$, 
then $U(\caM')\to U(\caM)$ is a $\caK$-colocalization in $\caC$.
\end{corollary}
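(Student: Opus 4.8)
The plan is to apply the monoid colocalization result, Theorem~\ref{theorem:bhtgrfg}, to the enveloping algebra $\Env_{\caO}(\cA)$, mirroring the proof of Theorem~\ref{theorem:bhgzhj} but with Theorem~\ref{theorem:bgrjf} playing the role that the $E_{\infty}$-hypothesis plays there.

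First I would verify that $\Env_{\caO}(\cA)$ is a strongly admissible monoid. Since $(\caO,\cA)$ is cofibrant in $\Pairs(\caC)$ and $(\caO',\cA')\mapsto\caO'_{\cA'}$ is the left adjoint of the right Quillen functor $\phi$ of Lemma~\ref{lemma:bgfeirg}, the operad $\caO_{\cA}$ is cofibrant in $\Oper(\caC)$; as $\caC$ is simplicial its unit is cofibrant, so $\caO_{\cA}$ has an underlying cofibrant collection and in particular $\Env_{\caO}(\cA)\cong\caO_{\cA}(1)$ is cofibrant in $\caC$. Moreover $\Mod(\Env_{\caO}(\cA))$ carries the model structure transferred from $\caC$, since it is equivalent to $\Mod(\cA)$, which is a model category by hypothesis (it admits a good colocalization) with weak equivalences and fibrations created in $\caC$. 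Taking $\cA'=\cA=\Env_{\caO}(\cA)$ with $\varphi=\mathrm{id}$, the triple $(\caO_{\Env_{\caO}(\cA)},\caO_{\Env_{\caO}(\cA)},\mathrm{id})$ is then a strongly admissible pair of the type Definition~\ref{def:strongly_admissible}(i): only the first arity is nonzero, and there the object $\Env_{\caO}(\cA)$ is cofibrant.

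Then Theorem~\ref{theorem:bgrjf} gives that $\Env_{\caO}(\cA)$ is underlying $\caK$-colocal; since $\caK$ is tensor-closed, $\Env_{\caO}(\cA)\otimes^{\bL}K$ is $\caK$-colocal for every $K\in\caK$, so the $\Env_{\caO}(\cA)$-module $\Env_{\caO}(\cA)\otimes^{\bL}\caK$ is underlying $\caK$-colocal. Transporting $\caM$ along the equivalence $\Mod(\cA)\simeq\Mod(\Env_{\caO}(\cA))$ — which is compatible with the forgetful functors to $\caC$ and with the object set $\Env_{\caO}(\cA)\otimes^{\bL}\caK$, so that the assumed good colocalization of $\Mod(\cA)$ is a good $\Env_{\caO}(\cA)\otimes^{\bL}\caK$-colocalization of $\Mod(\Env_{\caO}(\cA))$ — we see that the monoid $\Env_{\caO}(\cA)$, the module $\caM$, and $\caK$ satisfy all the hypotheses of Theorem~\ref{theorem:bhtgrfg}. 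That theorem then produces the desired $\caK$-colocalization $U(\caM')\to U(\caM)$ in $\caC$.

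The only substantive point is the first paragraph: one must confirm that $\Env_{\caO}(\cA)$ genuinely meets the definition of a strongly admissible monoid — not merely that it is underlying cofibrant, but that $\Mod(\Env_{\caO}(\cA))$ has the transferred model structure and that this structure is identified, under the canonical equivalence, with the model structure on $\Mod(\cA)$ appearing in the hypothesis. Granting that identification, the remainder is a direct appeal to Theorems~\ref{theorem:bgrjf} and~\ref{theorem:bhtgrfg} together with tensor-closedness of $\caK$.
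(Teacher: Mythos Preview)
Your proposal is correct and follows essentially the same route as the paper: use Theorem~\ref{theorem:bgrjf} to see that $\Env_{\caO}(\cA)$ is underlying $\caK$-colocal, invoke tensor-closedness of $\caK$ to make $\Env_{\caO}(\cA)\otimes^{\bL}\caK$ underlying $\caK$-colocal, and then run the argument of Theorem~\ref{theorem:bhtgrfg} for the monoid $\Env_{\caO}(\cA)$. The paper simply says ``proceed exactly as in the proof of Theorem~\ref{theorem:bhtgrfg}'' without spelling out strong admissibility; your extra paragraph verifying that $\Env_{\caO}(\cA)\cong\caO_{\cA}(1)$ is cofibrant (via cofibrancy of $\caO_{\cA}$ and Proposition~\ref{proposition:rcn}(i) applied to $\cOper$, using Lemma~\ref{lemma:nhgfsx}) is a legitimate way to justify that black-box application.
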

\begin{proof}
Since $\caK$ is tensor closed and $\Env_\caO(\cA)$ is underlying $\caK$-colocal by Theorem~\ref{theorem:bgrjf}, 
it follows that the $\cA$-module $\Env_\caO(\cA)\otimes^{\bL}\caK$ is underlying $\caK$-colocal. 
To conclude we proceed exactly as in the proof of Theorem~\ref{theorem:bhtgrfg},
now with the monoid $\Env_{\caO}(\cA)$.
\end{proof}
\begin{remark}
One may ask for other hypothesis such that Theorem \ref{theorem:bgrjf} still holds. 
With $\caC$ and $\caK$ as above, suppose $\Oper(\caC)$ and $\Pairs(\caC)$ have transferred model structures. 
Suppose $\caC$ has a second simplicial model structure with the same weak equivalences and cofibrant unit.
We wish to conclude that a cofibrant underlying $\caK$-colocal $(\caO,\cA)$ yields an underlying $\caK$-colocal enveloping algebra $\Env_\caO(\cA)$. 

As a replacement for Proposition \ref{proposition:bfgdjf} we sketch an alternate argument: 
Suppose every Reedy cofibrant object $X_\bullet\in s\Pairs(\caC)$ is cofibrant in $s\caC^{\N \cup \{a\}}$ for the Reedy model structure.
Now $\cPairs^s$ --- viewed as a colored operad in $\sSets$ --- has an underlying cofibrant collection. 
Let us assume the objectwise tensor functor $\sSets \times s\caC \to s\caC$ is a Quillen bifunctor. 
Then $c \mathsf{i}\to X_\bullet$ is a cofibration in $s\Pairs(\caC)$, 
where $c \mathsf{i}$ is the constant simplicial object on the initial object $\mathsf{i}$ of $\Pairs(\caC)$, 
see \cite[Proposition 4.8]{Spi}. 
Since $\Delta^\mathrm{op}$ has cofibrant constants, it follows that $X_\bullet$ is underlying Reedy cofibrant.

The same argument works for $\Oper(\caC)$. 
Alternatively, one can use that $\Oper(\caC)$ is a left semi model category over $\caC^{\Sigma,\bullet}$. 
\end{remark}

\subsection{Localization of modules}
\label{subsection:gresgxdh}
As in the previous section, 
we first discuss localization of modules over monoids and then localization of modules over arbitrary operads.

Given a monoid $\cA$, we say that the functor $\cA\otimes^{\bL}-$ preserves $\caS$-equivalences if the tensor product of $\cA$ with any $\caS$-equivalence is an underlying $\caS$-equivalence.
\begin{theorem}
\label{thm:fun_loc_mon}
Let $\cA$ be a strongly admissible monoid in a symmetric monoidal model category $\caC$. 
Let $\caS$ be a set of homotopy classes of maps such that $\cA\otimes^{\bL}-$ preserves $\caS$-local equivalences and $\Mod(\cA)$ has a good $\cA\otimes^{\bL}\caS$-localization. 
If $\caM\to \caM'$ is an $\cA\otimes^{\bL}\caS$-localization of $\caM\in \Mod(\cA)$, 
then $U(\caM)\to U(\caM')$ is an $\caS$-localization in $\caC$.
\end{theorem}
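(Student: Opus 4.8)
The plan is to mimic the proof of Theorem~\ref{theorem:lkjoij}, replacing the free $\caO$-algebra functor $F$ by the free $\cA$-module functor and the forgetful functor $U\colon\Alg_{\caO}(\caC)\to\caC^C$ by $U\colon\Mod(\cA)\to\caC$, using at the appropriate places the fact that $U$ preserves homotopy colimits (Lemma~\ref{lemma:gtrjg} for strongly admissible monoids) in place of Lemma~\ref{lemma:hocolimUweakequivalence}, and Lemma~\ref{lemma:hocolimmap}'s module analogue (the augmentation $\cM_\bullet\to\cM$ with $\cM_n=\cA^{\otimes(n+1)}\otimes\cM$ is split after forgetting, as already used in the proof of Proposition~\ref{proposition:colocalunderlying2}). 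First I would record that since $\cA\otimes^{\bL}-$ preserves $\caS$-local equivalences, the free module functor $F=\cA\otimes-$ sends $\caS$-equivalences to underlying $\caS$-equivalences; this is the module counterpart of Lemma~\ref{lemma:FU_loc}, and it is immediate from the hypothesis together with the fact that $\caS$-local equivalences are closed under homotopy colimits (here there is no colored-collection coproduct to worry about, so the argument is shorter).

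Next I would show $U(\caM')$ is $\caS$-local: this follows from Lemma~\ref{lemma:general}(i), since $\caM'$ is $\cA\otimes^{\bL}\caS$-local and the right adjoint to $F=\cA\otimes-$ is $\Hom(\cA,-)$, so fibrant objects in $\Mod(\cA)_{\bL F(\caS)}$ have $\caS$-local underlying object. The substantive point is that $U(\caM)\to U(\caM')$ is an $\caS$-local equivalence. Here I would reproduce the two-column diagram of iterated bar resolutions from the proof of Theorem~\ref{theorem:lkjoij}: let $U\caM\to\widehat{U\caM}$ be a fibrant replacement in $(\caC)_{\caS}$, which by \cite[Theorem 5.7]{Gut12} is realized as $U$ of a map of $\cA$-modules $\caM\to\caN$; then use the module version of Lemma~\ref{lemma:FU_loc} to see that each $\cM_n=(FU)^n\caM\to(FU)^n\caN=\caN_n$ is an $F(\caS)$-local equivalence, pass to homotopy colimits over $\Delta^{\mathrm{op}}$ (using Lemma~\ref{lemma:gtrjg} and the split augmentation to identify $\hocolim\cM_\bullet\simeq\caM$ and likewise for $\caN$), deduce $\caM\to\caN$ is an $F(\caS)$-local equivalence, and then run the same construction on $\caM'$ to obtain the commuting square of $F(\caS)$-local equivalences $\caM\to\caM'$, $\caM\to\caN$, $\caM'\to\caN'$, $\caN\to\caN'$ with $\caM',\caN,\caN'$ all $F(\caS)$-local; the two-out-of-three property forces $\caM\to\caN$ and $\caN\to\caN'$ to be weak equivalences, and applying $U$ gives the square with $U\caM\to U\caN=\widehat{U\caM}$ and $U\caM'\to U\caN'=\widehat{U\caM'}$ the canonical localization maps, whence $U\caM\to U\caM'$ is an $\caS$-local equivalence.

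I expect the main obstacle to be purely bookkeeping: verifying that the invocations of \cite[Theorem 5.7]{Gut12} and of the good-localization hypothesis are legitimate in the monoid setting — i.e., that $\Mod(\cA)_{\bL F(\caS)}$ really is the algebras-over-an-operad situation to which the earlier machinery applies. This is handled by the remark at the start of \S\ref{section:proofsofthemainresults}: the operad $\caO_{\cA}$ with $\caO_{\cA}(1)=\cA$ and $\caO_{\cA}(i)=\emptyset$ otherwise has $\Alg_{\caO_{\cA}}(\caC)\simeq\Mod(\cA)$, and $\caO_{\cA}$ is strongly admissible precisely because $\cA$ is, so one could in fact phrase the entire proof as a direct appeal to Theorem~\ref{theorem:lkjoij} applied to this operad, after checking that $\caO_{\cA}(1)\otimes-=\cA\otimes-$ preserving $\caS$-equivalences is exactly the stated hypothesis (and that the single-color set makes $\caS^C=\caS$). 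The only mild subtlety is that Theorem~\ref{theorem:lkjoij} assumes $\caC$ simplicial, whereas here $\caC$ is only symmetric monoidal; this is why Lemma~\ref{lemma:gtrjg} is stated and proved separately (it gives the homotopy-colimit preservation without the simplicial hypothesis, via the left Quillen property of $U$), so I would cite Lemma~\ref{lemma:gtrjg} at each place where the proof of Theorem~\ref{theorem:lkjoij} used Lemma~\ref{lemma:hocolimmap} or Lemma~\ref{lemma:hocolimUweakequivalence}.
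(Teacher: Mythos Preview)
Your proposal is correct and matches the paper's own approach: the paper's proof is literally ``The proof is basically the same as for Theorem~\ref{theorem:lkjoij},'' followed by the observation that tensor-closedness of $\caS$-equivalences is unnecessary because $F(X)=\cA\otimes X$ makes each $(FU)^{n+1}\cA\to(FU)^{n+1}\cB$ an $F(\caS)$-equivalence directly from the hypothesis on $\cA\otimes^{\bL}-$. You have spelled out the details the paper suppresses, and your attention to replacing the simplicial-dependent Lemmas~\ref{lemma:hocolimUweakequivalence} and~\ref{lemma:hocolimmap} by Lemma~\ref{lemma:gtrjg} and the split bar resolution is a genuine care point that the paper's one-line proof glosses over.
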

\begin{proof}
The proof is basically the same as for Theorem~\ref{theorem:lkjoij}. 
We note the assumption of tensor-closedness on the $\caS$-local equivalences is not needed since the free $\cA$-module functor is defined by $F(X)=\cA\otimes X$ for every $X$ in $\caC$, 
and therefore $\cA_n=(FU)^{n+1}\cA\to (FU)^{n+1}\cB=\cB_n$ is an $F(\caS)$-equivalence for every map of monoids $\cA\to \cB$.  
\end{proof}

\begin{theorem}
\label{thm:loc_modules}
Let $\caC$ be a cofibrantly generated simplicial symmetric monoidal model category such that all of its objects are small relative to the whole category. 
Let $\caS$ be a set of homotopy classes of maps such that $\caS$-equivalences are tensor-closed. 
Suppose $\cOper$ and $\cPairs$ are strongly admissible (e.g., if the unit in $\caC$ is cofibrant and $\Oper(\caC)$ and $\Pairs(\caC)$ have transferred model structures). 
Let $(\caO,\cA)\in\Pairs(\caC)$ be cofibrant. 
If $\cA\otimes^{\bL}-$ preserves $\caS$-equivalences, then so does $\Env_{\caO}(\cA)\otimes^{\bL}-$.
\end{theorem}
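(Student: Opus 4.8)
The plan is to mimic the structure of the proof of Theorem~\ref{theorem:bgrjf}, replacing ``underlying $\caK$-colocal'' by ``the functor $-\otimes^{\bL}$ preserves $\caS$-equivalences'' throughout, and replacing the role of the colocalization input by the localization hypothesis on $\cA\otimes^{\bL}-$. First I would recall that since $\caC$ is simplicial symmetric monoidal and $\cPairs$, $\cOper$ are strongly admissible, we may assume they have underlying cofibrant collections (by Lemma~\ref{lemma:nhgfsx} and Remark~\ref{rem:cofibrant_unit}), so that $\Oper(\caC)$ is a left semi model category over $\caC$ and, for cofibrant $\caO$, so is $\Alg_{\caO}(\caC)$. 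Form the standard augmented simplicial object $\cA_\bullet\to\cA$ with $\cA_n=(FU)^{n+1}\cA$ for the free-forgetful adjunction $F\colon\caC\rightleftarrows\Alg_{\caO}(\caC)\colon U$; by Lemma~\ref{lemma:gdjfdf} each $(\caO,\cA_n)$ is cofibrant in $\Pairs(\caC)$, and $U(FU)^n\cA$ is cofibrant in $\caC$ for all $n\ge0$.

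Next I would use the explicit formula for the enveloping algebra of a free algebra,
$$
\Env_\caO(FX)\cong\bigoplus_{n\ge0}\caO(n+1)\otimes_{\Sigma_n}X^{\otimes n},
$$
to show that $\Env_\caO(\cA_n)\otimes^{\bL}-$ preserves $\caS$-equivalences for each $n\ge0$: given an $\caS$-equivalence $g$, tensoring each summand $\caO(n+1)\otimes_{\Sigma_n}X^{\otimes n}$ with $g$ yields an $\caS$-equivalence because $\caS$-equivalences are tensor-closed (so $X^{\otimes n}\otimes g$ is one, and tensoring further with $\caO(n+1)$ preserves this), and because $\caS$-equivalences are closed under homotopy quotients by $\Sigma_n$, under coproducts, and under homotopy colimits in general. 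Here one must take the summand to be the \emph{derived} tensor, which is legitimate since $X=U(FU)^n\cA$ and the tensor factors are cofibrant. Then, exactly as in Theorem~\ref{theorem:bgrjf}, applying the derived functor of the left Quillen functor $(\caO',\cA')\mapsto\caO'_{\cA'}$ of Lemma~\ref{lemma:bgfeirg} to the augmented simplicial object $(\caO,\cA_\bullet)\to(\caO,\cA)$ — whose geometric realization computes $(\caO,\cA)$ by Proposition~\ref{proposition:bfgdjf}, since $U\cA_\bullet\to U\cA$ splits — and using that derived left Quillen functors commute with homotopy colimits together with Proposition~\ref{proposition:bfgdjf} for $\Oper(\caC)$, yields a weak equivalence
$$
\hocolim_{\Delta^{\mathrm{op}}}\caO_{\cA_\bullet}(1)\longrightarrow\caO_\cA(1)\cong\Env_\caO(\cA).
$$

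Finally I would conclude that $\Env_\caO(\cA)\otimes^{\bL}-$ preserves $\caS$-equivalences: given an $\caS$-equivalence $g$, the map $\Env_\caO(\cA)\otimes^{\bL}g$ is the homotopy colimit over $\Delta^{\mathrm{op}}$ of the maps $\caO_{\cA_n}(1)\otimes^{\bL}g\cong\Env_\caO(\cA_n)\otimes^{\bL}g$, each of which is an $\caS$-equivalence by the previous step, and $\caS$-equivalences are closed under homotopy colimits. The main obstacle I anticipate is the bookkeeping needed to make the ``derived'' aspects rigorous — ensuring that the simplicial object $\caO_{\cA_\bullet}$ and the tensor factors are suitably cofibrant so that the realization and the formula for $\Env_\caO(FX)$ compute the correct homotopy types, and that the $\Sigma_n$-quotients in that formula are genuinely homotopy quotients. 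This is exactly the same subtlety handled in Theorem~\ref{theorem:bgrjf}, so the resolution should be to invoke the cofibrancy statements from \cite[Corollary 6.6]{Spi}, Lemma~\ref{lemma:gdjfdf}, and the left semi model structure on $\Alg_{\caO}(\caC)$ verbatim.
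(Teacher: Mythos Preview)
Your proposal is correct and follows essentially the same approach as the paper: the paper's proof is a terse two-sentence reduction to Theorem~\ref{theorem:bgrjf} (form the bar resolution $\cA_\bullet$, observe $\Env_\caO(\cA_n)\otimes^{\bL}g$ is an $\caS$-equivalence via the free-algebra formula and tensor-closedness, then invoke the $\caO_{\cA}$-argument verbatim), and you have faithfully expanded exactly those steps with the same ingredients---Lemma~\ref{lemma:gdjfdf}, Lemma~\ref{lemma:bgfeirg}, Proposition~\ref{proposition:bfgdjf}, and closure of $\caS$-equivalences under homotopy colimits.
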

\begin{proof}
Let $F \colon \caC \rightleftarrows \Alg(\caO) \; \colon \! U$ be the free-forgetful adjunction. 
Let $\cA_\bullet \to \cA$ be the standard augmented simplicial object with $\cA_n=(FU)^{n+1} \cA$. 
Suppose that for every $\caS$-local equivalence $g$ the map $\cA\otimes^{\bL}g$ is an $\caS$-local equivalence. 
Then, $\Env_{\caO}(\cA_n)\otimes^{\bL}g$ is also an $\caS$-local equivalence for every $n\ge 0$. 
Now, 
using the same argument as in the proof of Theorem~\ref{theorem:bgrjf} with the operad $\caO_{\cA}$, 
it follows that $\Env_{\caO}\otimes g$ is an $\caS$-equivalence.
\end{proof}

\begin{corollary}
Let $\caC$, $\caO$, $\cA$ and $\caS$ be as in Theorem~\ref{thm:loc_modules} and suppose that $\Mod(\cA)$ has a good $\Env_{\caO}(\cA)\otimes^{\bL}\caS$-localization. 
If $\caM\to \caM'$ is an $\Env_{\caO}(\cA)\otimes^{\bL}\caS$-localization for $\caM\in\Mod(\cA)$, 
then $U(\caM)\to U(\caM')$ is an $\caS$-localization in $\caC$.
\end{corollary}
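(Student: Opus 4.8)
The plan is to transfer the problem back to the monoid case and apply Theorem~\ref{thm:fun_loc_mon} to the enveloping algebra $\Env_{\caO}(\cA)$, exactly as the colocalization corollary following Theorem~\ref{theorem:bgrjf} is deduced from Theorem~\ref{theorem:bhtgrfg}. First I would recall the equivalence $\Mod(\cA)\simeq\Mod(\Env_{\caO}(\cA))$ over $\caC$, i.e., compatible with the forgetful functors to $\caC$: under it the free $\cA$-module and free $\Env_{\caO}(\cA)$-module functors correspond, so the set $\Env_{\caO}(\cA)\otimes^{\bL}\caS$ is precisely the image $\bL F(\caS)$ of $\caS$ under the free functor $\caC\to\Mod(\Env_{\caO}(\cA))$, and the hypothesis that $\Mod(\cA)$ has a good $\Env_{\caO}(\cA)\otimes^{\bL}\caS$-localization says exactly that $\Mod(\Env_{\caO}(\cA))$ does. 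Likewise $\caM\to\caM'$ becomes an $\Env_{\caO}(\cA)\otimes^{\bL}\caS$-localization in $\Mod(\Env_{\caO}(\cA))$ on which $U$ computes the same underlying object.

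Next I would verify that $\Env_{\caO}(\cA)$ is a strongly admissible monoid. Since $(\caO,\cA)$ is cofibrant in $\Pairs(\caC)$, Lemma~\ref{lemma:gdjfdf} shows that $\caO$ is cofibrant in $\Oper(\caC)$ and $\cA$ is cofibrant in $\Alg_{\caO}(\caC)$; as $\caC$ is simplicial symmetric monoidal its unit is cofibrant by Remark~\ref{rem:cofibrant_unit}, so $\Env_{\caO}(\cA)\cong\caO_{\cA}(1)$ is underlying cofibrant in $\caC$ by \cite[Corollary 6.6]{Spi} (just as in the proof of Theorem~\ref{theorem:bhgzhj}). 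Taking $\cA'=\Env_{\caO}(\cA)$ and $\varphi=\mathrm{id}$, condition~(i) of Definition~\ref{def:strongly_admissible} is met, so $\Env_{\caO}(\cA)$ is strongly admissible; in particular, by Lemma~\ref{lemma:gtrjg} the forgetful functor $\Mod(\Env_{\caO}(\cA))\to\caC$ is a left Quillen functor, hence preserves homotopy colimits.

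Finally, Theorem~\ref{thm:loc_modules} applies to $(\caO,\cA)$ and shows that $\Env_{\caO}(\cA)\otimes^{\bL}-$ preserves $\caS$-local equivalences. Thus all hypotheses of Theorem~\ref{thm:fun_loc_mon} hold for the monoid $\Env_{\caO}(\cA)$: it is strongly admissible, $\Env_{\caO}(\cA)\otimes^{\bL}-$ preserves $\caS$-equivalences, and $\Mod(\Env_{\caO}(\cA))$ has a good $\Env_{\caO}(\cA)\otimes^{\bL}\caS$-localization. Applying that theorem to $\caM\to\caM'$ gives that $U(\caM)\to U(\caM')$ is an $\caS$-localization in $\caC$, which is the claim. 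The only genuinely delicate step is the middle one: one must be sure that cofibrancy of the pair $(\caO,\cA)$ really forces the enveloping algebra to be underlying cofibrant and that $\Mod(\Env_{\caO}(\cA))$ carries the honest transferred model structure, so that invoking ``strongly admissible'' — and hence Lemma~\ref{lemma:gtrjg} and Theorem~\ref{thm:fun_loc_mon} — is legitimate; this rests on the (semi-)model-categorical results of \cite{Spi} together with the cofibrancy of the unit supplied by the simplicial hypothesis. Everything else is the formal transfer along $\Mod(\cA)\simeq\Mod(\Env_{\caO}(\cA))$.
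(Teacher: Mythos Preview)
Your proof is correct and follows exactly the paper's approach: invoke Theorem~\ref{thm:loc_modules} to see that $\Env_{\caO}(\cA)\otimes^{\bL}-$ preserves $\caS$-equivalences, then apply Theorem~\ref{thm:fun_loc_mon} to the monoid $\Env_{\caO}(\cA)$. The paper's proof is two lines and leaves the strong admissibility of $\Env_{\caO}(\cA)$ implicit; you have spelled out this verification (via Lemma~\ref{lemma:gdjfdf}, Remark~\ref{rem:cofibrant_unit}, and \cite[Corollary~6.6]{Spi}) as well as the identification $\Mod(\cA)\simeq\Mod(\Env_{\caO}(\cA))$, which is helpful detail rather than a different argument.
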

\begin{proof}
Theorem~\ref{thm:loc_modules} shows $\Env_{\caO}(\cA)\otimes^{\bL}-$ preserves $\caS$-local equivalences. 
The result follows by applying Theorem~\ref{thm:fun_loc_mon} to the monoid $\Env_{\caO}(\cA)$.
\end{proof}

\appendix{
\section{Preliminaries on model categories}

If $\caC$ is a \emph{cofibrantly generated model category} with set of generating  cofibrations $I$ and set of generating trivial cofibrations $J$, 
we implicitly assume the (co)domains of the elements of $I$ are small relative to the $I$-cellular maps
and that the (co)domains of the elements of $J$ are small relative to the $J$-cellular maps. 
This condition is satisfied if $\caC$ is a \emph{combinatorial model category}; 
that is, 
$\caC$ is cofibrantly generated and locally presentable, since in this case every object is $\lambda$-small for some cardinal $\lambda$.
Let $\sSets$ denote the category of simplicial sets.
Recall that for a simplicial symmetric monoidal model category $\caC$ there exists a monoidal Quillen adjunction $i\colon\sSets\rightleftarrows \caC\colon r$. 
Any such  $\caC$ is canonically enriched and (co)tensored over $\sSets$.
The tensor, enrichment, and cotensor are defined by the functors $i(-)\otimes -$, $r(\Hom(-,-))$, and $\Hom(i(-), -)$, respectively, 
where $\Hom(-,-)$ denotes the internal hom in $\caC$.
These three functors form a Quillen adjunction of two variables.

\subsection{The Reedy model structure on simplicial objects}
\label{hegvfkehvck}
Let $\caC$ be a model category. 
The \emph{simplicial objects in $\caC$} is the category $s\caC$ of $\Delta^{\mathrm{op}}$-diagrams in $\caC$, 
where $\Delta$ denotes the simplicial category. 
In its Reedy model structure \cite[15.3]{Hirschhorn} the weak equivalences are the levelwise weak equivalences, 
while the cofibrations and fibrations are defined by means of latching and matching objects, respectively.

Let $\caC$ be a simplicial model category.
The \emph{realization} $|X_{\bullet}|_\caC$ of a simplicial object $X_{\bullet}\colon\Delta^{\mathrm{op}}\to \caC$ is the coequalizer of the diagram
$$
\xymatrix{
\coprod_{[m]\to [n]} \Delta[m]\otimes X_n\ar@<-0.2ex>[r]  \ar@<1ex>[r] & \coprod_{[n]} \Delta[n]\otimes X_n
}
$$
induced by $X_n\to X_m$ and $\Delta[m]\to \Delta[n]$, respectively, for each map $[m]\to [n]$ in~$\Delta$. 
Using coend notation, as in \cite[18.3.2]{Hirschhorn} and \cite[IX.6]{MacLane}, this can be recast as
$$
|X_{\bullet}|_{\caC}= \int^{[n]\in\Delta} \Delta[n]\otimes X_n=\Delta \otimes_{{\Delta}^{\mathrm{op}}}X_{\bullet}.
$$
If the category is clear from the context we write $|X_\bullet|$ instead of $|X_\bullet|_\caC$.

\begin{lemma} 
\label{lemma:ngfdv}
Let $\caC$ be a simplicial model category and $X_\bullet$ a Reedy cofibrant simplicial object in $\caC$. 
Then the Bousfield--Kan map
$$
\hocolim_{\Delta^{\mathrm{op}}}X_{\bullet}
=
N(-\downarrow \Delta^{\mathrm{op}})^{\mathrm{op}}\otimes_{\Delta^{\mathrm{op}}}X_{\bullet}
\longrightarrow 
\Delta\otimes_{\Delta^{\mathrm{op}}}X_{\bullet}=|X_\bullet|
$$
is a weak equivalence.
\end{lemma}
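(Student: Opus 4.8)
The plan is to identify the Bousfield--Kan map with the canonical comparison between two bifunctor-weighted colimits and then verify the hypotheses of a standard Quillen-adjunction-of-two-variables argument. Recall that both $\hocolim_{\Delta^{\mathrm{op}}} X_\bullet$ and $|X_\bullet|$ are coends of the form $W \otimes_{\Delta^{\mathrm{op}}} X_\bullet$ for a $\Delta$-indexed (resp.\ $\Delta^{\mathrm{op}}$-indexed) simplicial-set-valued ``weight'': for the homotopy colimit the weight is the Bousfield--Kan cosimplicial simplicial set $[n] \mapsto N(\Delta^{\mathrm{op}} \downarrow [n])^{\mathrm{op}} = N([n] \downarrow \Delta)$, and for the realization it is the Yoneda weight $[n] \mapsto \Delta[n]$. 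The Bousfield--Kan map is induced by the natural transformation of weights $N([n] \downarrow \Delta) \to \Delta[n]$ sending a string $[n] \to [k_0] \to \cdots \to [k_m]$ to the image of $[n]\to[k_0]$ under the last-vertex type map; equivalently this is the levelwise weak equivalence $N([n]\downarrow\Delta) \xrightarrow{\simeq} \Delta[n]$ exhibiting $\Delta[n]$ as a simplicial set together with a cofibrant replacement of the point in the relevant sense.

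First I would recall that the functor $(-) \otimes_{\Delta^{\mathrm{op}}} (-)\colon \mathrm{Fun}(\Delta,\sSets) \times s\caC \to \caC$ (using the simplicial tensoring of $\caC$) is a Quillen bifunctor when $\mathrm{Fun}(\Delta,\sSets)$ carries the Reedy model structure and $s\caC$ carries the Reedy model structure; this is a formal consequence of the fact that $i(-)\otimes -\colon \sSets \times \caC \to \caC$ is a Quillen bifunctor (stated in Appendix~A) together with the pushout-product characterization of Reedy cofibrations. Second, I would observe that the Bousfield--Kan weight $[n]\mapsto N([n]\downarrow\Delta)$ is Reedy cofibrant in $\mathrm{Fun}(\Delta,\sSets)$ (its latching maps are the inclusions of boundary-type subcomplexes, a standard computation, cf.\ \cite[\S18.7]{Hirschhorn}), and that the Yoneda weight $[n]\mapsto\Delta[n]$ is also Reedy cofibrant. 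Third, since $X_\bullet$ is Reedy cofibrant by hypothesis, the Quillen bifunctor property shows that $W \mapsto W \otimes_{\Delta^{\mathrm{op}}} X_\bullet$ is a left Quillen functor on $\mathrm{Fun}(\Delta,\sSets)$, hence sends the weak equivalence of Reedy-cofibrant weights $N([-]\downarrow\Delta) \xrightarrow{\simeq} \Delta[-]$ to a weak equivalence in $\caC$. That weak equivalence is precisely the Bousfield--Kan map, finishing the proof.

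The main obstacle I anticipate is the verification that the natural map of weights $N([n]\downarrow\Delta)\to\Delta[n]$ is indeed a levelwise weak equivalence of simplicial sets and that both weights are Reedy cofibrant; this is where the combinatorics of the over-category nerves enters. One clean way around the latching-object bookkeeping is to invoke the standard fact (e.g.\ \cite[Theorem~18.7.4, Corollary~14.8.8]{Hirschhorn}) that $N([n]\downarrow\Delta)$ is the ``cofibrant resolution'' weight computing $\hocolim$, so the identification of the Bousfield--Kan map as a left-Quillen image of a weak equivalence between cofibrant objects is already essentially in the literature; the role of the hypothesis that $X_\bullet$ is Reedy cofibrant is exactly to make $(-)\otimes_{\Delta^{\mathrm{op}}} X_\bullet$ homotopically well-behaved. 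I would therefore keep the written proof short: cite the Quillen bifunctor property of the simplicial tensor, cite Reedy cofibrancy of the two weights, and conclude by Ken Brown's lemma.
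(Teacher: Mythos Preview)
Your argument is correct and is precisely the standard proof of this fact; indeed you yourself point to \cite[Theorem~18.7.4]{Hirschhorn} at the end. The paper's proof consists of nothing more than that single citation, so your proposal is not a different approach but rather an unpacking of the reference the authors invoke.
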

\begin{proof}
See \cite[Theorem 18.7.4]{Hirschhorn}.
\end{proof}

The category $s^2\caC$ of \emph{bisimplicial objects in $\caC$} is the category of simplicial objects in $s\caC$. 
There is an obvious diagonal functor ${\rm diag}\colon s^2\caC\to s\caC$ defined by ${\rm diag}(X_{\bullet,\bullet})_n= X_{n, n}$.
\begin{lemma}
\label{lemma:dfhfghn} 
Let $X_{\bullet, \bullet}$ be a bisimplicial object in a simplicial model category $\caC$. 
Then there is a natural isomorphism 
$$
\int^{[n],[m] \in \Delta\times\Delta} (\Delta[n] \times \Delta[m]) \otimes X_{n,m} 
\cong
\int^{[n] \in \Delta} \Delta[n] \otimes X_{n,n}.
$$
\end{lemma}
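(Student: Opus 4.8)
The plan is to identify both sides with a single coend over the product category $\Delta \times \Delta$, using the Fubini theorem for coends together with the universal property of the diagonal. First I would rewrite the left-hand side by noting that $\Delta[n] \times \Delta[m] \cong \mathrm{diag}(\Delta[n] \boxtimes \Delta[m])$, where $\boxtimes$ denotes the external product of simplicial sets and $\mathrm{diag}$ the diagonal; more to the point, I would invoke the standard density argument: the functor $([n],[m]) \mapsto \Delta[n] \times \Delta[m]$ is, up to the tensoring $- \otimes X_{n,m}$, exactly what computes $|{\rm diag}(X_{\bullet,\bullet})|_\caC$. So the cleanest route is to show directly that the bisimplicial realization $\int^{[n],[m]} (\Delta[n]\times\Delta[m]) \otimes X_{n,m}$ is naturally isomorphic to the realization of the diagonal simplicial object ${\rm diag}(X_{\bullet,\bullet})_n = X_{n,n}$, which is precisely the right-hand side.

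The key steps, in order, would be: (1) Recall that for simplicial sets the analogous statement holds — ${\rm diag}(Y_{\bullet,\bullet}) \cong \int^{[n],[m]} (\Delta[n]\times\Delta[m]) \times Y_{n,m}$ naturally — which is a consequence of the fact that $\Delta[n] \times \Delta[m] = \int^{[p]} \Delta[p] \times (\Delta\times\Delta)([p],([n],[m]))$ combined with the Eilenberg–Zilber/density observation that $\mathrm{diag}^* $ has the appropriate adjoints. (2) Since $\caC$ is tensored over $\sSets$ via $i(-)\otimes -$ and this tensoring preserves colimits in the simplicial-set variable, transport the simplicial-set identity through $- \otimes (-)$ to the $\caC$-valued setting; concretely, both coends are computed as colimits in $\caC$ and the tensoring commutes with them. (3) Conclude the natural isomorphism by matching the two colimit presentations: the Fubini theorem for coends lets us iterate $\int^{[n],[m]}$ as $\int^{[n]}\int^{[m]}$, perform the inner coend first (yielding $\Delta[n]\otimes X_{n,n}$ after the diagonal collapse via the Yoneda reduction $\int^{[m]} \Delta[m]\otimes X_{n,m}$ being replaced appropriately), and arrive at $\int^{[n]} \Delta[n]\otimes X_{n,n}$.

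The main obstacle I anticipate is making precise the collapse of the double coend to the diagonal — i.e., rigorously justifying that $\int^{[n],[m]} (\Delta[n]\times\Delta[m])\otimes X_{n,m} \cong \int^{[n]} \Delta[n]\otimes X_{n,n}$ rather than merely asserting it by analogy with simplicial sets. The honest way to handle this is to observe that restriction along the diagonal $d\colon \Delta \to \Delta\times\Delta$ has a left adjoint (left Kan extension $d_!$), and that $d_!$ applied to the co-Yoneda functor $\Delta[-]$ on $\Delta$ recovers $([n],[m])\mapsto \Delta[n]\times\Delta[m]$ only after a further identification; alternatively, and more simply, one uses the classical fact (e.g.\ as in \cite[IX.6]{MacLane} or the coend calculus of \cite[18.3]{Hirschhorn}) that for any functor $T\colon \Delta^{\mathrm{op}}\times\Delta^{\mathrm{op}} \to \caC$ one has $\int^{[n],[m]} (\Delta[n]\times\Delta[m])\otimes T_{n,m} \cong \int^{[n]} \Delta[n]\otimes T_{n,n}$, which is the coend-theoretic incarnation of the bisimplicial Eilenberg–Zilber phenomenon and follows from a two-step Fubini computation plus the co-Yoneda lemma. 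Since all functors in sight ($- \otimes X_{n,m}$, geometric realization, the diagonal) preserve the relevant colimits, naturality in $X_{\bullet,\bullet}$ is automatic once the isomorphism is constructed at this level of generality.
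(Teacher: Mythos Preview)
Your proposal is correct and, once you strip away the hedging, lands on precisely the paper's argument: the functor $([n],[m])\mapsto \Delta[n]\times\Delta[m]$ is the left Kan extension of the Yoneda embedding $\Delta[-]\colon \Delta\to\sSets$ along the diagonal $d\colon \Delta\to\Delta\times\Delta$, and from this the coend identity follows by the general change-of-weight formula for Kan extensions.

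Two remarks. First, your caveat ``only after a further identification'' is unwarranted: the computation
\[
(d_!\,\Delta[-])([n],[m]) \;=\; \int^{[p]\in\Delta} \Delta[n]_p \times \Delta[m]_p \cdot \Delta[p] \;\cong\; \Delta[n]\times\Delta[m]
\]
is an immediate instance of the co-Yoneda lemma (density of $\Delta[-]$ in $\sSets$), with no further work needed. The paper states this in one line. Second, the Fubini route you sketch in step~(3) does not work as written: the inner coend $\int^{[m]}\Delta[m]\otimes X_{n,m}$ does not collapse to $X_{n,n}$ (there is no representable in the $[m]$-variable to absorb), so ``diagonal collapse via Yoneda reduction'' is not a valid step here. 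You correctly flag this as the obstacle and then supply the Kan-extension argument, which is the right fix---but that argument \emph{is} the whole proof, so the preceding steps (1)--(3) can be dropped.
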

\begin{proof}
The left Kan extension of the Yoneda functor $\Delta\to \sSets$ along the diagonal $\Delta\to\Delta\times\Delta$ is the functor $\Delta\times\Delta\to \sSets$ that sends $([n], [m])$ to 
$\Delta[n]\times\Delta[m]$. 
Hence the coends $\Delta\otimes_{\Delta^{\mathrm{op}}} {\rm diag}(X_{\bullet,\bullet})$ and $(\Delta\times\Delta)_{\Delta^{\mathrm{op}}\times\Delta^{\mathrm{op}}}X_{\bullet,\bullet}$ are 
isomorphic.
\end{proof}

If $\caC$ has a symmetric monoidal structure, there is a symmetric monoidal tensor product in $s\caC$ defined by the objectwise tensor product,
i.e., 
$(X_{\bullet}\otimes Y_{\bullet})_n=X_n\otimes Y_n$.

\begin{lemma}
\label{lemma:fgngd} 
\label{lemma:dfgfg} 
Let $\caC$ be a symmetric monoidal model category. 
\begin{itemize}
\item[{\rm (i)}] Then $s\caC$ is a symmetric monoidal model category for the Reedy model structure. 
\item[{\rm (ii)}] If $\caC$ is simplicial the realization functor is symmetric monoidal.
\end{itemize}
\end{lemma}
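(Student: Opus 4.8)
The plan is, for part~(i), to verify the pushout-product axiom and the unit axiom for the Reedy model structure on $s\caC$ equipped with the levelwise tensor product; the underlying closed symmetric monoidal structure, with unit the constant object $c(I)$, is levelwise and was already recorded before the lemma (closedness on $s\caC$ coming from that on $\caC$). I will assume $\caC$ cofibrantly generated, which covers all uses of the lemma. For the pushout-product axiom I would use the standard description of the Reedy model structure on $s\caC=\caC^{\Delta^{\mathrm{op}}}$ as cofibrantly generated, with generating (trivial) cofibrations the corner maps $\iota_n\,\hat\otimes\,j$, where $\iota_n\colon\partial\Delta[n]\hookrightarrow\Delta[n]$ runs over the boundary inclusions, $j$ over the generating (trivial) cofibrations of $\caC$, and $\hat\otimes$ is formed for the levelwise copowering of $s\caC$ over $\sSets$ (levelwise coproducts in $\caC$); see \cite{Hirschhorn}. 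By the usual reduction it then suffices to compute corner products of such generators. The key point is that this copowering and the levelwise tensor are compatible, $(K\otimes X)\otimes(L\otimes Y)\cong(K\times L)\otimes(X\otimes Y)$ --- immediate from $\otimes$ on $\caC$ preserving colimits in each variable --- which, together with associativity of corner products, yields $(\iota_n\,\hat\otimes\,j)\,\square\,(\iota_m\,\hat\otimes\,j')\cong(\iota_n\,\square\,\iota_m)\,\hat\otimes\,(j\,\square\,j')$. Here $\iota_n\,\square\,\iota_m$ is a monomorphism of simplicial sets and $j\,\square\,j'$ a cofibration of $\caC$, trivial as soon as $j$ or $j'$ is (pushout-product axioms for $\sSets$ and $\caC$, and Ken Brown's lemma); since corner products of relative cell complexes are again relative cell complexes, the result is a Reedy cofibration, trivial in the appropriate case.

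For the unit axiom: $\Delta^{\mathrm{op}}$ has cofibrant constants \cite{Hirschhorn}, so $c$ carries cofibrant objects of $\caC$ to Reedy cofibrant ones, and a cofibrant replacement $QI\to I$ in $\caC$ yields a cofibrant replacement $c(QI)\to c(I)$ of the monoidal unit; for $X$ Reedy cofibrant each $X_n$ is cofibrant in $\caC$, so $QI\otimes X_n\to I\otimes X_n=X_n$ is a weak equivalence, whence $c(QI)\otimes X\to X$ is a levelwise weak equivalence (and there is nothing to prove when $\caC$ has cofibrant unit). For a general symmetric monoidal model category $\caC$ one would instead factor the levelwise tensor as $s\caC\times s\caC\xrightarrow{\ \boxtimes\ }s^2\caC\xrightarrow{\ \mathrm{diag}\ }s\caC$ with $(X\boxtimes Y)_{p,q}=X_p\otimes Y_q$: here $\boxtimes$ is a Quillen bifunctor because the relative latching map of $X\boxtimes Y$ is the corner product of those of $X$ and $Y$, and one is reduced to the fact that $\mathrm{diag}$ is left Quillen for the Reedy structures.

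For part~(ii), the realization functor $|{-}|_\caC\colon s\caC\to\caC$ is made symmetric monoidal by a coend computation. As $\otimes$ on $\caC$ is closed it commutes with the defining coends, so
\[ |X_\bullet|_\caC\otimes|Y_\bullet|_\caC\;\cong\;\int^{[n],[m]\in\Delta}(\Delta[n]\otimes X_n)\otimes(\Delta[m]\otimes Y_m). \]
Writing $K\otimes A=i(K)\otimes A$ for the simplicial tensoring and using that the left adjoint $i\colon\sSets\to\caC$ is strong symmetric monoidal, the integrand is isomorphic to $(\Delta[n]\times\Delta[m])\otimes(X_n\otimes Y_m)$, so Lemma~\ref{lemma:dfhfghn}, applied to the bisimplicial object $([n],[m])\mapsto X_n\otimes Y_m$, gives
\[ |X_\bullet|_\caC\otimes|Y_\bullet|_\caC\;\cong\;\int^{[n]\in\Delta}\Delta[n]\otimes(X_n\otimes Y_n)\;=\;|X_\bullet\otimes Y_\bullet|_\caC, \]
naturally in $X_\bullet,Y_\bullet$. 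Compatibility with the associativity and symmetry isomorphisms is inherited from those of $\caC$, $\sSets$ and Lemma~\ref{lemma:dfhfghn}; for the unit one computes $|c(I)|_\caC\cong\bigl(\int^{[n]\in\Delta}\Delta[n]\bigr)\otimes I\cong\Delta[0]\otimes I\cong I$, the middle isomorphism being the density (co-Yoneda) formula.

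The main obstacle is entirely in part~(i): the levelwise tensor is badly behaved with respect to latching objects --- already for simplicial sets $L_n(X\times Y)$ is strictly smaller than the corner product of $L_nX\to X_n$ with $L_nY\to Y_n$, since an $n$-simplex of $X\times Y$ is degenerate only when its two coordinates are simultaneously degenerate --- so the pushout-product axiom cannot be checked levelwise, and one is forced into the detour through the cofibrantly generated description above (or, in the general case, through $\mathrm{diag}$, where showing that $\mathrm{diag}$ preserves Reedy cofibrations is the delicate input). Part~(ii), by contrast, is routine bookkeeping once Lemma~\ref{lemma:dfhfghn} is available.
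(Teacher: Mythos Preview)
Your argument for part~(ii) is essentially the paper's own: the same coend manipulation, reducing to Lemma~\ref{lemma:dfhfghn} applied to the bisimplicial object $([n],[m])\mapsto X_n\otimes Y_m$.

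For part~(i) the paper simply cites \cite[Theorem~3.51 and Example~3.52]{Barwick}, whereas you supply a direct proof under the additional hypothesis that $\caC$ is cofibrantly generated (which, as you observe, covers every use of the lemma in the paper). Your argument is correct: the description of the generating Reedy (trivial) cofibrations of $s\caC$ as the corner maps $\iota_n\,\hat\otimes\,j$ is standard, the interchange $(K\,\hat\otimes\,X)\otimes(L\,\hat\otimes\,Y)\cong(K\times L)\,\hat\otimes\,(X\otimes Y)$ holds for the reason you give, and associativity of pushout-products then reduces the pushout-product axiom in $s\caC$ to those already known in $\sSets$ and $\caC$. The appeal to Ken Brown is superfluous---the pushout-product axiom in $\caC$ directly yields that $j\,\square\,j'$ is a trivial cofibration when one factor is, and trivial cofibrations in a cofibrantly generated model category are retracts of relative $J$-cell complexes, which is all the final step needs. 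Your sketched route for general $\caC$, factoring the levelwise tensor as $\boxtimes$ followed by $\mathrm{diag}$, is in fact the strategy behind Barwick's result; the delicate point you isolate---that $\mathrm{diag}\colon s^2\caC\to s\caC$ is left Quillen for the Reedy structures---is exactly what his compatibility condition on the Reedy category packages. Your hands-on approach buys transparency and self-containment in the cofibrantly generated case; the citation buys the general statement with no further work.
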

\begin{proof}
The first part is an application of \cite[Theorem 3.51 and Example 3.52]{Barwick}. For the second part, observe that
$$
|X_\bullet| \otimes |Y_\bullet| \cong \left(\int^{[n] \in\Delta} \Delta[n] \otimes X_n \right)
\otimes \left(\int^{[m] \in\Delta} \Delta[m] \otimes Y_m \right)
$$
$$
\cong\int^{([n],[m]) \in \Delta \times \Delta} (\Delta[n] \times \Delta[m]) \otimes X_n \otimes Y_m
\cong \int^{[n] \in \Delta} \Delta[n] \otimes X_n \otimes Y_n,
$$
where the last isomorphism follows by applying Lemma~\ref{lemma:dfhfghn} to the bisimplicial object $(X\otimes Y)_{n,m}=X_n\otimes Y_m$.
\end{proof}

\begin{lemma} 
\label{lemma:nbgfgfb}
Let $\caC$ be a cofibrantly generated model category. 
Then the Reedy model structure on $s\caC$ is cofibrantly generated.
\end{lemma}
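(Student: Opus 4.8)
The plan is to produce explicit generating sets for the Reedy model structure on $s\caC$ out of the generating cofibrations $I$ and generating trivial cofibrations $J$ of $\caC$, and then to invoke the fact --- already available, since the Reedy model structure exists by \cite[15.3]{Hirschhorn} --- that a model structure is cofibrantly generated as soon as one has sets of cofibrations, with small (co)domains, detecting the fibrations and the trivial fibrations by the right lifting property. For each $n\ge 0$ let $F_n\colon\caC\to s\caC$ be the left adjoint of evaluation $\mathrm{ev}_n\colon s\caC\to\caC$, $X_\bullet\mapsto X_n$, so that $F_n(A)_m$ is the coproduct of copies of $A$ indexed by the monotone maps $[m]\to[n]$. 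The $n$-th matching object functor $M_n\colon s\caC\to\caC$ is a finite limit of evaluation functors, since the matching category of $\Delta^{\mathrm{op}}$ at $[n]$ is finite; forming the colimit of the corresponding free functors $F_k$ over the opposite of that matching category yields a functor $\partial F_n\colon\caC\to s\caC$ left adjoint to $M_n$, and the canonical map $X_n\to M_n X$ induces a natural transformation $\partial F_n\to F_n$. For a map $i\colon A\to B$ in $\caC$ let $\widehat F_n(i)$ denote the induced map $F_nA\cup_{\partial F_nA}\partial F_nB\to F_nB$, and set $I_{s\caC}=\{\widehat F_n(i): n\ge 0,\ i\in I\}$ and $J_{s\caC}=\{\widehat F_n(j): n\ge 0,\ j\in J\}$.

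The heart of the argument is a formal adjunction computation: because $F_n$ and $\partial F_n$ are left adjoint to $\mathrm{ev}_n$ and $M_n$, a map $p\colon E\to B$ in $s\caC$ has the right lifting property with respect to $\widehat F_n(i)$ if and only if the relative matching map $E_n\to B_n\times_{M_nB}M_nE$ has the right lifting property with respect to $i$. Hence the maps with the right lifting property against all of $I_{s\caC}$ are exactly those all of whose relative matching maps are $I$-injective, which (as $\caC$ is cofibrantly generated) is precisely the class of Reedy trivial fibrations; likewise $J_{s\caC}$ detects the Reedy fibrations. Since Reedy cofibrations are the maps with the left lifting property against Reedy trivial fibrations, and Reedy trivial cofibrations the maps with the left lifting property against Reedy fibrations, the classes $I_{s\caC}$ and $J_{s\caC}$ generate the cofibrations and trivial cofibrations respectively; in particular each $\widehat F_n(i)$ is a Reedy cofibration and each $\widehat F_n(j)$ a Reedy trivial cofibration.

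It remains to check smallness. Colimits in $s\caC$ are computed levelwise and $F_n$, $\partial F_n$ preserve colimits; one computes directly that at level $m$ the map $\widehat F_n(i)$ is an isomorphism when $m\ne n$ and a pushout of $i$ when $m=n$. Thus the level-$m$ component of any $I_{s\caC}$-cellular map is an $I$-cellular map in $\caC$, so $\Hom_{s\caC}(F_nA,-)\cong\Hom_{\caC}(A,(-)_m)$ carries the relevant transfinite compositions to transfinite compositions of $I$-cellular maps; since $A$ is small relative to $I$-cellular maps in $\caC$, the object $F_nA$ is small relative to $I_{s\caC}$-cellular maps. As $\partial F_nA$ is a finite colimit of objects of the form $F_kA$, and the domain of $\widehat F_n(i)$ is the pushout $F_nA\cup_{\partial F_nA}\partial F_nB$, all (co)domains of maps in $I_{s\caC}$ are small relative to $I_{s\caC}$-cellular maps, and similarly for $J_{s\caC}$. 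Therefore $(I_{s\caC},J_{s\caC})$ exhibits the Reedy model structure on $s\caC$ as cofibrantly generated.

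I expect the main obstacle to be getting the boundary functor $\partial F_n$ exactly right, so that the lifting property against $\widehat F_n(i)$ encodes precisely the relative matching condition and not something weaker such as levelwise $I$-injectivity; once this is in place the remaining points --- the identification of the generating classes and the levelwise smallness bookkeeping --- are routine. One could alternatively bypass the construction altogether by citing the general fact that the Reedy model structure on diagrams over a Reedy category, with values in a cofibrantly generated model category, is again cofibrantly generated.
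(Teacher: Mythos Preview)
Your proposal is correct in outline; the paper's own proof is simply a one-line citation of \cite[Theorem~15.6.27]{Hirschhorn} (noting the smallness hypothesis on the (co)domains), and what you have written is essentially a reconstruction of that theorem's proof specialized to $\Delta^{\mathrm{op}}$. Your final sentence already records this shortcut. So the two approaches agree at the level of content; yours is just the unpacked version.

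One point in the smallness paragraph is not quite right, however. The claim that $\widehat F_n(i)$ is an isomorphism at level $m$ whenever $m\ne n$ fails for $m>n$: at level $m$ the map $\widehat F_n(i)$ is the pushout-product of $i$ with the inclusion of sets $(\partial\Delta^n)_m\hookrightarrow(\Delta^n)_m$, and for $m>n$ the complement consists of the surjections $[m]\twoheadrightarrow[n]$, which is nonempty. For instance $\partial\Delta^0=\emptyset$, so $\widehat F_0(i)$ is the constant simplicial object on $i$, equal to $i$ at every level. What is true---and is all you need---is that at each level $m$ the map $\widehat F_n(i)$ is a coproduct of an identity with copies of $i$, hence an $I$-cell map; from this the conclusion that the levelwise components of $I_{s\caC}$-cellular maps are $I$-cellular still follows, and your smallness argument goes through unchanged. (There is also a typo: the adjunction should read $\Hom_{s\caC}(F_nA,-)\cong\Hom_{\caC}(A,(-)_n)$.)
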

\begin{proof}
Here we make use of smallness of the (co)domains of the sets of generating (trivial) cofibrations, 
see \cite[Theorem 15.6.27]{Hirschhorn}. 
\end{proof}

\subsection{Bousfield (co)localizations}
Let $\caC$ be a simplicial model category, 
$\mathcal{S}$ a set of homotopy classes of maps in $\caC$, 
and $\mathcal{K}$ a set of isomorphism classes of objects of $\Ho(\caC)$. 
The homotopy type of the derived simplicial mapping space  $\map(X,Y)$ can be computed using $\Map(QX, RY)$,  
where $\Map(-,-)$ is the simplicial enrichment.
Here, 
$Q$ and $R$ denote cofibrant and fibrant replacement functors in $\caC$,
respectively.

An object $Z$ in $\Ho(\caC)$ is \emph{$\caS$-local} if for every representative $f\colon A\to B$ of an element of $\caS$, 
the induced map
$$
f^*\colon \map(B, Z)\longrightarrow\map(A, Z)
$$
is an isomorphism in $\Ho(\sSets)$. 
An object $Z$ in $\caC$ is $\caS$-local if its image in $\Ho(\caC)$ is so. 
The class of $\caS$-local objects is closed under homotopy limits.
A map $g\colon X\to Y$ in $\Ho(\caC)$ is an \emph{$\caS$-local equivalence} or simply an \emph{$\caS$-equivalence} if for every $\caS$-local $Z$, the induced map
$$
g^*\colon \map(Y, Z)\longrightarrow \map(X, Z)
$$
is an isomorphism in $\Ho(\sSets)$. 
A map $X\to Y$ in $\caC$ is an $\caS$-local equivalence if its image in $\Ho(\caC)$ is so. 

A map $f\colon X\to Y$ in $\Ho(\caC)$ is a \emph{$\caK$-colocal equivalence} if for any representative $K$ of an element of $\mathcal{K}$, the induced map
$$
f_*\colon \map(K, X)\longrightarrow \map(K, Y)
$$
is an isomorphism in $\Ho(\sSets)$. 
Likewise, 
a map in $\caC$ is a $\caK$-colocal equivalence if its image in $\Ho(\caC)$ is so. 
An object $W$ in $\Ho(\caC)$ is called \emph{$\caK$-colocal} if for every $\caK$\nobreakdash-colocal equivalence $g\colon X\to Y$, 
there is an induced isomorphism 
$$
g_*\colon \map(W, X)\longrightarrow\map(W, Y)
$$
in $\Ho(\sSets)$. 
An object $W$ in $\caC$ is $\caK$-colocal if its image in $\Ho(\caC)$ is so. 
The class of $\caK$-colocal objects is closed under homotopy colimits.

If $X$ is an object of $\caC$, an \emph{$\caS$-localization} is an $\caS$-local equivalence $X\to X'$ for $X'$ $\caS$-local. 
Dually, a \emph{$\caK$-colocalization} is a $\caK$-colocal equivalence $X'\to X$ for $X'$ $\caK$-colocal.

A simplicial symmetric monoidal model category $\caC$ is \emph{tensor-closed} if the class of $\caS$-local equivalences is closed under the derived tensor product. 
Likewise, $\caK$ is \emph{tensor\nobreakdash-closed} if the class of $\caK$-colocal objects is closed under the derived tensor product.

\begin{definition}
Let $\caS$ be a set of maps and $\caK$ be a set of objects in a simplicial model category $\caC$.
\begin{itemize}
\item[{\rm (i)}] $\caC$ has a \emph{good $\caS$-localization} if the left Bousfield localization with respect to $\caS$ exists; that is, if the classes of cofibrations 
in $\caC$ and $\caS$-local equivalences define a model structure on $\caC$. 
This is the \emph{$\caS$-local model structure} denoted by $\caC_{\caS}$.
\item[{\rm (ii)}] $\caC$ has a \emph{good $\caK$-colocalization} if the right Bousfield localization with respect to $\caK$ exists; that is, if the classes of fibrations 
in $\caC$ and $\caK$-colocal equivalences define a model structure on $\caC$, and the $\caK$-colocal objects are generated under homotopy colimits by the objects of $\caK$. 
This is the \emph{$\caK$-colocal model structure} denoted by $\caC^{\caK}$.
\end{itemize}
\end{definition}

The $\caS$-local fibrations are the maps in $\caC$ with the right lifting property with respect of all maps of $\caC$ that are cofibrations and $\caS$-local equivalences, 
Similarly, the $\caK$-colocal cofibrations are the maps in $\caC$ with the left lifting property with respect to all maps of $\caC$ that are fibrations and $\caK$-colocal equivalences.

If $\caC$ has a good $\caC_{\caS}$-localization, then an $\caS$-localization of $X$ is just a fibrant replacement of $X$ in the localized model structure $\caC_{\caS}$ (also called an $\caS$-local replacement). Similarly, if $\caC$ has a good $\caK$-colocalization, then a $\caK$-colocalization is a cofibrant replacement in the colocalized model structure $\caC^{\caK}$.

\begin{theorem}
Let $\caC$ be a cellular or combinatorial simplicial model category.
\begin{itemize}
\item[{\rm (i)}] If $\caC$ is left proper, 
then $\caC$ has a good $\caS$-localization for every set of maps $\caS$.
\item[{\rm (ii)}] If $\caC$ is right proper, 
then $\caC$ has a good $\mathcal{K}$-colocalization for every set of objects $\caK$. 
Moreover, the $\caK$-colocal objects is the smallest class of objects of $\caC$ that contains $\caK$ and is closed under homotopy colimits and weak equivalences.
\end{itemize}
\end{theorem}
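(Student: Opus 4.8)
This is the classical existence theorem for left and right Bousfield localizations, and the plan is to follow the by-now-standard route, treating the cellular and combinatorial hypotheses in parallel and handling the ``moreover'' clause by a retract argument. For part~(i) I would first construct a functorial $\caS$-localization. Replace each chosen representative $f$ of an element of $\caS$ by a cofibration of mapping-cylinder type between cofibrant objects, form the set $J_\caS$ consisting of the generating trivial cofibrations of $\caC$ together with the pushout-products (in the $\sSets$-tensoring) of these cofibrations with the boundary inclusions $\partial\Delta[n]\hookrightarrow\Delta[n]$, and run the small object argument on $J_\caS$ to obtain a factorization $X \to L_\caS X \to \ast$. One then checks that $L_\caS X$ is $\caS$-local and that $X \to L_\caS X$ is an $\caS$-equivalence; left properness enters here, since it is needed to see that $\caS$-equivalences are stable under cobase change along cofibrations. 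The $\caS$-local model structure is the one with the old cofibrations, the $\caS$-equivalences as weak equivalences (these satisfy the two-out-of-three property and are closed under retracts by formal arguments), and the fibrations defined by the right lifting property; its factorizations come from combining the small object argument on the generating cofibrations $I$ with the $J_\caS$-factorization, and the only delicate lifting axiom follows once one knows that a map which is simultaneously an $\caS$-equivalence and has the right lifting property against $\caS$-local trivial cofibrations is already a trivial fibration. The details are carried out in \cite{Hirschhorn} in the cellular case and in \cite{Barwick} in the combinatorial case.

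For part~(ii) I would dualize. Choose cofibrant representatives of the elements of $\caK$, form the set $\Lambda_\caK$ of maps $K\otimes\partial\Delta[n]\hookrightarrow K\otimes\Delta[n]$ together with $\emptyset\to K$, and use the small object argument to factor $\emptyset\to X$ as $\emptyset\to C_\caK X\to X$ with the first map a relative $\Lambda_\caK$-cell complex; one verifies that $C_\caK X$ is $\caK$-colocal and that $C_\caK X\to X$ is a $\caK$-colocal equivalence, with right properness guaranteeing that $\caK$-colocal equivalences are stable under base change along fibrations. The $\caK$-colocal model structure then has the old fibrations, the $\caK$-colocal equivalences as weak equivalences, and cofibrations defined by the left lifting property, with the remaining axioms checked as above. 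For the ``moreover'' clause, let $\mathcal{L}$ denote the smallest class of objects of $\caC$ containing $\caK$ and closed under homotopy colimits and weak equivalences. One inclusion is immediate: every object of $\caK$ is $\caK$-colocal straight from the definitions, and the $\caK$-colocal objects are closed under homotopy colimits and weak equivalences, so $\mathcal{L}$ is contained in the class of $\caK$-colocal objects. For the reverse inclusion, observe that $C_\caK X$ is assembled from $\caK$ by iterated homotopy pushouts and transfinite compositions, hence lies in $\mathcal{L}$, which is moreover automatically closed under retracts; so if $W$ is $\caK$-colocal then the colocalization $C_\caK W\to W$, being a $\caK$-colocal equivalence between $\caK$-colocal objects, exhibits $W$ as a retract of $C_\caK W$ in $\Ho(\caC)$ --- apply $\map(W,-)$ and lift $\mathrm{id}_W$ --- so that $W\in\mathcal{L}$.

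The step I expect to be the genuine obstacle, and essentially the only non-formal one, is showing that the augmenting sets $J_\caS$ and $\Lambda_\caK$ may be taken to be \emph{sets}: that is, that set-many horns suffice to detect $\caS$-locality and that the cells needed to build $\caK$-colocal approximations can be bounded. This is precisely where the two hypotheses on $\caC$ are used in genuinely different ways --- in the cellular case via the cardinality machinery of \cite{Hirschhorn} resting on effective monomorphisms and compactness of cells, and in the combinatorial case via local presentability together with the fact that the $\caS$-local equivalences form an accessible, accessibly embedded full subcategory of the arrow category, so that a single regular cardinal bounds all the relevant data \cite{Barwick}. Once that is in place, the verification of the remaining model-category axioms in both parts is routine.
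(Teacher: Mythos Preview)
Your proposal is correct and follows the standard route; the paper's own proof consists solely of citations to \cite[Theorems 4.1.1, 5.1.1, 5.1.5]{Hirschhorn} and \cite[Theorem 4.7, \S5]{Barwick}, so you are in effect sketching exactly what those references contain. One minor remark on the ``moreover'' clause: your retract argument works, but it is slightly roundabout --- once you know $C_\caK W\to W$ is a $\caK$-colocal equivalence between $\caK$-colocal objects, applying $\map(-,-)$ with source $W$ and then $C_\caK W$ shows it is an isomorphism in $\Ho(\caC)$, hence a weak equivalence, so $W\in\mathcal{L}$ directly from closure under weak equivalences and you need not invoke closure under retracts.
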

\begin{proof}
For $\caC$ cellular see \cite[Theorem 4.1.1]{Hirschhorn} and \cite[Theorem 5.1.1, Theorem 5.1.5]{Hirschhorn}. 
If $\caC$ is combinatorial the result follows from~\cite[Theorem 4.7]{Barwick} and \cite[\S5]{Barwick}.
\end{proof}

If $F\colon \caC\to \caD$ is a left Quillen functor, denote by $\bL F\colon \Ho(\caC)\to \Ho(\caD)$ its \emph{left derived functor}. 
If $U\colon \caD\to \caC$ is a right Quillen functor, denote by $\bR U$ its \emph{right derived functor}.

\begin{lemma} 
\label{lemma:general}
Let $F\colon \caC \rightleftarrows \caD \colon U$ be a simplicial Quillen adjunction,
$\caS$ a set of homotopy classes of maps in $\caC$, and $\caK$ a set of isomorphism classes of objects of $\Ho(\caC)$. 
\begin{itemize}
\item[{\rm (i)}] An object $Z$ in $\caD$ is $\bL F(\caS)$-local if and only if $\bR U(Z)$ is $\caS$-local in $\caC$. 
Moreover, if $g$ is an $\caS$-local equivalence in $\caC$, then $\bL F(g)$ is an $\bL F(\caS)$-local equivalence in $\caD$.
\item[{\rm (ii)}] A map $f$ is an $\bL F(\caK)$-colocal equivalence in $\caD$ if and only if $\bR U(f)$ is a $\caK$-colocal equivalence in $\caC$. 
Moreover, if $W$ is $\caK$-colocal in $\caC$, then $\bL F(W)$ is $\bL F(\caK)$-colocal in $\caD$.
\end{itemize}
\end{lemma}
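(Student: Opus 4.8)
The plan is to deduce everything from the derived form of the adjunction on homotopy function complexes. Since $F\colon\caC\rightleftarrows\caD\colon U$ is a \emph{simplicial} Quillen adjunction, the simplicial enrichment supplies a natural isomorphism $\Map_\caD(FX,Y)\cong\Map_\caC(X,UY)$ of simplicial sets; evaluating on a cofibrant replacement $QX$ (so that $FQX$ is cofibrant in $\caD$) and a fibrant replacement $RY$ (so that $URY$ is fibrant in $\caC$) and passing to $\Ho(\sSets)$, I would first record the natural isomorphism
$$
\map_\caD(\bL F(X),Y)\;\cong\;\map_\caC(X,\bR U(Y)),
$$
valid for all $X$ in $\caC$ and $Y$ in $\caD$. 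This is standard (cf.\ \cite{Hirschhorn}), and it is the only input beyond the definitions of ($\caS$-)local and ($\caK$-)colocal objects and equivalences recalled in the Bousfield (co)localization subsection.

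Granting this, part (i) is a direct unwinding. By definition $Z$ in $\caD$ is $\bL F(\caS)$-local iff for every representative $f\colon A\to B$ of an element of $\caS$ the map $\map_\caD(\bL F(B),Z)\to\map_\caD(\bL F(A),Z)$ is an isomorphism in $\Ho(\sSets)$; by naturality of the displayed isomorphism in the first variable this holds iff $\map_\caC(B,\bR U(Z))\to\map_\caC(A,\bR U(Z))$ is an isomorphism for all such $f$, i.e.\ iff $\bR U(Z)$ is $\caS$-local. For the second assertion, given an $\caS$-local equivalence $g\colon X\to Y$ in $\caC$ and any $\bL F(\caS)$-local $Z$ in $\caD$, I would use the equivalence just proved to see $\bR U(Z)$ is $\caS$-local, hence $\map_\caC(Y,\bR U(Z))\to\map_\caC(X,\bR U(Z))$ is an isomorphism, and then transport back along the displayed isomorphism to conclude $\map_\caD(\bL F(Y),Z)\to\map_\caD(\bL F(X),Z)$ is an isomorphism; as $Z$ was arbitrary, $\bL F(g)$ is an $\bL F(\caS)$-local equivalence.

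Part (ii) is the formal dual, now using naturality in the second variable of the mapping space. A map $f\colon X\to Y$ in $\caD$ is an $\bL F(\caK)$-colocal equivalence iff $\map_\caD(\bL F(K),X)\to\map_\caD(\bL F(K),Y)$ is an isomorphism in $\Ho(\sSets)$ for every representative $K$ of an element of $\caK$, which via the displayed isomorphism is equivalent to $\map_\caC(K,\bR U(X))\to\map_\caC(K,\bR U(Y))$ being an isomorphism for all such $K$, i.e.\ to $\bR U(f)$ being a $\caK$-colocal equivalence. For the moreover clause, given $W$ $\caK$-colocal in $\caC$ and $f\colon X\to Y$ an $\bL F(\caK)$-colocal equivalence in $\caD$, the equivalence just established shows $\bR U(f)$ is a $\caK$-colocal equivalence, so $\map_\caC(W,\bR U(X))\to\map_\caC(W,\bR U(Y))$ is an isomorphism, and transporting along the displayed isomorphism gives that $\map_\caD(\bL F(W),X)\to\map_\caD(\bL F(W),Y)$ is an isomorphism; hence $\bL F(W)$ is $\bL F(\caK)$-colocal.

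There is no serious obstacle here; the one point that needs care is the construction of the derived adjunction isomorphism with honest naturality in \emph{both} variables of the homotopy categories, since it must be applied to the maps $f$ and $g$ above and not only to objects. This is exactly where the simplicial hypothesis enters — it yields a single natural isomorphism rather than a zig-zag — together with the routine bookkeeping that $\bL F$ is computed on cofibrant objects and $\bR U$ on fibrant ones.
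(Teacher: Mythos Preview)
Your proof is correct and is exactly the argument the paper intends: the paper's own proof is the single sentence ``Both statements follow by using derived adjunctions,'' and your write-up simply unpacks that derived mapping-space adjunction $\map_\caD(\bL F(X),Y)\cong\map_\caC(X,\bR U(Y))$ and checks each clause against the definitions. There is no difference in approach, only in level of detail.
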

\begin{proof}
Both statements follow by using derived adjunctions.
\end{proof}

\section{Colored operads} 
\label{jnhgd}
In this appendix we recall the definitions and basic properties of colored operads and their algebras that are used in the paper. 
Throughout, $\mathcal{V}$ denotes a cocomplete closed symmetric monoidal category with tensor product $\otimes$, 
initial object $0$, unit $I$, and internal hom $\Hom_{\mathcal{V}}(-,-)$. 
The elements in the set $C$ are referred to as \emph{colors}.

\begin{definition}
A \emph{$C$\nobreakdash-colored collection} $\mathcal{K}$ in $\mathcal{V}$ consists of a set of objects $\mathcal{K}(c_1,\ldots, c_n;c)$ in $\mathcal{V}$ for each 
$(n+1)$-tuple of colors $(c_1,\ldots, c_n,c)$ equipped with a right action of the symmetric group $\Sigma_n$ given by maps
$$
\alpha^*\colon \mathcal{K}(c_1,\ldots, c_n;c)\longrightarrow
\mathcal{K}(c_{\alpha(1)},\ldots, c_{\alpha(n)};c),
$$
where $\alpha\in\Sigma_n$ (by default, $\Sigma_n$ is the trivial group if $n=0$ or $n=1$).
\end{definition}

A \emph{map} of $C$\nobreakdash-colored collections $\varphi\colon \mathcal{K}\longrightarrow\mathcal{L}$ consists of maps in $\mathcal{V}$
$$
\varphi_{c_1,\ldots,c_n;c}
\colon 
\mathcal{K}(c_1,\ldots,c_n;c)
\longrightarrow
\mathcal{L}(c_1,\ldots, c_n; c),
$$
for $(n+1)$-tuples $(c_1,\ldots, c_n,c)$, $n\ge 0$, that is compatible with the action of $\Sigma_n$. 
We denote by $\Coll_C(\mathcal{V})$ the category of $C$\nobreakdash-colored collections in $\mathcal{V}$.

\begin{definition}
A \emph{$C$\nobreakdash-colored operad} $\mathcal{O}$ in $\mathcal{V}$ is a $C$\nobreakdash-colored collection equipped with unit maps 
$I\longrightarrow \mathcal{O}(c;c)$ for every $c\in C$ and, 
for every $(n+1)$-tuple of colors $(c_1,\ldots, c_n,c)$ and $n$ given tuples
\[
(a_{1,1},\ldots, a_{1,k_1}; c_1),\ldots, (a_{n,1},\ldots, a_{n,k_n};
c_n),
\]
a \emph{composition product} map
$$
\xymatrix{
\mathcal{O}(c_1,\ldots, c_n;c)\otimes \mathcal{O}(a_{1,1},\ldots,
a_{1,k_1};c_1)\otimes\cdots\otimes \mathcal{O}(a_{n,1},\ldots,
a_{n,k_n};c_n)\ar[d]
\\ \mathcal{O}(a_{1,1},\ldots,a_{1,k_1},a_{2,1},\ldots,a_{2,k_2},\ldots,a_{n,1},\ldots,a_{n,k_n};c),
}
$$
that is compatible with the symmetric groups actions and subject to the associativity and unitary isomorphisms, see \cite[\S 2]{Elmendorf-Mandell}.
\end{definition}

A \emph{map of $C$\nobreakdash-colored operads} is a map of the underlying $C$\nobreakdash-colored collections that is compatible with the unit and composition product maps. 

Denote by $\mathcal{V}^C$ the product category of copies of $\mathcal{V}$ indexed by the set of colors $C$; that is, $\mathcal{V}^C=\prod_{c\in C}\mathcal{V}$.
For every object $\mathbf{X}=(X(c))_{c\in C}$ in $\mathcal{V}^C$, 
the \emph{endomorphism colored operad} $\End({\mathbf{X}})$ of $\mathbf{X}$ is the $C$-colored operad defined by
$$
\End(\mathbf{X})(c_1,\ldots, c_n; c)
:=
\Hom_{\mathcal{V}}(X(c_1)\otimes\cdots\otimes X(c_n),X(c)).
$$
Here, $X(c_1)\otimes\cdots\otimes X(c_n)$ is the unit $I$ when $n=0$. 
The composition product is ordinary composition and the $\Sigma_n$-action is defined by permutation of the factors.

\begin{definition}
Let $\mathcal{O}$ be any $C$-colored operad in $\mathcal{V}$. 
An \emph{$\mathcal{O}$-algebra} (or an \emph{algebra over $\mathcal{O}$}) $\cA$ is an object $\mathbf{X}=(X(c))_{c\in C}$ of $\mathcal{V}^C$ together with a map 
$ \mathcal{O}\longrightarrow \End(\mathbf{X}) $ of $C$-colored operads.
\end{definition}

Equivalently, 
since the monoidal category $\mathcal{V}$ is closed, 
an $\mathcal{O}$-algebra is a family of objects $X(c)$ in $\mathcal{V}$ for every $c\in C$ together with maps
$$
\mathcal{O}(c_1,\ldots, c_n; c)\otimes X(c_1)\otimes\cdots \otimes X(c_n)\longrightarrow X(c),
$$
for every $(n+1)$-tuple $(c_1,\ldots,c_n, c)$, that are compatible with the symmetric group action, the unit maps of $\mathcal{O}$, and subject to the usual associativity isomorphisms.

A \emph{map of $\mathcal{O}$-algebras} $\mathbf{f}\colon\cA\longrightarrow\cB$ is comprised of maps $(f_c\colon X(c)\longrightarrow Y(c))_{c\in C}$ of underlying collections inducing a
commutative diagram of $C$-colored collections
$$
\xymatrix{
\mathcal{O}\ar[r] \ar[d] & \End(\mathbf{X}) \ar[d] \\
\End(\mathbf{Y}) \ar[r]& \Hom(\mathbf{X},\mathbf{Y}).
}
$$
The top and left arrows are the given $\mathcal{O}$-algebra structures on $\mathbf{X}$ and $\mathbf{Y}$, respectively.
The $C$-colored collection $\Hom(\mathbf{X},\mathbf{Y})$ is defined as
$$
\Hom(\mathbf{X},\mathbf{Y})(c_1,\ldots, c_n;c)
:=\Hom_{\mathcal{V}}(X(c_1)\otimes\cdots\otimes X(c_n), Y(c)),
$$
and the right and bottom arrows are induced by the maps $f_c$.
If $\mathcal{V}$ has pullbacks, then a map $\mathbf{f}$ of $\mathcal{O}$-algebras can be viewed as a map of $C$-colored operads
$$
\mathcal{O}\longrightarrow \End(\mathbf{f}),
$$
where $\End(\mathbf{f})$ is the pullback of the diagram of $C$-colored collections
\begin{equation}
\xymatrix{
\End(\mathbf{f})\ar@{.>}[r] \ar@{.>}[d] & \End(\mathbf{X}) \ar[d] \\
\End(\mathbf{Y}) \ar[r]& \Hom(\mathbf{X},\mathbf{Y}).
}
\label{end_f}
\end{equation}
Note that $\End(\mathbf{f})$ inherits a $C$-colored operad structure from the $C$-colored operads $\End(\mathbf{X})$ and $\End(\mathbf{Y})$. 
We denote the category of $\mathcal{O}$-algebras by $\Alg_{\mathcal{O}}(\mathcal{V})$.

\begin{definition}
Given a $C$-colored operad $\mathcal{O}$ and an object $\mathbf{X}=(X(c))_{c\in C}$ in $\mathcal{V}^C$, the \emph{restricted endomorphism operad} $\End_{\mathcal{O}}(\mathbf{X})$ 
is defined by
\begin{equation}
\End_{\mathcal{O}}(\mathbf{X})(c_1,\ldots, c_n; c)
:=
\left\{
\begin{array}{ll}
\End(\mathbf{X},\mathbf{Y})(c_1,\ldots, c_n;c) & \mbox{if }\mathcal{O}(c_1,\ldots, c_n;c)\ne 0, \\[0.2cm]
0 & \mbox{otherwise}.
\end{array}
\right.
\label{endp}
\end{equation}
\end{definition}
There is a canonical inclusion of $C$-colored operads $\End_{\mathcal{O}} (\mathbf{X})\longrightarrow \End(\mathbf{X})$, 
and thus every map $\mathcal{O}\longrightarrow \End(\mathbf{X})$ of $C$-colored operad factors uniquely through the restricted endomorphism operad $\End_{\mathcal{O}}(\mathbf{X})$.
Hence an $\mathcal{O}$-algebra structure on $\mathbf{X}$ is given by a map of $C$-colored operads $\mathcal{O}\longrightarrow \End_{\mathcal{O}}(\mathbf{X})$.

If $\alpha\colon C\longrightarrow D$ is a function between sets of colors,
any $D$-colored operad $\mathcal{O}$ pulls back to a $C$-colored operad $\alpha^*\mathcal{O}$ and there is an adjoint functor pair (see~\cite[\S 1.6]{BM07})

\begin{equation}
\label{coloradjoint1}
\xymatrix{
\alpha_{!} : {\Oper}_C(\mathcal{V})\ar@<3pt>[r] & \ar@<3pt>[l] {\Oper}_D(\mathcal{V}) : \alpha^*.
}
\end{equation}
The restriction functor $\alpha^*$ is defined by 
$
(\alpha^* \mathcal{O})(c_1,\ldots,c_n;c)
:=\mathcal{O}(\alpha(c_1),\ldots, \alpha(c_n); \alpha(c))$.
A function $\alpha\colon C\longrightarrow D$ also defines an adjoint pair between the corresponding categories of algebras for every $D$-colored operad 
$\mathcal{O}$ in $\mathcal{V}$, 
i.e., 
\begin{equation}
\label{coloradjoint2}
\xymatrix{ 
\alpha_{!} : {\Alg}_{\alpha^*\mathcal{O}}(\mathcal{V})\ar@<3pt>[r] & \ar@<3pt>[l] {\Alg}_{\mathcal{O}}(\mathcal{V}) : \alpha^*.
}
\end{equation}
If $\cA$ is an $\mathcal{O}$\nobreakdash-algebra with structure map $\gamma\colon \mathcal{O}\longrightarrow \End({\bf X})$, 
then $(\alpha^*{\bf X})(c):=X(\alpha(c))$ for all $c\in C$, with structure map defined by (\ref{coloradjoint1}),
i.e., 
\begin{equation}
\label{alphastar}
\alpha^*\gamma\colon \alpha^*\mathcal{O}
\longrightarrow 
\alpha^*\End({\bf X})=\End(\alpha^*{\bf X}).
\end{equation}

When $C=\{c\}$, 
a $C$-colored operad $\mathcal{O}$ is an operad, where $\mathcal{O}(n)$ is short for $\mathcal{O}(c,\ldots,c;c)$ with $n\ge 0$ inputs. 
The \emph{associative operad} $\Ass$ is the one-color operad with $\Ass(n)=I[\Sigma_n]$ for $n\ge 0$.
Here, $I[\Sigma_n]$ is the coproduct of copies of the unit $I$ indexed by $\Sigma_n$, on which $\Sigma_n$ acts freely by permutations. 
The \emph{commutative operad} $\Com$ is the one-color operad with $\Com(n)=I$ for $n\ge 0$. 
Algebras over $\Ass$ are the associative monoids in $\mathcal{V}$, while algebras over $\Com$ are the commutative monoids in $\mathcal{V}$.

For $\mathcal{O}$ a one\nobreakdash-colored operad in $\mathcal{V}$,
let $\modu_{\mathcal{O}}$ be the $C$-colored operad with colors $C=\{r,m\}$ and nonzero terms $\modu_{\mathcal{O}}(r,\stackrel{(n)}{\ldots}, r;r):=\mathcal{O}(n)$, $n\ge 0$, and 
$\modu_{\mathcal{O}}(c_1,\ldots, c_n;m):=\mathcal{O}(n)$, $n\ge 1$, where exactly one $c_i$ is $m$ and the rest (if any) are equal to $r$. 
An algebra over $\modu_{\mathcal{O}}$ is a pair $(\cR, \cM)$ of objects of $\mathcal{V}$, 
where $\cR$ is an $\mathcal{O}$\nobreakdash-algebra and $\cM$ is a module over~$\cR$.
That is,  
an object equipped with maps
\[
\mathcal{O}(n) \otimes \cR \otimes {\stackrel{(k-1)}{\cdots}} \otimes \cR \otimes \cM \otimes \cR \otimes {\stackrel{(n-k)}{\cdots}}
\otimes \cR \longrightarrow \cM
\]
for $n\ge 1$ and $1\le k\le n$, that are equivariant and compatible with associativity isomorphisms and the unit of $\mathcal{O}$.

When $\mathcal{O}=\Ass$, an algebra over $\modu_{\mathcal{O}}$ is a pair $(\cR,\cM)$ where $\cR$ is a monoid in~$\mathcal{V}$ and $\cM$ is an $\cR$\nobreakdash-bimodule, 
i.e., an object equipped with commuting left and right $\cR$\nobreakdash-actions. 
When $\mathcal{O}=\Com$, then $\cR$ is a commutative monoid in $\mathcal{V}$ and $\cM$ is a module over it (indistinctly left or right).

Let $\alpha$ denote the inclusion of $\{r\}$ into $\{r,m\}$.
Then $\alpha^*\modu_{\mathcal{O}} = \mathcal{O}$ for every operad $\mathcal{O}$, and $\alpha^*(\cR,\cM)=\cR$ for the corresponding algebras.

{\bf Acknowledgments.}
The authors gratefully acknowledge support by the RCN grant no.~239015 "Special Geometries",
RCN Frontier Research Group Project no.~250399 "Motivic Hopf Equations'', 
and the DFG Priority Program 1786 "Homotopy theory and algebraic geometry''.
Guti\'errez is supported by the Spanish Ministry of Economy under the grants MTM2016-76453-C2-2-P (AEI/FEDER, UE) and RYC-2014-15328 (Ram\'on y Cajal Program). {\O}stv{\ae}r is supported by a Friedrich Wilhelm Bessel Research Award from the Humboldt Foundation and a Nelder Visiting Fellowship from Imperial College London.

\def\cprime{$'$}

\vspace{0.1in}

\begin{center}
Departament de Matem\`atiques i Inform\`atica, Universitat de Barcelona, Spain.\\
e-mail: javier.gutierrez@ub.edu
\end{center}
\begin{center}
Mathematisches Institut, Universit\"at Osnabr\"uck, Germany.\\
e-mail: oroendig@uni-osnabrueck.de
\end{center}
\begin{center}
Mathematisches Institut, Universit\"at Osnabr\"uck, Germany.\\
e-mail: markus.spitzweck@uni-osnabrueck.de
\end{center}
\begin{center}
Department of Mathematics, University of Oslo, Norway.\\
e-mail: paularne@math.uio.no
\end{center}
\end{document}